\theoremstyle{plain}
\newtheorem{thm}{Theorem}[section]
\newtheorem{lem}[thm]{Lemma}
\newtheorem{prop}[thm]{Proposition}
\newtheorem{cor}[thm]{Corollary}
\newtheorem{rem}[thm]{Remark}
\theoremstyle{definition}
\newtheorem{exmp}[thm]{Example}
\newcommand{\Rmnum}[1]{\expandafter\@slowromancap\romannumeral #1@}
\newcommand{\la}{\lambda}
\numberwithin{equation}{section} \errorcontextlines=0
\begin{document}
\title[A multiparametric Murnaghan-Nakayama rule for Macdonald polynomials]{A multiparametric Murnaghan-Nakayama rule for Macdonald polynomials}
\author{Naihuan Jing, Ning Liu}
\address{Department of Mathematics, North Carolina State University, Raleigh, NC 27695, USA}
\email{jing@ncsu.edu}
\address{Fakult\"at f\"ur Mathematik, Universit\"at Wien, Oskar-Morgenstern-Platz 1, A-1090 Vienna, Austria}
\email{mathliu123@outlook.com}
\subjclass[2020]{Primary: 05E05, 05E10; Secondary: 17B69, 20C08, 15A66}\keywords{Macdonald polynomials, $\lambda$-rings, Murnaghan-Nakayama rule, $(q,t)$-Green polynomials, $(q,t)$-Kostka polynomials, $(q, t)$-binomial coefficients} 

\maketitle

\begin{abstract}
We introduce a new family of operators as multi-parameter deformation of the one-row Macdonald polynomials. The matrix coefficients of these operators acting on the space of symmetric
functions with rational coefficients in two parameters $q,t$ (denoted by $\Lambda[q,t]$) are computed by assigning some values to skew Macdonald polynomials in $\la$-ring notation. The new rule is utilized to provide new iterative formulas
and also recover various existing formulas in a unified manner.
Specifically the following applications are discussed: (i) A $(q,t)$-Murnaghan-Nakayama rule for Macdonald functions is given as a generalization of the $q$-Murnaghan-Nakayama rule; (ii) An iterative formula for the $(q,t)$-Green polynomial is deduced; (iii) A simple proof of the Murnaghan-Nakayama rule for the Hecke algebra and the Hecke-Clifford algebra is offered; (iv) A combinatorial inversion of the Pieri rule for Hall-Littlewood functions is derived with the help of the vertex operator realization of the Hall-Littlewood functions; (v) Two iterative formulae for the $(q,t)$-Kostka polynomials $K_{\lambda\mu}(q,t)$ are obtained from the dual version of our multiparametric Murnaghan-Nakayama rule, one of which yields an explicit formula for arbitrary $\lambda$ and $\mu$ in terms of the generalized $(q, t)$-binomial coefficient introduced independently by Lassalle and Okounkov.
\end{abstract}
\tableofcontents

\section{Introduction}
The Murnaghan-Nakayama rule \cite{Mur,Nak1,Nak2} is a combinatorial algorithm for calculating irreducible characters of the symmetric groups. A formula for irreducible characters of the Iwahori-Hecke algebra of type A (called $q$-Murnaghan-Nakayama rule) is known \cite{Ram} (also see \cite{KW, Van}) as a $q$-analog of the classical Murnaghan-Nakayama rule. The authors formulated a determinant-type Murnaghan-Nakayama rule \cite{JL2}  for the Hecke algebras using vertex operators, which provided
an alternating proof of the $q$-Murnaghan-Nakayama rule.

As a $q$-deformation of the Sergeev algebra $\mathfrak{H}^c_n$, the Hecke-Clifford algebra $\mathcal{H}^c_n$ was defined by Olshanski \cite{O} as a semidirect product of the Hecke algebra $\mathcal H_n$ by the Clifford algebra $Cl_n$. It is the associative superalgebra over the field $\mathbb{C}(t^{\frac{1}{2}})$ with even generators $T_1,\cdots, T_{n-1}$ and odd generators $c_1,\cdots, c_n$ subject to the relations:
\begin{align*}
(T_i-t)(T_i+1)=0, \quad &1\leq i\leq n-1,\\
T_iT_j=T_jT_i, \quad &1\leq i,j\leq n-1, \mid i-j\mid>1,\\
T_iT_{i+1}T_i=T_{i+1}T_iT_{i+1}, \quad &1\leq i\leq n-2,\\
c^2_i=1, c_ic_j=-c_jc_i, \quad &1\leq i\neq j\leq n,\\
T_ic_j=c_jT_i, \quad &j\neq i, i+1, 1\leq i\leq n-1, 1\leq j\leq n,\\
T_ic_i=c_{i+1}T_i, \quad &1\leq i\leq n-1,
\end{align*}
where the first three relations define the Hecke algebra $\mathcal{H}_n$. In \cite{WW}, Wan and Wang used the super-Schur-Weyl duality to derive a Frobenius character formula for the Hecke-Clifford algebra in terms of spin Hall-Littlewood functions. Recently, the authors \cite{JL3} used the same technique of \cite{JL2} to derive a Pfaffian-type Murnaghan-Nakayama rule for the Hecke-Clifford algebras, which yields the combinatorial Murnaghan-Nakayama rule for the Hecke-Clifford algebras.

Macdonald symmetric functions $Q_{\la}(X;q,t)$ include several classes of classical symmetric functions as special cases such as Jack functions $Q_{\la}(X;\alpha)$, Hall-Littlewood functions $Q_{\la}(X;0,t)$, Schur functions $Q_{\la}(X;q=t)$ and Schur $Q$-functions $Q_{\la}(X;0,-1)$, etc. The aim of this paper is to give a Murnaghan-Nakayama rule for the Macdonald polynomials, which will specialize to the two formulas above for the Hecke algebras and the Hecke-Clifford algebras. To do this, we introduce a new family of symmetric functions and obtain a multi-parameter Murnaghan-Nakayama rule for the Macdonald polynomials in $\la$-ring notation. Based on our formulae, an iterative formula for the $(q,t)$-Green polynomials is derived,
and an inversion of the Pieri rule for Hall-Littlewood functions is deduced combining with the vertex operator realization of the Hall-Littlewood functions.

One of the most important problems in algebraic combinatorics is to find the combinatorial expansion of the integral Macdonald polynomials $J_{\mu}(X; q, t)$ in terms of the modified Schur functions $S_{\la}(X; t)$. Much efforts have been devoted in searching of positive combinatorial
formulas for $(q, t)$-Kostka polynomials $K_{\la\mu}(q, t)$ \cite{A, B, HHL, Ste, Z}, and recently a direct formula of $K_{\la\mu}(q, t)$ was found
by Gaballi and Wheeler \cite{GW} using colored paths on a square lattice with boundary conditions. Last but not least, we use
the multiparameter Murnaghan-Nakayama rule to give two direct and analytic formulae for the $(q, t)$-Kostka polynomials $K_{\la\mu}(q, t)$.

Our first direct formula of $K_{\la\mu}(q, t)$ employs the $(q, t)$-binomial coefficients introduced by
Lassalle \cite{L2} and Okounkov \cite{Ok} independently. The $q$-binomial theorem \cite{GR} is essentially monomial expansion of
$(x-1)(x-q)\cdots(x-q^{k-1})$. In \cite{Ok}, to obtain its multivariate generalization in the context of symmetric Macdonald polynomials, Okounkov introduced the generalized $(q, t)$-binomial coefficient $\left(\begin{matrix}\la\\ \mu\end{matrix}\right)_{q,t}$. Using our first iterative formula for the $(q,t)$-Kostka polynomial $K_{\la\mu}(q,t)$, we obtain an explicit formula computing $K_{\la\mu}(q,t)$:
\begin{align}
K_{\la\mu}(q,t)=(-1)^{|\la|}t^{n(\mu)}\sum_{\underline{\la}, \underline{\mu}}(-1)^{wid(\underline{\la})}\prod_{i=0}^{r-1}\left(\begin{matrix}\mu^{(i)}\\ \mu^{(i+1)}\end{matrix}\right)_{q,t},
\end{align}
where $\underline{\la}, \underline{\mu}$ are certain sequences of partitions and $wid(\underline{\la})$ is the total width of the sequence defined explicitly in Sec. 6.

The layout of this paper is as follows. In the next section, we review briefly some terminologies about symmetric functions and $\la$-ring notation will used in the subsequent sections. In Section 3, we give the multi-parameter Murnaghan-Nakayama rule (Corollary \ref{t:M-M-N}) in the $\la$-ring notation generalising the Murnaghan-Nakayama rule (Corollary \ref{t:M-M-N2}) for the Macdonald polynomials. As an application, a recurrence formula for the $(q,t)$-Green polynomials is given (Theorem \ref{t:iteGreen}). Some special cases of our Murnaghan-Nakayama rule are considered in Section 4. We show how special cases of our rule can recover the existing formulae (the Murnaghan-Nakayama rule for the Hecke algebras and the Hecke-Clifford algebras), which explains why we call our formula the Murnaghan-Nakayama rule for the Macdonald polynomials. Section 5 deals with the inversion of the Pieri rule for the Hall-Littlewood functions (Theorem \ref{t:inverse}). In Section 6, we offer two iterative formulae for the $(q, t)$-Kostka polynomials (Theorem \ref{t:q,t-Kite} and Theorem \ref{t:q,t-Kite2}). The first iterative formula implies a general formula expressing the $(q, t)$-Kostka polynomials in terms of the generalized $(q, t)$-binomial coefficients (Corollary \ref{t:Kgeneral}).

\section{Symmetric functions and $\lambda$-ring notation}
In this section, we review briefly some basic terminologies. The standard reference for symmetric functions and Macdonald polynomials is 
\cite[Chaps. I, VI]{Mac} 
and that for $\lambda$-rings is \cite{K}.
\subsection{Partitions and Macdonald polynomials}
A {\it partition} is a sequence of non-negative integers in decreasing order. The non-zero $\la_i$ are called the {\it parts} of $\la$. The number of the parts is called the {\it length} of $\la$, denoted by $l(\la)$, and the sum of the parts is the {\it weight} of $\la$, denoted by $|\la|$. For convenience, we call
$\la_1$ the {\it width} of the partition $\la$. If $|\la|=n$, we say $\la$ is a partition of $n$, denoted by $\la\vdash n$. $\mathcal{P}_n$ denotes the set of all partitions of $n$. We also use the multiplicity notation for a partition $\lambda$, i.e., $\la=(1^{m_1}2^{m_2}\cdots r^{m_{r}}\cdots)$ and $m_i=m_{i}(\la)={\rm Card}\{j:\la_j=i\}$.  The {\it diagram} of a partition $\la$ is defined as the set of squares such that the $i$-th row has $\la_i$ cells, $i=1,2,\cdots,l(\la)$. The {\it conjugate} of a partition $\la$ is the partition $\la'$ whose diagram is the transpose of the diagram $\la$, i.e., $\la_{i}'={\rm Card}(j:\lambda_j\geq i)$. For instance, $\la=(4,3,1)$, then $\la'=(3,2,2,1)$. Their Young diagrams can be respectively presented as follows:
\begin{gather*}
  \centering
\begin{tikzpicture}[scale=0.6]
   \coordinate (Origin)   at (0,0);
    \coordinate (XAxisMin) at (0,0);
    \coordinate (XAxisMax) at (4,0);
    \coordinate (YAxisMin) at (0,-3);
    \coordinate (YAxisMax) at (0,0);
\draw [thin, black] (0,0) -- (4,0);
    \draw [thin, black] (0,-1) -- (4,-1);
    \draw [thin, black] (0,-2) -- (3,-2);
    \draw [thin, black] (0,-3) -- (1,-3);
    \draw [thin, black] (0,0) -- (0,-3);
    \draw [thin, black] (1,0) -- (1,-3);
    \draw [thin, black] (2,0) -- (2,-2);
    \draw [thin, black] (3,0) -- (3,-2);
    \draw [thin, black] (4,0) -- (4,-1);
    \end{tikzpicture}\quad \quad\quad\quad\quad\quad\quad\quad\quad
\begin{tikzpicture}[scale=0.6]
   \coordinate (Origin)   at (0,0);
    \coordinate (XAxisMin) at (0,0);
    \coordinate (XAxisMax) at (4,0);
    \coordinate (YAxisMin) at (0,-3);
    \coordinate (YAxisMax) at (0,0);
\draw [thin, black] (0,0) -- (3,0);
    \draw [thin, black] (0,-1) -- (3,-1);
    \draw [thin, black] (0,-2) -- (2,-2);
    \draw [thin, black] (0,-3) -- (2,-3);
    \draw [thin, black] (0,-4) -- (1,-4);
    \draw [thin, black] (0,0) -- (0,-4);
    \draw [thin, black] (1,0) -- (1,-4);
    \draw [thin, black] (2,0) -- (2,-3);
    \draw [thin, black] (3,0) -- (3,-1);
    \end{tikzpicture}
\end{gather*}
We say $\mu\subset\la$ if $\mu_i\leq\la_i$ for any $i$. If $\mu\subset\la$, the set-theoretic difference $\theta=\la-\mu$ is called a {\it skew diagram}. For example, if $\la=(4,3,3,1)$ and $\mu=(3,2,1)$, the skew diagram $\theta=\la/\mu$ is the shaded region in the picture below.
\begin{gather*}
  \centering
\begin{tikzpicture}[scale=0.6]
   \coordinate (Origin)   at (0,0);
    \coordinate (XAxisMin) at (0,0);
    \coordinate (XAxisMax) at (4,0);
    \coordinate (YAxisMin) at (0,-3);
    \coordinate (YAxisMax) at (0,0);
\draw [thin, black] (0,0) -- (4,0);
    \draw [thin, black] (0,-1) -- (4,-1);
    \draw [thin, black] (0,-2) -- (3,-2);
    \draw [thin, black] (0,-3) -- (3,-3);
    \draw [thin, black] (0,-4) -- (1,-4);
    \draw [thin, black] (0,0) -- (0,-4);
    \draw [thin, black] (1,0) -- (1,-4);
    \draw [thin, black] (2,0) -- (2,-3);
    \draw [thin, black] (3,0) -- (3,-3);
    \draw [thin, black] (4,0) -- (4,-1);
    \filldraw[fill = gray][ultra thick]
    (3,0) rectangle (4,-1) (2,-1)rectangle(3,-2) (2,-2)rectangle(3,-3) (1,-2)rectangle(2,-3) (0,-3)rectangle(1,-4);
    \end{tikzpicture}
\end{gather*}

For a skew diagram $\la/\mu$, we define (cf. \cite{KL})
\begin{equation}\label{e:n}
n(\la/\mu)=\sum_i\binom{\la'_i-\mu'_i}{2}.
\end{equation}
A {\it path} in a skew diagram $\theta$ is a sequence $x_0,x_1,\cdots,x_n$ of squares in $\theta$ such that $x_{i-1}$ and $x_i$ have a common side, for $1\leq i\leq n.$ A subset $\xi$ of $\theta$ is connected if any two squares in $\xi$ are connected by a path in $\xi$. The connected components, themselves skew diagrams, are by definition the maximal connected subsets of $\theta$.
A skew diagram $\lambda/\mu$ is a {\it vertical} (resp. {\it horizontal}) {\it strip} if each row (resp. column) contains at most one box. A skew diagram $\theta$ is a {\it border strip} if it contains no $2\times 2$ blocks of squares and is connected (see \cite{Mac}).

Let $\Lambda$ be the ring of symmetric functions in the $x_n$ ($n\in\mathbb N$) over $\mathbb{Q}$, and we will also study the ring $\Lambda_{\mathbb{Z}}$ as a lattice of $\Lambda$. The ring $\Lambda$ has several linear bases indexed by partitions. For each $r>1$, let $p_{r}=\sum x_{i}^{r}$ be the $r$th power-sum. Then $p_{\lambda}=p_{\lambda_{1}}p_{\lambda_{2}}\cdots p_{\lambda_{l}}, \lambda\in\mathcal P$ form a $\mathbb Q$-basis of $\Lambda$.

Let $q,t$ be independent indeterminates and $F=\mathbb{Q}(q,t)$ be the field of rational functions in $q$ and $t$. $\Lambda_{F}=\Lambda\otimes_{\mathbb{Z}}F$ denotes the $F$-algebra of symmetric functions with coefficients in $F$. Using the degree gradation, $\Lambda_{F}$ becomes a graded ring
\begin{align}
\Lambda_{F}=\bigoplus_{n=0}^{\infty} \Lambda_{F}^{n}.
\end{align}
It is well known that the Macdonald polynomials $P_{\la}(X;q,t)$ (or $Q_{\la}(X;q,t)$) ($\la\vdash n$) form an basis of $\Lambda_{F}^{n}$.

Denote
\begin{align*}
z_{\la}(q,t)=z_{\la}\prod_{i=1}^{l(\la)}\frac{1-q^{\la_i}}{1-t^{\la_{i}}}
\end{align*}
where $z_{\lambda}=\prod_{i\geq 1}i^{m_i(\lambda)}m_i(\lambda)!$. The infinite (finite) $q$-shifted factorial in base $q$ can be defined by
\begin{align*}
(x;q)_{\infty}=\prod_{k=0}^{\infty}(1-xq^k), \quad (x;q)_{n}=\prod_{k=0}^{n-1}(1-xq^k).
\end{align*}

Introduce inner product on $\Lambda_{F}$ as follows
\begin{align}\label{e:innerprod}
\langle p_{\la}, p_{\mu} \rangle_{q,t}=\delta_{\la,\mu}z_{\la}(q,t).
\end{align}

For a positive integer $n$, the element $p_n$ acts on the space $\Lambda_{F}$ as the multiplication by $p_n$. Its adjoint operator $p_n^{\perp}$ is the differential operator $\frac{n(1-q^n)}{1-t^n}\frac{\partial}{\partial p_n}$.

Let $Q_{\la}(X;q,t)=b_{\la}(q,t)P_{\la}(X;q,t)$, so that the bases $\{P_{\la}(X;q,t)\}$ and $\{Q_{\la}(X;q,t)\}$ are dual to each other for the inner product, where $b_{\la}(q,t)=\langle P_{\la}(X;q,t), P_{\la}(X;q,t)\rangle_{q,t}^{-1}$ is given by
\begin{align*}
b_{\la}(q,t)=\prod_{1\leq i\leq j\leq l(\la)}\frac{(q^{\la_i-\la_j}t^{j-i+1};q)_{\la_j-\la_{j+1}}}{(q^{\la_i-\la_j+1}t^{j-i};q)_{\la_j-\la_{j+1}}}.
\end{align*}
We will write $P_{\la}$ (resp. $Q_{\la}$) for $P_{\la}(X;q,t)$ (resp. $Q_{\la}(X;q,t)$) in short. Sometimes we also use the notations $P_{\la}(X)$ or $Q_{\la}(X)$ to emphasize the indeterminates.

Let $\la,\mu,\nu$ be any three partitions, define
\begin{align}
f^{\la}_{\mu\nu}=f^{\la}_{\mu\nu}(q,t)=\langle Q_{\la}, P_{\mu}P_{\nu} \rangle_{q,t}.
\end{align}
Equivalently, $f^{\la}_{\mu\nu}$ is the coefficient of $P_{\la}$ in the product $P_{\mu}P_{\nu}$, i.e.,
\begin{align}\label{e:PP}
P_{\mu}P_{\nu}=\sum_{\la}f^{\la}_{\mu\nu}P_{\la}.
\end{align}

Now let $\mu\subset\la$ be two partitions and define $Q_{\la/\mu}$ by
\begin{align}\label{e:la/mu}
Q_{\la/\mu}=\sum_{\nu}f^{\la}_{\mu\nu}Q_{\nu},
\end{align}
which yields
\begin{align*}
\langle Q_{\la/\mu}, P_{\nu} \rangle_{q,t}=\langle Q_{\la}, P_{\mu}P_{\nu} \rangle_{q,t}.
\end{align*}

Let $X=(x_1,x_2,\cdots)$ and $Y=(y_1,y_2,\cdots)$ be two finite or infinite sequences of independent indeterminates, define
\begin{align}\label{e:defCauchy}
\prod(X,Y;q,t)=\prod_{i,j}\frac{(tx_iy_j;q)_{\infty}}{(x_iy_j;q)_{\infty}}.
\end{align}

\begin{lem}\label{t:product}
(\cite[p. 310]{Mac}) Given $n\geq0$, if $(u_{\la})$, $(v_{\la})$ are two $F$-bases of $\Lambda_{F}^{n}$, indexed by partitions of $n$. Then $\langle u_{\la}, v_{\mu} \rangle_{q,t}=\delta_{\la\mu}$ is equivalent to $\sum_{\la}u_{\la}(X)v_{\la}(Y)=\prod(X,Y;q,t)$. In particular, $\sum_{\la}P_{\la}(X)Q_{\la}(Y)=\prod(X,Y;q,t)$.
\end{lem}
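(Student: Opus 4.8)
The plan is to pass to the power-sum basis $(p_\rho)_{\rho\vdash n}$, in which the inner product \eqref{e:innerprod} is diagonal, and to reduce the whole statement to a matrix identity together with one genuine computation: the power-sum expansion of the Cauchy kernel. Concretely, the first task is to establish
\[
\prod(X,Y;q,t)=\sum_{\la}\frac{p_\la(X)p_\la(Y)}{z_\la(q,t)},
\]
after which everything becomes formal linear algebra.

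To obtain this expansion I would take logarithms in \eqref{e:defCauchy}. Using $(z;q)_\infty=\prod_{k\ge0}(1-zq^k)$ and expanding each factor as a double series, one gets $\log(z;q)_\infty=-\sum_{r\ge1}\frac{z^r}{r(1-q^r)}$, so that
\[
\log\frac{(tx_iy_j;q)_\infty}{(x_iy_j;q)_\infty}=\sum_{r\ge1}\frac{1-t^r}{r(1-q^r)}(x_iy_j)^r.
\]
Summing over $i,j$ yields $\log\prod(X,Y;q,t)=\sum_{r\ge1}\frac{1}{r}\frac{1-t^r}{1-q^r}\,p_r(X)p_r(Y)$. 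Exponentiating and collecting the contributions of each partition through the standard identity $\exp\big(\sum_r c_r p_r/r\big)=\sum_\la z_\la^{-1}\big(\prod_i c_{\la_i}\big)p_\la$, applied with $c_r=(1-t^r)/(1-q^r)$, produces exactly the claimed expansion, since $z_\la(q,t)=z_\la\prod_i\frac{1-q^{\la_i}}{1-t^{\la_i}}$.

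With the kernel in hand I fix $n$; because the pairing is homogeneous and the kernel splits by degree, it suffices to match the component of bidegree $(n,n)$, i.e.\ the sum over $\rho\vdash n$ in the expansion above. Write the two bases in the power-sum basis, $u_\la=\sum_\rho a_{\la\rho}p_\rho$ and $v_\la=\sum_\sigma b_{\la\sigma}p_\sigma$, and let $A=(a_{\la\rho})$, $B=(b_{\la\sigma})$ and $Z=\mathrm{diag}(z_\rho(q,t))$ be the corresponding finite square matrices indexed by partitions of $n$. By \eqref{e:innerprod} and bilinearity, $\langle u_\la,v_\mu\rangle_{q,t}=(AZB^{T})_{\la\mu}$, so the orthogonality condition $\langle u_\la,v_\mu\rangle_{q,t}=\delta_{\la\mu}$ is precisely $AZB^{T}=I$. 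On the other hand, since the products $p_\rho(X)p_\sigma(Y)$ are linearly independent, comparing coefficients in $\sum_\la u_\la(X)v_\la(Y)=\sum_{\rho,\sigma}(A^{T}B)_{\rho\sigma}p_\rho(X)p_\sigma(Y)$ against the kernel expansion shows that the Cauchy identity $\sum_\la u_\la(X)v_\la(Y)=\prod(X,Y;q,t)$ is precisely $A^{T}B=Z^{-1}$.

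It remains to see that $AZB^{T}=I\iff A^{T}B=Z^{-1}$, which is a purely formal fact about square matrices over the field $F$: from $AZB^{T}=I$ the matrix $ZB^{T}$ is a two-sided inverse of $A$, hence $ZB^{T}A=I$, i.e.\ $B^{T}A=Z^{-1}$; transposing and using that the diagonal matrix $Z^{-1}$ is symmetric gives $A^{T}B=Z^{-1}$, and the converse is identical. This establishes the equivalence. The final assertion follows by taking $u_\la=P_\la$ and $v_\la=Q_\la$, which are dual by construction, $\langle P_\la,Q_\mu\rangle_{q,t}=\delta_{\la\mu}$; the equivalence then yields $\sum_\la P_\la(X)Q_\la(Y)=\prod(X,Y;q,t)$ after summing over $n$. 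I expect the only real work to be the kernel expansion in the second paragraph; the matrix identity is routine.
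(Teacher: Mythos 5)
Your proof is correct. The paper gives no argument of its own for this lemma---it simply cites \cite[p.~310]{Mac}---and what you have written (the power-sum expansion $\prod(X,Y;q,t)=\sum_{\lambda}z_{\lambda}(q,t)^{-1}p_{\lambda}(X)p_{\lambda}(Y)$ obtained by taking logarithms, followed by the equivalence $AZB^{T}=I\iff A^{T}B=Z^{-1}$ for square matrices over $F$) is precisely the classical argument in that cited source, with the bidegree-$(n,n)$ reduction and the duality $\langle P_{\lambda},Q_{\mu}\rangle_{q,t}=\delta_{\lambda\mu}$ handled correctly.
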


\subsection{$\la$-ring notations}
We think of an alphabet $X$ as a sum of commuting variables, i.e., $X_n=x_1+x_2+\cdots+x_n$ is the set of commuting variables $\{x_1, x_2, \cdots, x_n\}.$ From this point of view we can use the following notations:
\begin{align*}
X_n&=\{x_1,x_2,\cdots,x_n\},\\
Y_m&=\{y_1,y_2,\cdots,y_m\},\\
X_nY_m&=\{x_iy_j\}_{1\leq i\leq n, 1\leq j\leq m},\\
X_n+Y_m&=\{x_1,x_2,\cdots,x_n,y_1,y_2,\cdots,y_m\}.
\end{align*}
Moreover, let $-X$ denote a formal (anti-)alphabet such that $X+(-X)=0$.

We have the following properties of the power sum functions:
\begin{align}\label{e:power}
\begin{split}
p_r(X+Y)&=p_r(X)+p_r(Y),\\
p_r(XY)&=p_r(X)p_r(Y),\\
p_r(-X)&=-p_{r}(X).
\end{split}
\end{align}

Using $\la$-ring notations, we can rewrite \eqref{e:defCauchy} as follows:
\begin{align*}
\prod(XY;q,t)=\prod_{i,j}\frac{(tx_iy_j;q)_{\infty}}{(x_iy_j;q)_{\infty}}.
\end{align*}

\begin{lem}
Let $X, Y, Z$ be three alphabets, we have
\begin{align}
\label{e:propCauchy}
\begin{split}
(i)\quad \prod((X+Y)Z;q,t)&=\prod(XZ+YZ;q,t)=\prod_{i,j}\frac{(tx_iz_j;q)_{\infty}}{(x_iz_j;q)_{\infty}}
\prod_{k,l}\frac{(ty_kz_l;q)_{\infty}}{(y_kz_l;q)_{\infty}},\\
(ii)\quad \prod((X-Y)Z;q,t)&=\prod(XZ-YZ;q,t)=\prod_{i,j}\frac{(tx_iz_j;q)_{\infty}}{(x_iz_j;q)_{\infty}}
\prod_{k,l}\frac{(y_kz_l;q)_{\infty}}{(ty_kz_l;q)_{\infty}}.
\end{split}
\end{align}
\end{lem}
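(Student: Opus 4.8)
The plan is to reduce both identities to the additivity properties of power sums recorded in \eqref{e:power}, after first rewriting the Cauchy kernel \eqref{e:defCauchy} as an exponential of a power-sum series. Concretely, I would take the formal logarithm of a single factor: expanding $\log(1-taq^{k})-\log(1-aq^{k})$ via $\log(1-x)=-\sum_{r\geq1}x^{r}/r$ and summing the resulting geometric series over $k\geq0$ gives
\begin{align*}
\log\frac{(ta;q)_{\infty}}{(a;q)_{\infty}}=\sum_{r\geq1}\frac{1-t^{r}}{r(1-q^{r})}\,a^{r}.
\end{align*}
Summing over $a=x_{i}y_{j}$ and invoking the multiplicativity $p_{r}(XY)=p_{r}(X)p_{r}(Y)$ from \eqref{e:power} then yields the representation
\begin{align*}
\prod(XY;q,t)=\exp\Bigl(\sum_{r\geq1}\frac{1-t^{r}}{r(1-q^{r})}\,p_{r}(XY)\Bigr),
\end{align*}
which exhibits the kernel as a function of the single alphabet $XY$ depending on it only through its power sums.

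Once this representation is in place, both identities follow immediately. For $(i)$, I would observe that $(X+Y)Z$ and $XZ+YZ$ have identical power sums for every $r$: indeed $p_{r}((X+Y)Z)=p_{r}(X+Y)p_{r}(Z)=(p_{r}(X)+p_{r}(Y))p_{r}(Z)=p_{r}(XZ)+p_{r}(YZ)$ by \eqref{e:power}, which already secures the first equality $\prod((X+Y)Z;q,t)=\prod(XZ+YZ;q,t)$. Substituting into the exponential and splitting the sum factors the kernel as $\prod(XZ;q,t)\,\prod(YZ;q,t)$, and each factor is precisely the corresponding $q$-shifted-factorial product by \eqref{e:defCauchy}, giving the second equality. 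For $(ii)$ the argument is the same up to a sign: since $p_{r}(-Y)=-p_{r}(Y)$, one gets $p_{r}((X-Y)Z)=p_{r}(XZ)-p_{r}(YZ)$, so the exponential factors as $\prod(XZ;q,t)\,\prod(YZ;q,t)^{-1}$; inverting the second product merely swaps numerator and denominator, producing $\prod_{k,l}(y_{k}z_{l};q)_{\infty}/(ty_{k}z_{l};q)_{\infty}$.

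The only genuinely computational step is the single-factor logarithm expansion together with the interchange of the summations over $k$ and $r$; since everything takes place at the level of formal power series in the products $x_{i}y_{j}$ (or is justified by convergence when the alphabets are finite), no analytic subtlety arises. The main conceptual point—and the reason the proof is short—is that the Cauchy kernel depends on the alphabets solely through $p_{r}(XY)$, so the $\lambda$-ring rules \eqref{e:power} transport additive (resp. subtractive) relations among alphabets directly into multiplicative (resp. division) relations among the kernels. I expect no serious obstacle; the bookkeeping of signs in $(ii)$ is the only place that requires care.
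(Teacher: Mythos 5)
Your proof is correct and follows essentially the same route as the paper: both rewrite the kernel $\prod(XY;q,t)$ as $\exp\left(\sum_{r\geq1}\frac{1}{r}\frac{1-t^{r}}{1-q^{r}}p_{r}(XY)\right)$ and then apply the power-sum rules \eqref{e:power}. The only difference is that you derive this exponential form explicitly via the logarithm expansion, whereas the paper simply states it; your sign bookkeeping in $(ii)$ is also accurate.
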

\begin{proof}
Note that
\begin{align*}
\prod(XY;q,t)=\prod_{i,j}\frac{(tx_iy_j;q)_{\infty}}{(x_iy_j;q)_{\infty}}=\mbox{exp} \left(\sum_{n\geq1}\frac{1}{n}\frac{1-t^n}{1-q^n}p_n(XY)\right).
\end{align*}
Then \eqref{e:propCauchy} follows from \eqref{e:power}.
\end{proof}

\section{Murnaghan-Nakayama rule with multiple parameters}
In \cite{JL1}, we obtained a Murnaghan-Nakayama rule for Hall-Littlewood functions, with which an iterative formula for the Green polynomial is given. In this section, we give the corresponding generalization: a Murnaghan-Nakayama rule for Macdonald functions and an iterative formula for the $(q, t)$-Green polynomial.
\subsection{Murnaghan-Nakayama rule}
Let $a$ be an indeterminate, $(a;q)_{\infty}$ denotes the infinite product
\begin{align*}
(a;q)_{\infty}=\prod_{r\geq0}(1-aq^r)
\end{align*}
regarded as a formal power series in $a$ and $q$.

We introduce the one-row Macdonald function $g_{r}(X;q,t)$ defined by the following generating function:
\begin{align}\label{e:defg}
\mathbf{g}(X;z)=\sum_{n\geq0}g_{n}(X;q,t)z^n=\prod_{i\geq1}\frac{(tx_iz;q)_{\infty}}{(x_iz;q)_{\infty}}=\mbox{exp} \left(\sum_{n\geq1}\frac{1}{n}\frac{1-t^n}{1-q^n}p_n(x)z^n\right).
\end{align}

Explicitly, we have
\begin{align*}
g_{n}(X;q,t)=\sum_{\la\vdash n}\frac{p_{\la}(X)}{z_{\la}(q,t)},\quad g_{n}(X;q,t)=Q_{(n)}(X;q,t)=\frac{(t;q)_{n}}{(q;q)_{n}}P_{(n)}(X;q,t).
\end{align*}
It is often written in the language of $\la$-rings \cite[p. 223]{L}:
\begin{align*}
g_{k}(X;q,t)=h_k\left(\frac{1-t}{1-q}X\right),
\end{align*}
where $h_k$ is the complete symmetric function.

Let $\mathbf{a}=(a_1,a_2,\cdots,a_s,\cdots)$ and $\mathbf{b}=(b_1,b_2,\cdots,b_r,\cdots)$ be two sequences of finite (or infinite) non-negative parameters. Moreover, denote $\mathbf{a/b}=(a_1,\cdots,a_s,\cdots,-b_1,\cdots,-b_r,\cdots)$. Now we introduce new symmetric functions $g_{n}(\mathbf{a/b};X;q,t)$ with multiparameters defined by
\begin{align}\label{e:defg(a/b)}
\mathbf{g}(\mathbf{a/b};X;z)=\sum_{n\geq0}g_{n}(\mathbf{a/b};X;q,t)z^n=\frac{\prod_{j=1}^{\infty}\mathbf{g}(X;a_jz)}{\prod_{k=1}^{\infty}\mathbf{g}(X;b_kz)}
=\prod_{i\geq1}\prod_{j=1}^{\infty}\prod_{k=1}^{\infty}\frac{(a_jtx_iz;q)_{\infty}}{(a_jx_iz;q)_{\infty}}\frac{(b_kx_iz;q)_{\infty}}{(b_ktx_iz;q)_{\infty}}.
\end{align}
By the last identity of \eqref{e:defg}, $\mathbf{g}(\mathbf{a/b};X;z)$ can be rewritten as
\begin{align*}
\mathbf{g}(\mathbf{a/b};X;z)&=\mbox{exp} \left(\sum_{i=1}^{\infty}\sum_{n\geq1}\frac{1}{n}\frac{1-t^n}{1-q^n}p_n(x)a_i^nz^n-\sum_{j=1}^{\infty}\sum_{n\geq1}\frac{1}{n}\frac{1-t^n}{1-q^n}p_n(x)b_j^nz^n\right)\\
&=\mbox{exp} \left(\sum_{n\geq1}\frac{1}{n}\frac{1-t^n}{1-q^n}p_n(x)p_n(\mathbf{a})z^n-\sum_{n\geq1}\frac{1}{n}\frac{1-t^n}{1-q^n}p_n(x)p_n(\mathbf{b})z^n\right).
\end{align*}

In $\la$-ring language, it follows from \eqref{e:power} that
\begin{align*}
\mathbf{g}(\mathbf{a/b};X;z)&=\mbox{exp} \left(\sum_{n\geq1}\frac{1}{n}\frac{1-t^n}{1-q^n}p_n((\mathbf{a}-\mathbf{b})X)z^n\right)=\mathbf{g}((\mathbf{a}-\mathbf{b})X;z).
\end{align*}
Subsequently,
\begin{align*}
g_{k}(\mathbf{a/b};X;q,t)=g_{k}((\mathbf{a-b})X;q,t).
\end{align*}

Note $p_n^{\bot}=\frac{n(1-q^n)}{1-t^n}\frac{\partial}{\partial p_n}$ with respect to \eqref{e:innerprod}. We have the adjoint operators:
\begin{align}\label{e:gdual}
\mathbf{g}^{\bot}((\mathbf{a}-\mathbf{b})X;z)=\mbox{exp} \left(\sum_{n\geq1}\frac{\partial}{\partial p_n(x)}p_n(\mathbf{a})z^n-\sum_{n\geq1}\frac{\partial}{\partial p_n(x)}p_n(\mathbf{b})z^n\right)=\sum_{k=0}^{\infty}g^{\bot}_{k}((\mathbf{a-b})X;q,t).
\end{align}

\begin{thm}\label{t:skew-M-N}
Let $\mu\vdash n$ and $k$ be a non-negative integer, suppose that $\rho\subset\mu$, then we have
\begin{align}\label{e:skew-M-N}
g_{k}((\mathbf{a-b})X;q,t)Q_{\mu/\rho}(X;q,t)=\sum_{\la\vdash n-|\rho|+k}\sum_{\tau\vdash n-|\rho|}f^{\mu}_{\tau\rho}(q,t)P_{\la/\tau}(\mathbf{a-b};q,t)Q_{\la}(X;q,t),
\end{align}
where $P_{\la/\tau}(\mathbf{a-b};q,t)$ employs the $\la$-ring notation.
\end{thm}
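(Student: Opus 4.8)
The plan is to expand the left-hand side of \eqref{e:skew-M-N} in the basis $\{Q_\la\}$ and read off each coefficient by pairing against the dual basis $\{P_\la\}$. Since $\langle P_\la, Q_\mu\rangle_{q,t}=\delta_{\la\mu}$, for any $F\in\Lambda_F$ the coefficient of $Q_\la$ in $F$ equals $\langle F, P_\la\rangle_{q,t}$. Applying this to $F=g_{k}((\mathbf{a}-\mathbf{b})X;q,t)\,Q_{\mu/\rho}(X)$ and using that multiplication by $g_{k}((\mathbf{a}-\mathbf{b})X;q,t)$ is adjoint to the operator $g^{\bot}_{k}((\mathbf{a}-\mathbf{b})X;q,t)$ recorded in \eqref{e:gdual}, the coefficient of $Q_\la$ becomes $\langle Q_{\mu/\rho},\, g^{\bot}_{k}((\mathbf{a}-\mathbf{b})X;q,t)P_\la\rangle_{q,t}$. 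Thus everything reduces to computing how the adjoint operator acts on a single $P_\la$.

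For the second step I would identify the generating operator $\mathbf{g}^{\bot}((\mathbf{a}-\mathbf{b})X;z)$ of \eqref{e:gdual} as an alphabet shift. Since $p_n^{\bot}=\tfrac{n(1-q^n)}{1-t^n}\tfrac{\partial}{\partial p_n}$, the exponent in \eqref{e:gdual} is exactly $\sum_{n\ge1}p_n(\mathbf{a}-\mathbf{b})z^n\tfrac{\partial}{\partial p_n}$, so $\mathbf{g}^{\bot}((\mathbf{a}-\mathbf{b})X;z)$ is the Taylor shift sending $p_n(X)\mapsto p_n(X)+z^n p_n(\mathbf{a}-\mathbf{b})$; in $\la$-ring terms this is precisely the substitution $X\mapsto X+z(\mathbf{a}-\mathbf{b})$, whence $\mathbf{g}^{\bot}((\mathbf{a}-\mathbf{b})X;z)P_\la(X)=P_\la\!\big(X+z(\mathbf{a}-\mathbf{b})\big)$. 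I then invoke the coproduct (addition) formula for Macdonald polynomials, $P_\la(X+Y)=\sum_\tau P_{\la/\tau}(Y)P_\tau(X)$ from \cite[Chap.~VI]{Mac}, with $Y=z(\mathbf{a}-\mathbf{b})$, together with the homogeneity $P_{\la/\tau}(z(\mathbf{a}-\mathbf{b}))=z^{|\la|-|\tau|}P_{\la/\tau}(\mathbf{a}-\mathbf{b})$. Extracting the coefficient of $z^k$ yields $g^{\bot}_{k}((\mathbf{a}-\mathbf{b})X;q,t)P_\la=\sum_{|\tau|=|\la|-k}P_{\la/\tau}(\mathbf{a}-\mathbf{b})\,P_\tau(X)$.

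Finally I would substitute this into the pairing and use the defining adjunction of the skew functions. By \eqref{e:la/mu} and the definition of $f^{\mu}_{\tau\rho}$ one has $\langle Q_{\mu/\rho}, P_\tau\rangle_{q,t}=\langle Q_{\mu}, P_{\rho}P_{\tau}\rangle_{q,t}=f^{\mu}_{\tau\rho}(q,t)$, so the coefficient of $Q_\la$ collapses to $\sum_{\tau}f^{\mu}_{\tau\rho}(q,t)\,P_{\la/\tau}(\mathbf{a}-\mathbf{b})$. A degree count then pins down the index sets: $f^{\mu}_{\tau\rho}\ne0$ forces $|\tau|=|\mu|-|\rho|=n-|\rho|$, and the skew condition $|\tau|=|\la|-k$ forces $|\la|=n-|\rho|+k$, matching the ranges $\tau\vdash n-|\rho|$ and $\la\vdash n-|\rho|+k$ in the statement. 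Reassembling $\sum_\la(\text{coefficient})\,Q_\la$ produces exactly \eqref{e:skew-M-N}.

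I expect the only genuinely delicate point to be the middle step: justifying rigorously that the adjoint generating operator \eqref{e:gdual} coincides with the $\la$-ring substitution $X\mapsto X+z(\mathbf{a}-\mathbf{b})$ (equivalently, that passing to adjoints turns the multiplication operator $\exp(\sum_n c_n p_n)$ into $\exp(\sum_n c_n p_n^{\bot})$, which is legitimate because the $p_n$ mutually commute), and then aligning this with the normalization of the skew $P_{\la/\tau}$ used in the coproduct formula. Once these conventions are fixed, the rest is bookkeeping of degrees and the standard duality between $\{P_\la\}$ and $\{Q_\la\}$.
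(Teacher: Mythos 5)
Your proof is correct, but it takes a genuinely different route from the paper's. The paper works with a generic second alphabet $Y$: it multiplies $Q_{\mu/\rho}(zX;q,t)$ by the Cauchy kernel $\prod(zX,Y;q,t)$, expands using Lemma \ref{t:product}, the structure constants from \eqref{e:PP} and the definition \eqref{e:la/mu} of the skew functions, arrives at the identity $Q_{\mu/\rho}(zX;q,t)\prod(zX,Y;q,t)=\sum_{\la,\tau}f^{\mu}_{\tau\rho}(q,t)P_{\la/\tau}(Y;q,t)Q_{\la}(zX;q,t)$ valid for arbitrary $Y$, and only then specializes $Y=\mathbf{a}-\mathbf{b}$ (turning the kernel into $\mathbf{g}((\mathbf{a-b})X;z)$) and compares powers of $z$. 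You instead read off the coefficient of $Q_{\la}$ by pairing against $P_{\la}$, pass to the adjoint operator $g^{\bot}_{k}$ of \eqref{e:gdual}, identify its generating series as the $\la$-ring translation $X\mapsto X+z(\mathbf{a}-\mathbf{b})$, and quote Macdonald's addition formula $P_{\la}(X+Y)=\sum_{\tau}P_{\la/\tau}(Y)P_{\tau}(X)$. All of your individual steps check out: the adjoint of $\exp(\sum_n c_n p_n)$ is indeed $\exp(\sum_n c_n p_n^{\bot})$ since the $p_n$ commute and everything acts degree by degree; the Taylor-shift identification is standard; $\langle Q_{\mu/\rho},P_{\tau}\rangle_{q,t}=f^{\mu}_{\rho\tau}=f^{\mu}_{\tau\rho}$ by \eqref{e:la/mu} and commutativity of $P_{\rho}P_{\tau}$; and the degree count fixes the index sets exactly as in \eqref{e:skew-M-N}. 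What each approach buys: the paper's computation is self-contained modulo the Cauchy identity (it effectively re-derives the coproduct content inline, and proves the stronger statement with arbitrary $Y$ en route), whereas your argument is shorter and more conceptual once Macdonald's coproduct formula is taken as known, and it makes the dual version \eqref{e:dual} of the Remark immediate, since the same translation-operator computation applied to $Q_{\la}$ gives $g^{\bot}_{k}Q_{\la}=\sum_{\mu}Q_{\la/\mu}(\mathbf{a}-\mathbf{b};q,t)Q_{\mu}$ directly. The two arguments are of course cousins at bottom—Macdonald's addition formula is itself proven by a kernel argument—so the difference lies mainly in where the work is outsourced.
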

\begin{proof}
Let $Y$ be an arbitrary alphabet and $z$ be an indeterminate, then we have
\begin{align*}
&Q_{\mu/\rho}(zX;q,t)\prod(zX,Y;q,t)\\
=&\sum_{\nu}Q_{\mu/\rho}(zX;q,t)Q_{\nu}(Y;q,t)P_{\nu}(zX;q,t) \quad\text{(by Lemma \ref{t:product})}\\
=&\sum_{\nu}\sum_{\tau}f^{\mu}_{\tau\rho}(q,t)Q_{\tau}(zX;q,t)Q_{\nu}(Y;q,t)P_{\nu}(zX;q,t)  \quad\text{(by \eqref{e:la/mu})}\\
=&\sum_{\la,\nu}\sum_{\tau}f^{\mu}_{\tau\rho}(q,t)b_{\tau}(q,t)f^{\la}_{\tau\nu}Q_{\nu}(Y;q,t)P_{\la}(zX;q,t) \quad\text{(by \eqref{e:PP})}\\
=&\sum_{\la,\tau}f^{\mu}_{\tau\rho}(q,t)b_{\tau}(q,t)Q_{\la/\tau}(Y;q,t)P_{\la}(zX;q,t) \quad\text{(by \eqref{e:la/mu})}\\
=&\sum_{\la,\tau}f^{\mu}_{\tau\rho}(q,t)P_{\la/\tau}(Y;q,t)Q_{\la}(zX;q,t).
\end{align*}
Choose $Y=\mathbf{a-b}$. Note that in this case we have
\begin{align*}
\prod(zX,\mathbf{a-b};q,t)=\mathbf{g}((\mathbf{a-b})X;z)=\sum_{r\geq0}g_{r}((\mathbf{a-b})X;q,t)z^r.
\end{align*}
Therefore,
\begin{align*}
Q_{\mu/\rho}(zX;q,t)\left(\sum_{r\geq0}g_{r}((\mathbf{a-b})X;q,t)z^r\right)=\sum_{\la,\tau}f^{\mu}_{\tau\rho}(q,t)P_{\la/\tau}(\mathbf{a-b};q,t)Q_{\la}(zX;q,t).
\end{align*}
Comparing the coefficient of $z^{n-|\rho|+k}$ on both sides gives \eqref{e:skew-M-N}.
\end{proof}

Choosing $\rho=\emptyset$ in Theorem \ref{t:skew-M-N} gives the following Corollary.

\begin{cor}\label{t:M-M-N}
Let $\mu\vdash n$ and $k$ be a non-negative integer, then we have
\begin{align}\label{e:M-M-N}
g_{k}((\mathbf{a-b})X;q,t)P_{\mu}(X;q,t)=\sum_{\la\vdash n+k}Q_{\la/\mu}(\mathbf{a-b};q,t)P_{\la}(X;q,t).
\end{align}
\end{cor}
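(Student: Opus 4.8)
The plan is to derive Corollary \ref{t:M-M-N} as the specialization $\rho=\emptyset$ of Theorem \ref{t:skew-M-N}, which has just been proved, and then simplify the resulting sums using the definitions of the skew functions. When $\rho=\emptyset$, the skew function $Q_{\mu/\emptyset}$ collapses to $Q_{\mu}$; however, the left-hand side of \eqref{e:M-M-N} involves $P_{\mu}$ rather than $Q_{\mu}$. So the first issue to address is the normalization: since $Q_{\mu}=b_{\mu}(q,t)P_{\mu}$, I expect either to start the substitution with $Q_\mu$ and then divide through by $b_\mu(q,t)$, or to track the $b$-factors carefully through the structure constants. This rescaling is the one genuinely substantive bookkeeping point.

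First I would set $\rho=\emptyset$ in \eqref{e:skew-M-N}. On the right-hand side the inner sum ranges over $\tau\vdash n-|\rho| = n$, and the structure constant becomes $f^{\mu}_{\tau\emptyset}(q,t)=\langle Q_{\mu},P_{\tau}P_{\emptyset}\rangle_{q,t}=\langle Q_{\mu},P_{\tau}\rangle_{q,t}=\delta_{\mu\tau}$, since $P_\emptyset=1$ and the bases $\{P\}$, $\{Q\}$ are dual. This collapses the $\tau$-sum to the single term $\tau=\mu$, leaving $g_k((\mathbf{a-b})X)\,Q_{\mu}(X)=\sum_{\la\vdash n+k} P_{\la/\mu}(\mathbf{a-b};q,t)\,Q_\la(X)$. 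So the skew Macdonald function appearing is $P_{\la/\mu}$ evaluated at the $\lambda$-ring argument $\mathbf{a-b}$.

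Next I would convert both sides to the $P$-normalization to match the stated form. Using $Q_{\mu}=b_{\mu}P_{\mu}$ on the left and $Q_\la = b_\la P_\la$ on the right gives $b_\mu\, g_k((\mathbf{a-b})X)\,P_{\mu}(X)=\sum_{\la\vdash n+k} b_\la\,P_{\la/\mu}(\mathbf{a-b};q,t)\,P_\la(X)$, and dividing by $b_\mu$ reduces the claim to the identity $\frac{b_\la}{b_\mu}P_{\la/\mu}(\mathbf{a-b};q,t)=Q_{\la/\mu}(\mathbf{a-b};q,t)$ for each relevant $\la$. The point I would verify here is precisely this duality relation between the two skew functions. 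From the definitions \eqref{e:la/mu} one has $Q_{\la/\mu}=\sum_\nu f^{\la}_{\mu\nu}Q_\nu$, and one can dually define $P_{\la/\mu}$ via $\langle P_{\la/\mu},Q_\nu\rangle = \langle P_\la, Q_\mu Q_\nu\rangle$; the standard relation (see \cite[Ch. VI]{Mac}) is $Q_{\la/\mu}=(b_\la/b_\mu)P_{\la/\mu}$, which is exactly what is needed. Substituting this back yields \eqref{e:M-M-N} term by term.

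The main obstacle I anticipate is not the algebra but keeping the two skew conventions straight: Theorem \ref{t:skew-M-N} is phrased with $Q_{\mu/\rho}$ on the left and $P_{\la/\tau}$ on the right, whereas the Corollary wants $P_{\mu}$ on the left and $Q_{\la/\mu}$ on the right, so the $P\leftrightarrow Q$ swap induced by the normalization factors $b_\la/b_\mu$ must land correctly. Once the relation $Q_{\la/\mu}=(b_\la/b_\mu)P_{\la/\mu}$ is invoked, the whole reduction is a short substitution with no further analytic content; the only care required is confirming that the weight constraint $\la\vdash n+k$ (coming from $n-|\rho|+k$ with $|\rho|=0$) is the correct index set on the right-hand side.
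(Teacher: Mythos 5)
Your proof is correct and follows essentially the same route as the paper, which simply sets $\rho=\emptyset$ in Theorem \ref{t:skew-M-N}. The only difference is that you spell out the $P$-versus-$Q$ normalization bookkeeping via $Q_{\la/\mu}=(b_\la/b_\mu)P_{\la/\mu}$, a step the paper leaves implicit (it is the same relation already used in the last line of the proof of Theorem \ref{t:skew-M-N}), so your write-up is a faithful, slightly more detailed version of the paper's argument.
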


\begin{rem}
By the orthogonality of Macdonald polynomials, we have the dual version of \eqref{e:M-M-N} as follows:
\begin{align}\label{e:dual}
g^{\bot}_{k}((\mathbf{a-b})X;q,t)Q_{\la}(X;q,t)=\sum_{\mu\vdash |\la|-k}Q_{\la/\mu}(\mathbf{a-b};q,t)Q_{\mu}(X;q,t).
\end{align}
\end{rem}

We are ready to state our first main result: a Murnaghan-Nakayama rule for Macdonald polynomials. Before that we introduce a modification of $g_{k}((a-1)X;q,t)$ by $\tilde{g}_{k}((a-1)X;q,t):=\frac{1}{a-1}g_{k}((a-1)X;q,t).$ Then we have
\begin{align}\label{e:limit}
\lim_{a\rightarrow1}\tilde{g}_{k}((a-1)X;q,t)=\frac{1-t^k}{1-q^k}p_k(X).
\end{align}
\begin{cor}\label{t:M-M-N2}
Let $\mu\vdash n$ and $k$ be a non-negative integer, then we have
\begin{align}\label{e:M-M-N2}
\tilde{g}_{k}((a-1)X;q,t)P_{\mu}(X;q,t)=\sum_{\la\vdash n+k}\frac{Q_{\la/\mu}(a-1;q,t)}{a-1}P_{\la}(X;q,t).
\end{align}
\end{cor}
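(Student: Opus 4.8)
The plan is to derive \eqref{e:M-M-N2} as an immediate specialization of the general rule \eqref{e:M-M-N} established in Corollary \ref{t:M-M-N}, so that essentially no new computation is required. First I would take the two parameter sequences of Corollary \ref{t:M-M-N} to be the single-term sequences $\mathbf{a}=(a)$ and $\mathbf{b}=(1)$. With this choice the $\la$-ring alphabet $\mathbf{a-b}$ collapses to the two-letter difference $a-1$, and \eqref{e:M-M-N} specializes to
\begin{align*}
g_{k}((a-1)X;q,t)P_{\mu}(X;q,t)=\sum_{\la\vdash n+k}Q_{\la/\mu}(a-1;q,t)P_{\la}(X;q,t).
\end{align*}
This is already \eqref{e:M-M-N2} up to the normalizing factor.

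The second and final step is purely formal. Working over the fraction field in the indeterminate $a$, I would divide both sides of the displayed identity by $a-1$. The left-hand side becomes $\tilde{g}_{k}((a-1)X;q,t)P_{\mu}(X;q,t)$ by the very definition $\tilde{g}_{k}((a-1)X;q,t)=\frac{1}{a-1}g_{k}((a-1)X;q,t)$, while the right-hand side turns into $\sum_{\la\vdash n+k}\frac{Q_{\la/\mu}(a-1;q,t)}{a-1}P_{\la}(X;q,t)$, which is exactly \eqref{e:M-M-N2}.

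There is no genuine obstacle here: the content of the statement lies in the choice of normalization rather than in any analytic difficulty. The one point worth recording is that for $k\geq1$ the skew function $Q_{\la/\mu}(a-1;q,t)$ is a combination of the $Q_{\nu}$ with $|\nu|=k\geq1$, and since $p_n(a-1)=a^n-1$ is divisible by $a-1$, each coefficient $\frac{Q_{\la/\mu}(a-1;q,t)}{a-1}$ is in fact polynomial in $a$; this is what makes the limit $a\to1$ in \eqref{e:limit} meaningful and explains why $\tilde{g}_{k}$ rather than $g_{k}$ is the correct operator underlying a Murnaghan-Nakayama rule. In the boundary case $k=0$ one has $\la=\mu$ and $Q_{\mu/\mu}=1$, so both sides reduce to $\frac{1}{a-1}P_{\mu}$ and the identity still holds as a rational expression.
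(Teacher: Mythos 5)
Your proposal is correct and follows exactly the paper's own route: the paper proves Corollary \ref{t:M-M-N2} by choosing $\mathbf{a}=(a,0,\cdots,0)$ and $\mathbf{b}=(1,0,\cdots,0)$ in Corollary \ref{t:M-M-N} and then dividing by $a-1$ via the definition of $\tilde{g}_k$, which is the same specialization you make (the trailing zeros contribute nothing). Your added remarks on the divisibility of $Q_{\la/\mu}(a-1;q,t)$ by $a-1$ and the case $k=0$ are correct but not required for the statement itself.
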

\begin{proof}
Choose $\mathbf{a}=(a,0,\cdots,0)$ and $\mathbf{b}=(1,0\cdots,0)$ in Corollary \ref{t:M-M-N}.
\end{proof}

Now we will compute the coefficients $Q_{\la/\mu}(a-1;q,t)$ and derive an explicit formula for it. To reach this, we need some notations. For a skew diagram
$\la/\mu$, denote
\begin{align*}
f(u)&=\frac{(tu;q)_{\infty}}{(qu;q)_{\infty}}, \quad \varphi_{\la/\mu}(q,t)=\prod_{1\leq i\leq j\leq l(\la)}\frac{f(q^{\la_i-\la_j}t^{j-i})f(q^{\mu_i-\mu_{j+1}}t^{j-i})}{f(q^{\la_i-\mu_j}t^{j-i})f(q^{\mu_i-\la_{j+1}}t^{j-i})},\\
&sk_{\la/\mu}(q,t)=Q_{\la/\mu}(\frac{1-q/t}{1-t})=t^{n(\la)-n(\mu)}\prod_{i,j=1}^{l(\la)}\frac{(qt^{j-i-1};q)_{\la_i-\mu_j}(qt^{j-i};q)_{\mu_i-\mu_j}}
{(qt^{j-i-1};q)_{\mu_i-\mu_j}(qt^{j-i};q)_{\la_i-\mu_j}},\\
&\psi'_{\la/\mu}(q,t)=\prod_{(i,j)}\frac{(1-q^{\mu_i-\mu_j}t^{j-i-1})(1-q^{\la_i-\la_j}t^{j-i+1})}{(1-q^{\mu_i-\mu_j}t^{j-i})(1-q^{\la_i-\la_j}t^{j-i})}
\end{align*}
where the product is taken over all pairs $(i,j)$ such that $i<j$ and $\la_i=\mu_i$, $\la_j=\mu_j+1$. Note that
$\varphi_{\la/\mu}(q,t)$ (resp. $\psi'_{\la/\mu}(q,t)$) is zero unless $\la/\mu$ is a horizontal (resp. vertical) strip.

First, by the tableaux realization of Macdonald polynomials, we have \cite[p. 345, (7.9)]{Mac}
\begin{align}
Q_{\la/\mu}(a-1;q,t)=\sum_{\mu\subset\nu\subset\la}Q_{\la/\nu}(a;q,t)Q_{\nu/\mu}(-1;q,t).
\end{align}
It is well known that \cite[p. 346, (7.14)]{Mac}
\begin{align}
Q_{\la/\nu}(a;q,t)=
\begin{cases}
\varphi_{\la/\nu}(q,t)a^{|\la/\nu|}&\text{if $\la/\nu$ is a horizontal strip},\\
0&\text{otherwise}.
\end{cases}
\end{align}
Whereas we have \cite{W}
\begin{align}
Q_{\nu/\mu}(-1;q,t)=\sum_{\eta}(-1)^{|\nu/\eta|}t^{|\eta/\mu|}\psi'_{\nu/\eta}(q,t)sk_{\eta/\mu}(q,t),
\end{align}
summed over all partitions $\eta$ such that $\mu\subset\eta\subset\nu$ and $\nu/\eta$ is a vertical strip.

Therefore, we have
\begin{align}\label{e:Q(a-1)}
Q_{\la/\mu}(a-1;q,t)=\sum_{(\eta,\nu)}(-1)^{|\nu/\eta|}a^{|\la/\nu|}t^{|\eta/\mu|}\psi'_{\nu/\eta}(q,t)\varphi_{\la/\nu}(q,t)sk_{\eta/\mu}(q,t),
\end{align}
summed over all pairs of partitions $(\eta,\nu)$ such that $\mu\subset\eta\subset\nu\subset\la$, $\la/\nu$ is a horizontal strip and $\nu/\eta$ is a vertical strip.

\subsection{An iterative formula for the $(q,t)$-Green polynomial}
For each partition $\la$ we need some statistics defined by
\begin{align*}
c_{\la}(q,t)=\prod_{s\in\la}(1-q^{a(s)}t^{l(s)+1}), \quad c'_{\la}(q,t)=\prod_{s\in\la}(1-q^{a(s)+1}t^{l(s)}), \quad c'_{\la}(q,t)=c_{\la'}(q,t)
\end{align*}
where, for a cell $s\in\la$, $a(s)$ and $l(s)$ represent respectively the arm and the leg of $s$ in $\la$, that is the number of cell of $\la$ that are respectively strictly east and north of $s$.

Define the integral form $J_{\la}(X;q,t)$ as
\begin{align}\label{e:J-def}
J_{\la}(X;q,t)=c_{\la}(q,t)P_{\la}(X;q,t)=c'_{\la}(q,t)Q_{\la}(X;q,t).
\end{align}

For each pair of partitions $\la$ and $\mu$, define $(q,t)$-Green polynomials $X^{\la}_{\mu}(q,t)$ by \cite[p. 356]{Mac}
\begin{align}
J_{\la}(X; q, t)=\sum_{\mu} z^{-1}_{\mu}X^{\la}_{\mu}(q,t)p_{\mu}(X;t)
\end{align}
where $p_{\mu}(X,t)=p_{\mu}(X)\prod_{i=1}^{l(\mu)}(1-t^{\mu_i})$.

By duality, we have
\begin{align}\label{e:pP}
p_{\mu}(X)=\sum_{\la}\frac{\prod_{i=1}^{l(\mu)}(1-q^{\mu_i})}{c'_{\la}(q,t)}X^{\la}_{\mu}(q,t)P_{\la}(X;q,t).
\end{align}

To describe the result we need some notations. For each partition $\la=(\la_1, \ldots, \la_l)$, we define that
\begin{align}
\la^{[i]}=(\la_{i+1}, \cdots, \la_l), \qquad i=0, 1, \ldots, l
\end{align}
So $\la^{[0]}=\la$ and $\la^{[l]}=\emptyset$.

\begin{thm}\label{t:iteGreen}
Let $\la$, $\mu$ be two partitions of $n$. Then we have
\begin{align}\label{e:iteGreen}
X^{\la}_{\mu}(q,t)=\sum_{\rho,\eta,\nu}\frac{c'_{\la}(q,t)(-1)^{|\nu/\eta|}|\la/\nu|}{c'_{\rho}(q,t)(1-t^{\mu_1})}t^{|\eta/\rho|}
\psi'_{\nu/\eta}(q,t)\varphi_{\la/\nu}(q,t)sk_{\eta/\rho}(q,t)X^{\rho}_{\mu^{[1]}}(q,t)
\end{align}
summed over all partitions $\rho,\eta,\nu$ such that $\la\supset\nu\supset\eta\supset\rho\vdash n-\mu_1$, $\la/\nu$ is a horizontal strip and $\nu/\eta$ is a vertical strip.
\end{thm}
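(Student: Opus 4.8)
The plan is to combine the duality expansion \eqref{e:pP} of a power sum with the Murnaghan-Nakayama rule of Corollary \ref{t:M-M-N2} in its $a\to1$ limit, and then to read off $X^{\la}_{\mu}(q,t)$ by comparing coefficients of $P_{\la}$. The starting point is the trivial factorization $p_{\mu}(X)=p_{\mu_1}(X)\,p_{\mu^{[1]}}(X)$. I would first expand the second factor by \eqref{e:pP}, obtaining
\begin{align*}
p_{\mu^{[1]}}(X)=\sum_{\rho\vdash n-\mu_1}\frac{\prod_{i=2}^{l(\mu)}(1-q^{\mu_i})}{c'_{\rho}(q,t)}X^{\rho}_{\mu^{[1]}}(q,t)P_{\rho}(X;q,t),
\end{align*}
so that the problem reduces to re-expanding each product $p_{\mu_1}(X)P_{\rho}(X;q,t)$ into the $P_{\la}$ basis.

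For this I would specialize Corollary \ref{t:M-M-N2} with $\mu=\rho$ and $k=\mu_1$ and pass to the limit $a\to1$. By \eqref{e:limit} the operator $\tilde g_{\mu_1}((a-1)X;q,t)$ tends to $\frac{1-t^{\mu_1}}{1-q^{\mu_1}}p_{\mu_1}(X)$, so \eqref{e:M-M-N2} yields
\begin{align*}
p_{\mu_1}(X)P_{\rho}(X;q,t)=\frac{1-q^{\mu_1}}{1-t^{\mu_1}}\sum_{\la\vdash|\rho|+\mu_1}\left(\lim_{a\to1}\frac{Q_{\la/\rho}(a-1;q,t)}{a-1}\right)P_{\la}(X;q,t).
\end{align*}
Here one notes that since $|\la|>|\rho|$ we have $\la\neq\rho$, whence $Q_{\la/\rho}(0;q,t)=0$; this guarantees that the limit exists and equals the derivative $\frac{d}{da}Q_{\la/\rho}(a-1;q,t)\big|_{a=1}$.

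The key computation is to evaluate this limit from the explicit formula \eqref{e:Q(a-1)}. Differentiating its right-hand side in $a$ and setting $a=1$ acts only through the factor $a^{|\la/\nu|}$, whose derivative at $a=1$ contributes $|\la/\nu|$; thus
\begin{align*}
\lim_{a\to1}\frac{Q_{\la/\rho}(a-1;q,t)}{a-1}=\sum_{(\eta,\nu)}(-1)^{|\nu/\eta|}|\la/\nu|\,t^{|\eta/\rho|}\psi'_{\nu/\eta}(q,t)\varphi_{\la/\nu}(q,t)sk_{\eta/\rho}(q,t),
\end{align*}
summed over $\rho\subset\eta\subset\nu\subset\la$ with $\la/\nu$ a horizontal strip and $\nu/\eta$ a vertical strip.

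Finally I would substitute the two expansions into $p_{\mu}=p_{\mu_1}p_{\mu^{[1]}}$, collect the coefficient of $P_{\la}(X;q,t)$, and match it against the coefficient prescribed by \eqref{e:pP} applied directly to $p_{\mu}$. The factor $\prod_{i=1}^{l(\mu)}(1-q^{\mu_i})$ cancels on both sides, the missing $(1-q^{\mu_1})$ being supplied by the prefactor $\frac{1-q^{\mu_1}}{1-t^{\mu_1}}$, leaving
\begin{align*}
\frac{X^{\la}_{\mu}(q,t)}{c'_{\la}(q,t)}=\sum_{\rho}\frac{X^{\rho}_{\mu^{[1]}}(q,t)}{c'_{\rho}(q,t)(1-t^{\mu_1})}\lim_{a\to1}\frac{Q_{\la/\rho}(a-1;q,t)}{a-1},
\end{align*}
which rearranges into \eqref{e:iteGreen}. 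The main obstacle is the bookkeeping in this last step: one must track the prefactors $\frac{1-q^{\mu_1}}{1-t^{\mu_1}}$, $c'_{\rho}(q,t)$ and the derived factor $|\la/\nu|$ carefully, and verify that the constraint $\rho\vdash n-\mu_1$ together with the strip conditions coming from the limit formula reproduces exactly the summation range in \eqref{e:iteGreen}.
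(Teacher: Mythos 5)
Your proposal is correct and follows essentially the same route as the paper's proof: expand $p_{\mu^{[1]}}$ via \eqref{e:pP}, multiply by $p_{\mu_1}$, convert $p_{\mu_1}P_{\rho}$ to the $P_{\la}$ basis through the $a\to1$ limit of Corollary \ref{t:M-M-N2}, compute the derivative of \eqref{e:Q(a-1)} at $a=1$, and compare coefficients of $P_{\la}$. Your explicit remark that $Q_{\la/\rho}(0;q,t)=0$ for $\la\neq\rho$ (so the limit is genuinely the derivative) is a small point the paper leaves implicit, but it is the identical argument.
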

\begin{proof}
By \eqref{e:pP}
\begin{align*}
p_{\mu^{[1]}}(X)=\sum_{\rho}\frac{\prod_{i=2}^{l(\mu)}(1-q^{\mu_i})}{c'_{\rho}(q,t)}X^{\rho}_{\mu^{[1]}}(q,t)P_{\rho}(X;q,t).
\end{align*}
Multiplying both sides by $p_{\mu_1}(X)$ implies that
\begin{align}\label{e:ppP}
p_{\mu}(X)=\sum_{\rho}\frac{\prod_{i=2}^{l(\mu)}(1-q^{\mu_i})}{c'_{\rho}(q,t)}X^{\rho}_{\mu^{[1]}}(q,t)p_{\mu_1}(X)P_{\rho}(X;q,t).
\end{align}
Combining \eqref{e:limit} and \eqref{e:M-M-N2}, we have
\begin{align*}
p_{\mu_1}(X)P_{\rho}(X;q,t)=\sum_{\la\vdash |\rho|+\mu_1}\frac{1-q^{\mu_1}}{1-t^{\mu_1}}\frac{\partial Q_{\la/\rho}(a-1;q,t)}{\partial a}\mid_{a=1}
P_{\la}(X;q,t),
\end{align*}
by which \eqref{e:ppP} can be rewritten as
\begin{align*}
p_{\mu}(X)=\sum_{\la,\rho}\frac{\prod_{i=1}^{l(\mu)}(1-q^{\mu_i})}{c'_{\rho}(q,t)(1-t^{\mu_1})}\frac{\partial Q_{\la/\rho}(a-1;q,t)}{\partial a}\mid_{a=1}X^{\rho}_{\mu^{[1]}}(q,t)P_{\la}(X;q,t).
\end{align*}
Note that we have
\begin{align*}
p_{\mu}(X)=\sum_{\la}\frac{\prod_{i=1}^{l(\mu)}(1-q^{\mu_i})}{c'_{\la}(q,t)}X^{\la}_{\mu}(q,t)P_{\la}(X;q,t).
\end{align*}
Since $\{P_{\la}(X;q,t)\}$ forms a basis, we derive that
\begin{align}\label{e:XX}
X^{\la}_{\mu}(q,t)=\sum_{\rho}\frac{c'_{\la}(q,t)}{c'_{\rho}(q,t)(1-t^{\mu_1})}\frac{\partial Q_{\la/\rho}(a-1;q,t)}{\partial a}\mid_{a=1}X^{\rho}_{\mu^{[1]}}(q,t).
\end{align}
Using \eqref{e:Q(a-1)}, we have
\begin{align}\label{e:partialQ}
\frac{\partial Q_{\la/\rho}(a-1;q,t)}{\partial a}\mid_{a=1}=\sum_{(\eta,\nu)}(-1)^{|\nu/\eta|}|\la/\nu|t^{|\eta/\rho|}\psi'_{\nu/\eta}(q,t)\varphi_{\la/\nu}(q,t)sk_{\eta/\rho}(q,t),
\end{align}
summed over all pairs of partitions $(\eta,\nu)$ such that $\rho\subset\eta\subset\nu\subset\la$, $\la/\nu$ is a horizontal strip and $\nu/\eta$ is a vertical strip.
Finally, substituting \eqref{e:partialQ} into \eqref{e:XX} gives \eqref{e:iteGreen}.
\end{proof}

We remark that there were
two iterative formulae for the Green polynomials $X^{\la}_{\mu}(t)$ with the help of the vertex operator realization of Hall-Littlewood functions. One is on $\la$ \cite[Theorem 2.6]{JL1}, the other is on $\mu$ \cite[Theorem 3.2]{JL1}. The one on $\mu$ can be regarded as the specialization of \eqref{e:iteGreen} ($q=0$).

It is known that the $(q, t)$-Kostka polynomials can be expressed in terms of the $(q, t)$-Green polynomials \cite[(8.20')]{Mac}:
\begin{equation}
K_{\mu\la}(q, t)=\sum_{\rho}z_{\rho}^{-1}\chi^{\mu}_{\rho}X^{\la}_{\rho}(q, t),
\end{equation}
where $\chi^{\mu}_{\rho}$ is the irreducible character $\chi^{\mu}$ of the symmetric group $\mathfrak S_n$ at the conjugacy of type $\rho$. Therefore
our formula \eqref{e:iteGreen} indirectly gives an iterative formula for the $(q, t)$-Kostka polynomials. We will return to derive two direct iterative formulae
for the $K_{\la\mu}(q, t)$ in Section \ref{s:Kostka}.

\section{Special cases of Corollary \ref{t:M-M-N2} }
In this section, we will consider some special cases of \eqref{e:M-M-N2}. We want to show how special cases of our rule recover various existing formulae. Particularly the special case for Schur functions (resp. Schur's $Q$-functions) is exactly the Murnaghan-Nakayama rule for the Hecke algebra (resp. Hecke-Clifford algebra), which explains why we call \eqref{e:M-M-N2} the Murnaghan-Nakayama rule for Macdonald functions.
\subsection{Hall-Littlewood functions}
First we introduce some needful notations. Recall the conjugate partition of $\la$ is denoted by $\la'$ and we write $m_{i}(\la)$ for the number of parts of $\la$ equal to $i$. The $q$-binomial coefficient is defined by
\begin{align*}
\left[\begin{matrix}a\\b\end{matrix}\right]_q=\frac{(q^{a-b+1}; q)_{b}}{(q; q)_b}=\frac{(1-q^a)(1-q^{a-1})\cdots(1-q^{a-b+1})}{(1-q^b)(1-q^{b-1})\cdots(1-q)}
\end{align*}
and is a polynomial in $q$ that reduces to the classical binomial coefficient $\left(\begin{matrix}a\\b\end{matrix}\right)$
when $q\to 1$.

It follows from the notations in the previous section that
\begin{align*}
\psi'_{\la/\mu}(t)&:=\psi'_{\la/\mu}(0,t)=\prod_{j\geq1}\left[\begin{matrix}\la'_j-\la_{j+1}'\\\la_{j}'-\mu_{j}'\end{matrix}\right]_t\\
sk_{\la/\mu}(t)&:=sk_{\la/\mu}(0,t)=t^{n(\la/\mu)}\prod_{j\geq1}
\left[\begin{matrix}\la'_j-\mu_{j+1}'\\m_{j}(\mu)\end{matrix}\right]_t
\end{align*}
Therefore,
\begin{align}
Q_{\la/\mu}(-1;t):=Q_{\la/\mu}(-1;0,t)=t^{|\la/\mu|}\sum_{\nu}(-t)^{-|\la/\nu|}\psi'_{\la/\nu}(t)sk_{\nu/\mu}(t),
\end{align}
summed over all partitions $\nu$ such that $\mu\subset\nu\subset\la$ and $\la/\nu$ is a vertical strip.

Now let us factorize $Q_{\la/\mu}(-1;t)$. Let $a_j=\la'_j-max(\mu'_j, \la'_{j+1})\geq0.$ Actually, $a_j$ is the number of those cells, which belong to $\la/\mu$ and are made of the rightmost boxes in the $j$th column in $\la$.  A partition $\nu$, $\mu\subset\nu\subset\la$, for which $\la/\nu$ is a vertical strip is obtained by choosing $k_j$ , $0 \leq k_j \leq a_j$ , and removing $k_j$ bottom cells of column $j$ in $\la$. See Fig. 1 for the example for $\la = (9,9,9,5,4,2,2)$ and $\mu=(5,5,4,3,1,1,1)$, where $a_2 = 2$, $a_4 = 1$, $a_5 = 1$, $a_9 = 3$, and $a_i = 0$ for all other $i$.
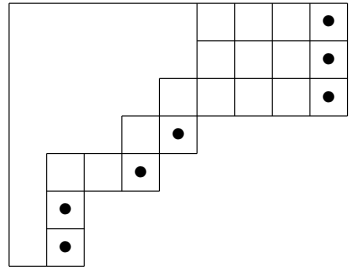
\begin{figure}
\begin{tikzpicture}[scale=0.5]
    \coordinate (Origin)   at (0,0);
    \coordinate (XAxisMin) at (0,0);
    \coordinate (XAxisMax) at (9,0);
    \coordinate (YAxisMin) at (0,0);
    \coordinate (YAxisMax) at (0,-9);
    \draw [thin, black] (4,0) -- (8,0);
    \draw [thin, black] (4,-1) --(8,-1);
    \draw [thin, black] (3,-2) -- (8,-2);
    \draw [thin, black] (2,-3) -- (8,-3);
    \draw [thin, black] (0,-4) -- (4,-4);
    \draw [thin, black] (0,-5) -- (3,-5);
    \draw [thin, black] (0,-6) -- (1,-6);
    \draw [thin, black] (0,-7) -- (1,-7);
     \draw [thin, black] (1,-4) -- (1,-7);
     \draw [thin, black] (2,-3) -- (2,-5);
     \draw [thin, black] (3,-2) -- (3,-5);
     \draw [thin, black] (4,0) -- (4,-4);
      \draw [thin, black] (5,0) -- (5,-3);
     \draw [thin, black] (6,0) -- (6,-3);
     \draw [thin, black] (7,0) -- (7,-3);
     \draw [thin, black] (8,0) -- (8,-3);
      \draw [thin, black] (0,-4) -- (0,-7);
      \node[inner sep=2pt] at (2.5,-4.5) {$\bullet$};
      \node[inner sep=2pt] at (0.5,-6.5) {$\bullet$};
      \node[inner sep=2pt] at (0.5,-5.5) {$\bullet$};
      \node[inner sep=2pt] at (3.5,-3.5) {$\bullet$};
      \node[inner sep=2pt] at (7.5,-2.5) {$\bullet$};
      \node[inner sep=2pt] at (7.5,-1.5) {$\bullet$};
      \node[inner sep=2pt] at (7.5,-0.5) {$\bullet$};
      \draw [thin, black] (-1,-7) -- (0,-7);
      \draw [thin, black] (-1,-7) -- (-1,0);
      \draw [thin, black] (-1,0) -- (4,0);
     \end{tikzpicture}
    \caption{A partition $\nu$ ($\mu\subset\nu\subset\la$) for which $\la/\nu$ is a
vertical strip within $\la/\mu$ is built
from $\la$ by removing some
number of the cells with bullets
of [$\la$].}
    \end{figure}

We have $|\la/\nu| = \sum_j k_j$ , $\nu'_{j}=\la'_j-k_j$. The choices of the $k_j$ are independent,
which means that
\begin{align}\label{e:Q(-1)}
\begin{split}
Q_{\la/\mu}(-1;t)&=t^{|\la/\mu|}\sum_{\nu}(-t)^{-|\la/\nu|}\psi'_{\la/\nu}(t)sk_{\nu/\mu}(t)\\
&=t^{|\la/\mu|}\sum_{k_1,k_2,\cdots}(-t)^{-\sum_j k_j}t^{n(\nu/\mu)}\prod_{j\geq1}
\left[\begin{matrix}\la'_j-\la_{j+1}'\\\la_{j}'-\nu_{j}'\end{matrix}\right]_t\prod_{j\geq1}
\left[\begin{matrix}\nu'_j-\mu_{j+1}'\\m_{j}(\mu)\end{matrix}\right]_t\\
&=t^{|\la/\mu|}\prod_j\left(\sum_{k_j=0}^{a_j}(-t)^{-k_j}t^{\left(\begin{matrix}\la_j'-\mu_{j}'-k_j\\2\end{matrix}\right)}
\left[\begin{matrix}m_j(\la)\\k_j\end{matrix}\right]_t\left[\begin{matrix}\la'_j-\mu_{j+1}'-k_j\\m_j(\mu)\end{matrix}\right]_t\right).
\end{split}
\end{align}

\subsection{Schur functions}
Recall a skew diagram $\theta$ is called a {\it border strip} if it is connected and contains no $2\times 2$ blocks (see \cite[p.5]{Mac}). A skew diagram is a {\it generalized border strip} if
it is a union of connected border strips.
Any generalized border strip is a union of its connected components, each of which is a border strip. A $k$-generalized border strip is a generalized border strip with $k$ boxes. In the example below, $\lambda=(4,3,3,1),$ $\mu=(3,2,1),$ the generalized border strip $\lambda/\mu$ has three border strips. It's a 5-generalized border strip.
\begin{gather*}
  \centering
\begin{tikzpicture}[scale=0.6]
   \coordinate (Origin)   at (0,0);
    \coordinate (XAxisMin) at (0,0);
    \coordinate (XAxisMax) at (4,0);
    \coordinate (YAxisMin) at (0,-3);
    \coordinate (YAxisMax) at (0,0);
\draw [thin, black] (0,0) -- (4,0);
    \draw [thin, black] (0,-1) -- (4,-1);
    \draw [thin, black] (0,-2) -- (3,-2);
    \draw [thin, black] (0,-3) -- (3,-3);
    \draw [thin, black] (0,-4) -- (1,-4);
    \draw [thin, black] (0,0) -- (0,-4);
    \draw [thin, black] (1,0) -- (1,-4);
    \draw [thin, black] (2,0) -- (2,-3);
    \draw [thin, black] (3,0) -- (3,-3);
    \draw [thin, black] (4,0) -- (4,-1);
    \filldraw[fill = gray][ultra thick]
    (3,0) rectangle (4,-1) (2,-1)rectangle(3,-2) (2,-2)rectangle(3,-3) (1,-2)rectangle(2,-3) (0,-3)rectangle(1,-4);
    \end{tikzpicture}
\end{gather*}
If the generalized border strip $\theta=\la/\mu$ has $m$ connected components $(\xi_1,\xi_2,\ldots,\xi_m)$, we define the weight of $\theta$ by
$$wt(\theta;t)= (t-1)^{m-1}\prod\limits_{i=1}^{m}(-1)^{r(\xi_{i})-1}t^{c(\xi_i)-1}.$$
where the $r(\xi_i)$ (resp. $c(\xi_i)$) denotes the number of rows (resp. columns) in the border strip $\xi_i$. Usually we refer $\theta$ as a generalized border strip to emphasize the non-connectedness of $\theta$.

We have the following properties of Schur functions (in $\la$-ring notation) \cite[(6.2)]{HR}:
\begin{align}
\begin{split}
s_{\la/\mu}(X+Y)&=\sum_{\mu\subset\nu\subset\la}s_{\la/\nu}(X)s_{\nu/\mu}(Y) \quad \text{(sum rule)}\\
s_{\la/\mu}(-X)&=(-1)^{|\la/\mu|}s_{\la'/\mu'}(X) \quad \text{(duality rule)}\\
s_{\la/\mu}(zX)&=z^{|\la/\mu|}s_{\la/\mu}(X) \quad \text{(homogeneity)}.
\end{split}
\end{align}

The following result is similar to \cite[(6.7)]{HR}. We furnish a proof for completeness.
\begin{lem}\label{t:wt1}
In $\la$-ring natation, we have
\begin{align}
s_{\la/\mu}(a-1)=
\begin{cases}
(a-1)wt(\la/\mu;a) & \text{if $\la/\mu$ is a generalized border strip};\\
0 & \text{otherwise}.
\end{cases}
\end{align}
\end{lem}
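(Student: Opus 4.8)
The plan is to read $a-1$ in the $\la$-ring as the one-letter alphabet $\{a\}$ added to the anti-alphabet $\{-1\}$, and then to reduce the evaluation to two elementary specializations via the sum rule. Applying the sum rule with $X=a$ and $Y=-1$ gives
\begin{align*}
s_{\la/\mu}(a-1)=\sum_{\mu\subset\nu\subset\la}s_{\la/\nu}(a)\,s_{\nu/\mu}(-1).
\end{align*}
A one-letter alphabet admits only tableaux with constant filling, so by homogeneity $s_{\la/\nu}(a)=a^{|\la/\nu|}$ when $\la/\nu$ is a horizontal strip and vanishes otherwise; dually, the duality rule yields $s_{\nu/\mu}(-1)=(-1)^{|\nu/\mu|}s_{\nu'/\mu'}(1)$, which equals $(-1)^{|\nu/\mu|}$ when $\nu/\mu$ is a vertical strip and is $0$ otherwise. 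Hence
\begin{align*}
s_{\la/\mu}(a-1)=\sum_{\nu}(-1)^{|\nu/\mu|}a^{|\la/\nu|},
\end{align*}
the sum running over those $\nu$ for which $\nu/\mu$ is a vertical strip and $\la/\nu$ a horizontal strip; concretely, in each row the cells of $\nu/\mu$ form the left-hand end and in each column the cells of $\la/\nu$ form the bottom end.

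Next I would exploit the geometry of $\la/\mu$. Because the cells of a skew diagram in any fixed row (resp. column) are contiguous, distinct connected components occupy pairwise disjoint sets of rows and of columns; consequently $s_{\la/\mu}=\prod_i s_{\xi_i}$ over the connected components $\xi_1,\dots,\xi_m$, and this factorization survives the specialization since $\la$-ring evaluation is a ring homomorphism. It therefore suffices to treat a single connected component $\xi$. I would first prove the vanishing: if $\xi$ contains a $2\times2$ block on rows $i,i+1$ and columns $j,j+1$, no admissible $\nu$ exists. Indeed, were the top-left cell $(i,j)$ in the horizontal part $\la/\nu$, then the cell $(i+1,j)$ directly below it would also have to lie in $\la/\nu$ (the $\la/\nu$-cells of a column being its bottom ones), giving two cells of a horizontal strip in column $j$; so $(i,j)$ lies in $\nu/\mu$. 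Being the unique $\nu/\mu$-cell of its row forces $(i,j+1)$ into $\la/\nu$, whence $(i+1,j+1)$ lies in $\la/\nu$ as well, again producing two cells in column $j+1$ — a contradiction. Thus $s_\xi(a-1)=0$ whenever $\xi$ is not a border strip, so $s_{\la/\mu}(a-1)=0$ unless $\la/\mu$ is a generalized border strip.

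It then remains to evaluate $s_\xi(a-1)$ for a connected border strip $\xi$ with $r(\xi)$ rows and $c(\xi)$ columns and to reassemble the product; I expect this single-ribbon computation to be the main obstacle. The cleanest route is induction on the number of cells $r(\xi)+c(\xi)-1$, peeling the extreme cell of the ribbon (the rightmost cell of the top row) and splitting into two cases according to whether that cell is absorbed into $\la/\nu$ or — only when its row is a singleton — into $\nu/\mu$; the base cases are the single row $s_{(c)}(a-1)=(a-1)a^{c-1}$ and the single column $s_{(1^r)}(a-1)=(a-1)(-1)^{r-1}$, both read off directly from the signed sum above. This yields $s_\xi(a-1)=(a-1)(-1)^{r(\xi)-1}a^{c(\xi)-1}$, i.e. $(a-1)\,wt(\xi;a)$ in the connected case. (Equivalently one may invoke the Jacobi--Trudi determinant with $h_n(a-1)=(a-1)a^{n-1}$ for $n\geq1$, whose generating series is $\tfrac{1-z}{1-az}$.) Multiplying over the $m$ components and comparing with the definition of the weight,
\begin{align*}
s_{\la/\mu}(a-1)=\prod_{i=1}^m(a-1)(-1)^{r(\xi_i)-1}a^{c(\xi_i)-1}=(a-1)(a-1)^{m-1}\prod_{i=1}^m(-1)^{r(\xi_i)-1}a^{c(\xi_i)-1}=(a-1)\,wt(\la/\mu;a),
\end{align*}
which is the claimed identity. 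The delicate points to get right are the precise admissibility conditions on $\nu$ (that $\nu/\mu$ occupies the left end of each row and $\la/\nu$ the bottom end of each column) and the sign/exponent bookkeeping in the inductive peeling; everything else is formal.
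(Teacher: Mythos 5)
Your proof is correct, and its first half is identical to the paper's: both apply the sum and duality rules together with the one-letter evaluation (nonzero only on horizontal strips) to reduce $s_{\la/\mu}(a-1)$ to the signed sum $\sum_{\nu}(-1)^{|\nu/\mu|}a^{|\la/\nu|}$ over $\nu$ with $\nu/\mu$ a vertical strip and $\la/\nu$ a horizontal strip. The two arguments part ways in how this sum is evaluated. The paper works globally on $\la/\mu$: each box is labelled according to whether it has a box of $\la/\mu$ directly below it (then it is forced into $\nu/\mu$, weight $-1$), immediately to its left (forced into $\la/\nu$, weight $a$), or neither (exactly one such box per connected component, free, weight $a+(-1)=a-1$), so the sum factors in one pass into $(a-1)^{m}\prod_{i}(-1)^{r(\xi_i)-1}a^{c(\xi_i)-1}$; the vanishing outside generalized border strips is only indicated by a figure. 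You instead factor $s_{\la/\mu}=\prod_i s_{\xi_i}$ over connected components (legitimate, since distinct components share no row or column and evaluation at an alphabet is a ring homomorphism), prove the $2\times 2$ obstruction explicitly using the same forcing observations, and compute a single ribbon by peeling induction (or Jacobi--Trudi). What each route buys: yours makes the vanishing claim fully rigorous where the paper gestures at a picture, at the cost of importing the component-factorization lemma and the inductive bookkeeping (partition shape of the peeled diagram, bijection of admissible $\nu$) that you rightly flag as the delicate points; the paper's box classification needs neither induction nor factorization, gets the closed product at once, and, read closely, also yields the vanishing, since in a $2\times 2$ block the top-right box would be forced into $\nu/\mu$ and into $\la/\nu$ simultaneously.
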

\begin{proof}
By the sum rule and the duality rule,
\begin{align*}
s_{\la/\mu}(a-1)&=\sum_{\mu\subset\nu\subset\la}s_{\la/\nu}(a)s_{\nu/\mu}(-1)\\
&=\sum_{\mu\subset\nu\subset\la}s_{\la/\nu}(a)(-1)^{|\nu/\mu|}s_{\nu'/\mu'}(1).
\end{align*}
By \cite[p.71 (5.8)]{Mac}, for a singleton alphabet $x$, we have $s_{\rho/\tau}(x)=0$ unless $\rho/\tau$ is a horizontal strip, in which case $s_{\rho/\tau}(x)=x^{|\rho/\tau|}.$

Therefore,
\begin{align*}
s_{\la/\mu}(a-1)=\sum_{\mu\subset\nu\subset\la}a^{|\la/\nu|}(-1)^{|\nu/\mu|}.
\end{align*}
summed over all partitions $\nu$ such that $\la/\nu$ is a horizontal strip and $\nu/\mu$ is a vertical strip.

Hence $s_{\la/\mu}(a-1)=0$ unless $\la$ is obtained by the following method: first a vertical strip is added to $\mu$ to get $\nu$ and then a horizontal strip is added to $\nu$ to get $\la$ (see Fig. 2). That is, $s_{\la/\mu}(a-1)=0$ unless $\la/\mu$ is a generalized border strip.

Now we suppose $\la/\mu$ is a generalized border strip. Then each box in $\la/\mu$ satisfies one of the following:\\
(i) There is a box of $\la/\mu$ immediately below it (labeled by $v$);\\
(ii) There is a box of $\la/\mu$ immediately to its left (labeled by $h$);\\
(iii) Neither (i) nor (ii) holds (labeled by $*$).

As we observe, the following facts hold\\
(F1) Each box in case (i) (resp. (ii)) must come from the application of the vertical (resp. horizontal) strip to $\mu$ and thus this box has weight $-1$ (resp. $a$);\\
(F2) Each connected component (border strip) $\xi_i$ in $\la/\mu$ has $r(\xi_i)-1$ (resp. $c(\xi_i)-1$) boxes in case (i) (resp. (ii));\\
(F3) Each connected component (border strip) in $\la/\mu$ has exactly one box in case (iii), in which case this box has weight $a-1$.

Note that the product of these weights is exactly $(a-1)^{m}\prod\limits_{i=1}^{m}(-1)^{r(\xi_{i})-1}t^{c(\xi_i)-1}$, where $m$ is the number of connected components in $\la/\mu$ and $\xi_1,\cdots,\xi_m$ are connected components.
\begin{figure}
\begin{tikzpicture}[scale=0.5]
    \coordinate (Origin)   at (0,0);
    \coordinate (XAxisMin) at (0,0);
    \coordinate (XAxisMax) at (14,0);
    \coordinate (YAxisMin) at (0,0);
    \coordinate (YAxisMax) at (0,-14);
    \draw [thin, black] (0,0) -- (14,0);
    \draw [thin, black] (9,-1) --(14,-1);
    \draw [thin, black] (4,-2) -- (9,-2);
    \draw [thin, black] (4,-3) -- (8,-3);
    \draw [thin, black] (4,-4) -- (5,-4);
    \draw [thin, black] (4,-5) -- (5,-5);
    \draw [thin, black] (0,-6) -- (5,-6);
    \draw [thin, black] (0,-7) -- (1,-7);
     \draw [thin, black] (0,-8) -- (1,-8);
     \draw [thin, black] (0,-9) -- (1,-9);
     \draw [thin, black] (0,0) -- (0,-9);
     \draw [thin, black] (1,-6) -- (1,-9);
      \draw [thin, black] (4,-2) -- (4,-6);
     \draw [thin, black] (5,-2) -- (5,-6);
     \draw [thin, black] (6,-2) -- (6,-3);
     \draw [thin, black] (7,-2) -- (7,-3);
      \draw [thin, black] (8,-2) -- (8,-3);
      \draw [thin, black] (9,0) -- (9,-2);
      \draw [thin, black] (10,0) -- (10,-1);
      \draw [thin, black] (11,0) -- (11,-1);
      \draw [thin, black] (12,0) -- (12,-1);
      \draw [thin, black] (13,0) -- (13,-1);
      \draw [thin, black] (14,0) -- (14,-1);
      \node[inner sep=2pt] at (10.5,-0.5) {$h$};
      \node[inner sep=2pt] at (11.5,-0.5) {$h$};
      \node[inner sep=2pt] at (12.5,-0.5) {$h$};
      \node[inner sep=2pt] at (13.5,-0.5) {$h$};
      \node[inner sep=2pt] at (5.5,-2.5) {$h$};
      \node[inner sep=2pt] at (6.5,-2.5) {$h$};
      \node[inner sep=2pt] at (7.5,-2.5) {$h$};
      \node[inner sep=2pt] at (4.5,-3.5) {$v$};
      \node[inner sep=2pt] at (4.5,-4.5) {$v$};
      \node[inner sep=2pt] at (4.5,-5.5) {$v$};
      \node[inner sep=2pt] at (0.5,-6.5) {$v$};
      \node[inner sep=2pt] at (0.5,-7.5) {$v$};
      \node[inner sep=2pt] at (9.5,-0.5) {$*$};
      \node[inner sep=2pt] at (4.5,-2.5) {$*$};
      \node[inner sep=2pt] at (0.5,-8.5) {$*$};
     \end{tikzpicture}
    \caption{$\la$ is obtained from $\mu$ by adding a vertical strip (to get $\nu$) and then adding a horizontal strip (to $\nu$).}
    \end{figure}
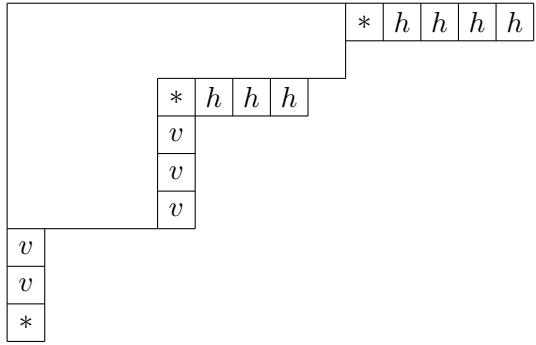
\end{proof}

Let $q=t=0$, we obtain symmetric functions $\tilde{g}_r(a-1;X;0,0)$ (denoted by $\tilde h_r(a)$) defined by its generating function:
\begin{align}
(a-1)\sum_{r\geq0}\tilde{h}_r(a)z^{r}=\prod_{i\geq1}\frac{1-x_iz}{1-ax_iz}.
\end{align}
Denote $\tilde{h}_{\la}(a)=\tilde{h}_{\la_1}(a)\tilde{h}_{\la_2}(a)\cdots$.

Ram \cite{Ram} used the quantum Schur-Weyl duality to prove the Frobenius type character formula
for the Hecke algebra $\mathcal H_n$ in terms of one-row Hall-Littlewood functions and Schur symmetric functions (see also \cite{KV, KW}), which says that
\begin{align}
\tilde{h}_{\mu}(q)=\sum\limits_{\lambda\vdash n}\chi^{\lambda}_{\mu}(q)s_{\lambda}.
\end{align}
where $\chi^{\la}_{\mu}(q)$ is the irreducible character of the Hecke algebra $H_n(q)$ indexed by $\la$ and evaluated at a certain
element $T_{\mu}$.

The following Murnaghan-Nakayama rule (see \cite{Ram, Hal, JL2}) for $H_n(q)$ is a special case of \eqref{e:M-M-N2}.
\begin{cor}
For each $r\geq0$ and each partition $\mu$,
\begin{align}
\tilde{h}_r(q)s_{\mu}=\sum_{\la}wt(\la/\mu;q)s_{\la}
\end{align}
summed over all partitions $\la$ such that $\la/\mu$ is an $r$-generalized border strip.
\end{cor}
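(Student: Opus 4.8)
The plan is to obtain this formula as the $q=t=0$ specialization of Corollary \ref{t:M-M-N2}, using Lemma \ref{t:wt1} to evaluate the resulting coefficients. First I would set $q=t=0$ in \eqref{e:M-M-N2} and track how each piece degenerates. For the operator on the left, I would use the $\la$-ring expression $g_k(X;q,t)=h_k\!\left(\frac{1-t}{1-q}X\right)$: at $q=t=0$ the prefactor $\frac{1-t}{1-q}$ becomes $1$, so $g_k((a-1)X;0,0)=h_k((a-1)X)$. A short generating-function computation then confirms that $\tilde g_r((a-1)X;0,0)=\frac{1}{a-1}g_r((a-1)X;0,0)$ agrees with $\tilde h_r(a)$, since $\sum_r h_r((a-1)X)z^r=\prod_i\frac{1-x_iz}{1-ax_iz}$, which is exactly the defining series for $\tilde h_r(a)$.

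Next I would handle the Macdonald polynomials. Setting $q=t$ collapses everything to the Schur case: $P_\la(X;0,0)=Q_\la(X;0,0)=s_\la$, and correspondingly the skew function degenerates to the skew Schur function, $Q_{\la/\mu}(Y;0,0)=s_{\la/\mu}(Y)$ as an element of the $\la$-ring. Substituting these three specializations into \eqref{e:M-M-N2} produces
\[
\tilde h_r(a)\,s_\mu=\sum_{\la\vdash n+r}\frac{s_{\la/\mu}(a-1)}{a-1}\,s_\la .
\]
At this point I would invoke Lemma \ref{t:wt1}, which says $\frac{s_{\la/\mu}(a-1)}{a-1}=wt(\la/\mu;a)$ whenever $\la/\mu$ is a generalized border strip and vanishes otherwise. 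Since the outer summation already imposes $|\la/\mu|=r$, the surviving terms are precisely those $\la$ for which $\la/\mu$ is an $r$-generalized border strip. Renaming the indeterminate $a$ as $q$ then gives the stated identity.

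The hard part will not be the algebra but the justification of the second specialization, namely that the \emph{skew} Macdonald $Q$-function reduces to the skew Schur function at $q=t=0$ in the $\la$-ring sense. I would argue this carefully: putting $q=t$ turns the deformed inner product \eqref{e:innerprod} into the ordinary Hall inner product (and $p_n^{\bot}$ into $n\,\partial/\partial p_n$), so the structure constants $f^{\la}_{\mu\nu}(q,t)$ defining $Q_{\la/\mu}$ via \eqref{e:la/mu} become the classical Littlewood--Richardson coefficients; hence $Q_{\la/\mu}(Y;0,0)=s_{\la/\mu}(Y)$ holds as virtual-alphabet evaluations, which is what makes the passage from \eqref{e:M-M-N2} to the Schur identity legitimate. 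Once this identification is secured, the remainder is routine bookkeeping and the combinatorial interpretation supplied by Lemma \ref{t:wt1}.
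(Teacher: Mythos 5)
Your proposal is correct and follows exactly the paper's own route: the paper proves this corollary by specializing Corollary \ref{t:M-M-N2} at $q\to 0$, $t\to 0$ and invoking Lemma \ref{t:wt1}, which is precisely what you do. Your additional verifications --- that $\tilde g_r((a-1)X;0,0)$ has the defining generating series of $\tilde h_r(a)$, and that $Q_{\la/\mu}(\,\cdot\,;0,0)=s_{\la/\mu}(\,\cdot\,)$ because the structure constants $f^{\la}_{\mu\nu}(q,t)$ degenerate to Littlewood--Richardson coefficients at $q=t$ --- are sound and simply make explicit the specializations the paper leaves implicit.
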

\begin{proof} The result follows from Corollary \ref{t:M-M-N2} with $q\to 0, t\to 0$ and together with Lemma \ref{t:wt1}.
\end{proof}

\subsection{Schur's $Q$-functions}
A partition is called strict if its parts are distinct. For a strict partition $\lambda$, its {\it shifted diagram} $\lambda^*$ is obtained from the ordinary Young diagram by shifting the $k$th row to the right by $k-1$ squares, for each $k=1,2,\cdots,l(\la)$. A shifted skew diagram is a {\it shifted border strip} if its shifted diagram is connected and has no $2\times 2$ blocks. Given two strict partitions $\mu\subset\la$. We call $\la/\mu$ a {\it generalized strip} if the shifted skew diagram $\la^*/\mu^*$ has no $2\times 2$ block of squares. Particularly a connected generalized strip is exactly a shifted border strip. A skew diagram $\la/\mu$ is called a {\it double strip} if it is a union of two shifted border strips that both end on the main diagonal. A double strip can be cut into two non-empty connected pieces, one piece consisting of the diagonals of length $2$, and other piece consisting of the shifted border strip formed by the diagonals of length $1$. The following is an example for $\la=(6,5,4,3,2)$ and $\mu=(5,4,2)$. The boxes occupied by two red lines form respectively two shifted border strips. The piece of length $2$ (resp. $1$) is composed of all boxes marked by $\ast$' (resp. $\bullet$').
\begin{gather*}
  \centering
\begin{tikzpicture}[scale=0.6]
   \coordinate (Origin)   at (0,0);
    \coordinate (XAxisMin) at (0,0);
    \coordinate (XAxisMax) at (6,0);
    \coordinate (YAxisMin) at (0,5);
    \coordinate (YAxisMax) at (0,0);
    \draw [thin, black] (0,4) -- (0,5);
    \draw [thin, black] (1,3) -- (1,5);
    \draw [thin, black] (2,2) -- (2,5);
    \draw [thin, black] (3,1) -- (3,5);
    \draw [thin, black] (4,0) -- (4,5);
    \draw [thin, black] (5,0) -- (5,5);
    \draw [thin, black] (6,0) -- (6,5);
    \draw [thin, black] (4,0) -- (6,0);
    \draw [thin, black] (3,1) -- (6,1);
    \draw [thin, black] (2,2) -- (6,2);
    \draw [thin, black] (1,3) -- (6,3);
    \draw [thin, black] (0,4) -- (6,4);
    \draw [thin, black] (0,5) -- (6,5);
    \draw [thin, red] (3.5,1.5) -- (4.5,1.5);
    \draw [thin, red] (4.5,1.5) -- (4.5,2.5);
    \draw [thin, red] (4.5,2.5) -- (5.5,2.5);
    \draw [thin, red] (5.5,2.5) -- (5.5,4.5);
    \draw [thin, red] (4.5,0.5) -- (5.5,0.5);
    \draw [thin, red] (5.5,0.5) -- (5.5,1.5);
\end{tikzpicture}\qquad\qquad\qquad\qquad
\begin{tikzpicture}[scale=0.6]
   \coordinate (Origin)   at (0,0);
    \coordinate (XAxisMin) at (0,0);
    \coordinate (XAxisMax) at (6,0);
    \coordinate (YAxisMin) at (0,5);
    \coordinate (YAxisMax) at (0,0);
    \draw [thin, black] (0,4) -- (0,5);
    \draw [thin, black] (1,3) -- (1,5);
    \draw [thin, black] (2,2) -- (2,5);
    \draw [thin, black] (3,1) -- (3,5);
    \draw [thin, black] (4,0) -- (4,5);
    \draw [thin, black] (5,0) -- (5,5);
    \draw [thin, black] (6,0) -- (6,5);
    \draw [thin, black] (4,0) -- (6,0);
    \draw [thin, black] (3,1) -- (6,1);
    \draw [thin, black] (2,2) -- (6,2);
    \draw [thin, black] (1,3) -- (6,3);
    \draw [thin, black] (0,4) -- (6,4);
    \draw [thin, black] (0,5) -- (6,5);
    \node[inner sep=2pt] at (3.5,1.5) {$\ast$};
    \node[inner sep=2pt] at (4.5,1.5) {$\ast$};
    \node[inner sep=2pt] at (4.5,2.5) {$\ast$};
    \node[inner sep=2pt] at (5.5,1.5) {$\ast$};
    \node[inner sep=2pt] at (5.5,0.5) {$\ast$};
    \node[inner sep=2pt] at (4.5,0.5) {$\ast$};
    \node[inner sep=2pt] at (5.5,2.5) {$\bullet$};
    \node[inner sep=2pt] at (5.5,3.5) {$\bullet$};
    \node[inner sep=2pt] at (5.5,4.5) {$\bullet$};
\end{tikzpicture}
\end{gather*}
More generally, we call $\la/\mu$ a {\it generalized double strip} if there exists a strict partition $\mu\subset\nu\subset\la$ such that $\la^*/\nu^*$ and $\nu^*/\mu^*$ are both generalized strips. For a nonnegative integer $k$, a generalized double strip $\la/\mu$ is called a $k$-generalized double strip if $|\la/\mu|=k$.

\begin{exmp}\label{t:GDS}
$\la=(15,14,10,8,7,6,5,3,1)$, $\mu=(13,11,8,6,5,4,2,1)$ then $\la/\mu$ is a generalized double strip. Indeed we can choose $\nu=(15,13,10,8,6,5,4,2,1)$. The shifted diagram can be drawn as follows.
\begin{gather*}
  \centering
\begin{tikzpicture}[scale=0.6]
   \coordinate (Origin)   at (0,0);
    \coordinate (XAxisMin) at (0,0);
    \coordinate (XAxisMax) at (15,0);
    \coordinate (YAxisMin) at (0,0);
    \coordinate (YAxisMax) at (0,9);
    \draw [thin, black] (0,8) -- (0,9);
    \draw [thin, black] (1,7) -- (1,9);
    \draw [thin, black] (2,6) -- (2,9);
    \draw [thin, black] (3,5) -- (3,9);
    \draw [thin, black] (4,4) -- (4,9);
    \draw [thin, black] (5,3) -- (5,9);
    \draw [thin, black] (6,2) -- (6,9);
    \draw [thin, black] (7,1) -- (7,9);
    \draw [thin, black] (8,0) -- (8,9);
    \draw [thin, black] (9,0) -- (9,9);
    \draw [thin, black] (10,1) -- (10,9);
    \draw [thin, black] (11,2) -- (11,9);
    \draw [thin, black] (12,6) -- (12,9);
    \draw [thin, black] (13,7) -- (13,9);
    \draw [thin, black] (14,7) -- (14,9);
    \draw [thin, black] (15,7) -- (15,9);
    \draw [thin, black] (8,0) -- (9,0);
    \draw [thin, black] (7,1) -- (10,1);
    \draw [thin, black] (6,2) -- (11,2);
    \draw [thin, black] (5,3) -- (11,3);
    \draw [thin, black] (4,4) -- (11,4);
    \draw [thin, black] (3,5) -- (11,5);
    \draw [thin, black] (2,6) -- (12,6);
    \draw [thin, black] (1,7) -- (15,7);
    \draw [thin, black] (0,8) -- (15,8);
    \draw [thin, black] (0,9) -- (15,9);
    \filldraw[fill = gray][ultra thick]
    (8,1) rectangle (9,0) (8,2)rectangle(9,1) (8,2)rectangle(9,3) (9,1)rectangle(10,2) (9,2)rectangle(10,3) (9,3)rectangle(10,4) (9,4)rectangle(10,5) (9,5)rectangle(10,6) (10,2)rectangle(11,3) (10,3)rectangle(11,4) (10,4)rectangle(11,5) (10,5)rectangle(11,6) (10,6)rectangle(11,7) (11,6)rectangle(12,7) (12,7)rectangle(13,8) (13,7)rectangle(14,8) (13,8)rectangle(14,9) (14,7)rectangle(15,8) (14,8)rectangle(15,9);
\end{tikzpicture}
\end{gather*}
\end{exmp}
Similar to a double strip, a generalized double strip can be cut into two kinds of pieces (may be empty and not necessarily connected). One kind of pieces (denoted by $\alpha(\la/\mu)$) consist of the diagonals of length $2$, and the other kind (denoted by $\beta(\la/\mu)$) consisting of the generalized strips formed by the diagonals of length $1$. In the first kind of pieces, we call the box on the top left (resp. bottom right) $t$-box (resp. $-1$-box). We label the $t$-boxes (resp. $-1$-boxes) with $t$ (resp. $-1$) in $\la^*/\mu^*$. Clearly, the number of $t$-boxes and $-1$-boxes in $\alpha(\la/\mu)$ are same. Denote this number by $c(\la/\mu)$. $\beta(\la/\mu)$ is a union of its connected components, each of which is a shifted border strip. Denote the number of connected components in $\beta(\la/\mu)$ by $m(\la/\mu)$. The following graph is for Example \ref{t:GDS} and $c(\la/\mu)=5$, $m(\la/\mu)=5$.
\begin{gather*}
  \centering
\begin{tikzpicture}[scale=0.6]
   \coordinate (Origin)   at (0,0);
    \coordinate (XAxisMin) at (0,0);
    \coordinate (XAxisMax) at (15,0);
    \coordinate (YAxisMin) at (0,0);
    \coordinate (YAxisMax) at (0,9);
    \draw [thin, black] (0,8) -- (0,9);
    \draw [thin, black] (1,7) -- (1,9);
    \draw [thin, black] (2,6) -- (2,9);
    \draw [thin, black] (3,5) -- (3,9);
    \draw [thin, black] (4,4) -- (4,9);
    \draw [thin, black] (5,3) -- (5,9);
    \draw [thin, black] (6,2) -- (6,9);
    \draw [thin, black] (7,1) -- (7,9);
    \draw [thin, black] (8,0) -- (8,9);
    \draw [thin, black] (9,0) -- (9,9);
    \draw [thin, black] (10,1) -- (10,9);
    \draw [thin, black] (11,2) -- (11,9);
    \draw [thin, black] (12,6) -- (12,9);
    \draw [thin, black] (13,7) -- (13,9);
    \draw [thin, black] (14,7) -- (14,9);
    \draw [thin, black] (15,7) -- (15,9);
    \draw [thin, black] (8,0) -- (9,0);
    \draw [thin, black] (7,1) -- (10,1);
    \draw [thin, black] (6,2) -- (11,2);
    \draw [thin, black] (5,3) -- (11,3);
    \draw [thin, black] (4,4) -- (11,4);
    \draw [thin, black] (3,5) -- (11,5);
    \draw [thin, black] (2,6) -- (12,6);
    \draw [thin, black] (1,7) -- (15,7);
    \draw [thin, black] (0,8) -- (15,8);
    \draw [thin, black] (0,9) -- (15,9);
    \node[inner sep=2pt] at (8.5,2.5) {$t$};
    \node[inner sep=2pt] at (9.5,3.5) {$t$};
    \node[inner sep=2pt] at (9.5,4.5) {$t$};
    \node[inner sep=2pt] at (9.5,5.5) {$t$};
    \node[inner sep=2pt] at (13.5,8.5) {$t$};
    \node[inner sep=2pt] at (9.5,1.5) {$-1$};
    \node[inner sep=2pt] at (10.5,2.5) {$-1$};
    \node[inner sep=2pt] at (10.5,3.5) {$-1$};
    \node[inner sep=2pt] at (10.5,4.5) {$-1$};
    \node[inner sep=2pt] at (14.5,7.5) {$-1$};
    \filldraw[fill = gray][ultra thick]
    (8,1) rectangle (9,0) (8,2)rectangle(9,1)   (9,2)rectangle(10,3)  (10,5)rectangle(11,6) (10,6)rectangle(11,7) (11,6)rectangle(12,7) (12,7)rectangle(13,8) (13,7)rectangle(14,8)  (14,8)rectangle(15,9);
\end{tikzpicture}
\end{gather*}

We remark that a generalized strip (or double strip) is naturally a generalized double strip. Let $\la/\mu$ be a generalized double strip, we have (1) if $c(\la/\mu)\geq1$, $m(\la/\mu)=1$, then $\la/\mu$ is a double strip; (2) if $c(\la/\mu)=0$, then $\la/\mu$ is a generalized strip; (3) if $c(\la/\mu)=0$, $m(\la/\mu)=1$, then $\la/\mu$ is a shifted border strip.

Let $\tau=(\tau_1,\tau_2,\cdots,\tau_a),\rho=(\rho_1,\rho_2,\cdots,\rho_b)$ be two compositions. We call $\rho$ is a {\it refinement} of $\tau$, denoted $\rho\prec\tau$, if there exists $i_0=0< i_1< i_2<\cdots< i_{a-1}< b=i_{a}$ such that $\tau_j=\rho_{i_{j-1}+1}+\rho_{i_{j-1}+2}+\cdots+\rho_{i_j}$, $j=1,2,\cdots,a$. For convenience, we also say that $\tau$ is a {\it coarsening} composition of $\rho$.
In particular, $\tau\prec\tau$. Note that a given composition has only finitely many coarsening ones.

Given a generalized double strip $\la/\mu$ with $\beta(\la/\mu)$ having $m$ connected components: $\xi^{(1)},\cdots,\xi^{(m)}$, we define its weight by
\begin{align}
\tilde{wt}(\la/\mu;t)=(-t)^{c(\la/\mu)}2^{\delta_{l(\la),l(\mu)+2}}\prod_{i=1}^{m}\left(\sum_{\xi^{(i)}\prec\tau}(-1)^{l(\xi^{(i)})-l(\tau)}d_{\tau}\right)
\end{align}
where $d_{\tau}=d_{\tau_1}d_{\tau_2}\cdots$ and $d_r=2(t-1)(-1)^{r-1}[r]_{-t}$.

\begin{lem}\label{t:duality}
We have the following properties of Schur's $Q$-functions (in $\la$-ring notation):
\begin{align}\label{e:duality}
\begin{split}
Q_{\la/\mu}(X+Y)&=\sum_{\mu\subset\nu\subset\la}Q_{\la/\nu}(X)Q_{\nu/\mu}(Y) \quad \text{\rm(sum rule)}\\
Q_{\la/\mu}(-X)&=(-1)^{|\la/\mu|}Q_{\la/\mu}(X) \quad \text{\rm(duality rule)}\\
Q_{\la/\mu}(zX)&=z^{|\la/\mu|}Q_{\la/\mu}(X) \quad \text{\rm(homogeneity)}.
\end{split}
\end{align}
\end{lem}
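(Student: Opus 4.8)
The plan is to carry out everything in the $(q,t)=(0,-1)$ specialization, where $Q_{\la/\mu}$ is the skew Schur $Q$-function, and to separate the three identities according to difficulty. Homogeneity is immediate: by \eqref{e:la/mu}, $Q_{\la/\mu}$ is a $\mathbb Q$-linear combination of the $Q_\nu$ with $|\nu|=|\la/\mu|$, hence homogeneous of degree $d=|\la/\mu|$, and since $p_r(zX)=z^rp_r(X)$ every power-sum monomial of degree $d$ scales by $z^{d}$. For the duality rule I would invoke the classical fact that Schur's $Q$-functions lie in the subring $\Gamma=\mathbb Q[p_1,p_3,p_5,\dots]$ generated by the odd power sums, which is visible from $\sum_{r}g_r z^r=\exp\bigl(\sum_{r\ \mathrm{odd}}\tfrac{2}{r}p_rz^r\bigr)$. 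Thus $Q_{\la/\mu}\in\Gamma$ is a homogeneous degree-$d$ polynomial in odd power sums, so in each monomial $p_{r_1}\cdots p_{r_k}$ all $r_i$ are odd with $\sum r_i=d$, forcing $k\equiv d\pmod 2$. Applying $p_r(-X)=-p_r(X)$ then multiplies each monomial by $(-1)^k=(-1)^d$, which gives $Q_{\la/\mu}(-X)=(-1)^{|\la/\mu|}Q_{\la/\mu}(X)$.

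The sum rule is the substantive part, and I would deduce it by the reproducing-kernel mechanism used in the proof of Theorem \ref{t:skew-M-N}. Write $\Pi(X,Z)=\sum_\la P_\la(X)Q_\la(Z)$ for the $(q,t)=(0,-1)$ Cauchy kernel of Lemma \ref{t:product}. The first step is the auxiliary identity
\begin{align*}
\sum_\la Q_{\la/\mu}(W)P_\la(Z)=P_\mu(Z)\,\Pi(W,Z),
\end{align*}
obtained by substituting $Q_{\la/\mu}=\sum_\nu f^\la_{\mu\nu}Q_\nu$ from \eqref{e:la/mu} and using $\sum_\la f^\la_{\mu\nu}P_\la(Z)=P_\mu(Z)P_\nu(Z)$ from \eqref{e:PP}. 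Taking $W=X+Y$ and invoking the multiplicativity $\Pi(X+Y,Z)=\Pi(X,Z)\Pi(Y,Z)$ (the $(0,-1)$ case of \eqref{e:propCauchy}) yields $P_\mu(Z)\Pi(X,Z)\Pi(Y,Z)$. On the other hand, inserting the auxiliary identity twice into $\sum_\la\bigl(\sum_\nu Q_{\la/\nu}(X)Q_{\nu/\mu}(Y)\bigr)P_\la(Z)$ produces the same expression $\Pi(X,Z)\,P_\mu(Z)\,\Pi(Y,Z)$. Comparing coefficients of the basis elements $P_\la(Z)$ then proves the sum rule, the range $\mu\subset\nu\subset\la$ being automatic since $Q_{\la/\nu}$ and $Q_{\nu/\mu}$ vanish otherwise.

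The main obstacle I anticipate is entirely bookkeeping in the sum-rule step: one must read the auxiliary identity and its double application as genuine identities of formal series in the three independent alphabets $X,Y,Z$, and then justify the concluding coefficient comparison by the $F$-linear independence of $\{P_\la(Z)\}$. Once this is set up no further input is needed, as homogeneity and the duality rule rest only on the grading and on the odd-power-sum description of $\Gamma$ already available at $(q,t)=(0,-1)$.
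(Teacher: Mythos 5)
Your proposal is correct, but it is genuinely more self-contained than the paper's proof, which consists entirely of deferrals: the paper disposes of the sum rule and homogeneity with the remark that they "can be proven similarly as those of Schur functions" (the Schur case itself being quoted from \cite{HR} without proof), and it obtains the duality rule by citing \cite[p.~259, Ex.~3]{Mac}. Your Cauchy-kernel argument for the sum rule is precisely the standard mechanism the paper alludes to --- it is the same reproducing-kernel computation as in the proof of Theorem \ref{t:skew-M-N}, and as you set it up it actually proves the sum rule for arbitrary $(q,t)$, with $(0,-1)$ as a special case. Your duality argument via $\Gamma=\mathbb{Q}[p_1,p_3,p_5,\dots]$ is a direct verification of the $\omega$-invariance fact that the paper's citation supplies; the only point to tighten is that the generating function shows only that the one-row functions $q_r=g_r(X;0,-1)$ lie in $\Gamma$, so you should add a line noting that every $Q_\nu$ (hence every $Q_{\la/\mu}=\sum_\nu f^\la_{\mu\nu}Q_\nu$) is a polynomial in the $q_r$, e.g.\ by the raising-operator formula \eqref{e:HL} specialized to $t=-1$; with that, the parity count $k\equiv d \pmod 2$ does the rest. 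A further merit of your route is worth recording: at $t=-1$ the inner product \eqref{e:innerprod} degenerates (indeed $Q_\nu=0$ for non-strict $\nu$, so $\{P_\la\}$ and $\{Q_\la\}$ are no longer dual bases in the usual sense), so any proof phrased through orthogonality would need care, whereas your argument uses only the structure constants $f^\la_{\mu\nu}$ via \eqref{e:la/mu} and \eqref{e:PP} together with the Cauchy identity of Lemma \ref{t:product}, all of which are polynomial in $t$ and specialize cleanly to $(q,t)=(0,-1)$.
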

\begin{proof}
The first and the third identity can be proven similarly as those of Schur functions. The second identity is a result of \cite[p.259 Ex. 3]{Mac}.
\end{proof}

We remark that the duality rule in \eqref{e:duality} means that $Q_{\la/\mu}(-X)=Q_{\la/\mu}(-x_1,-x_2,\cdots)$, that is $Q_{\la/\mu}(-X)$ in $\la$-ring notation is exactly equal to $Q_{\la/\mu}(X)$ with $x_i$ replaced by $-x_i$. In particular, we have $Q_{\la/\mu}(t-1)=Q_{\la/\mu}(t,-1):=Q_{\la/\mu}(X)\mid_{x_1=t,x_2=-1,x_3=0,x_4=0,\cdots}$.

Let $q=0$, $t=-1$, we obtain symmetric function $\tilde{g}_r(a-1;X;0,-1)$ (denoted by $f_r(a)$) defined by its generating function:
\begin{align}
(a-1)\sum_{r\geq0}f_r(a)z^{r}=\prod_{i\geq1}\frac{1+ax_iz}{1-ax_iz}\frac{1-x_iz}{1+x_iz}.
\end{align}
Denote $f_{\la}(a)=f_{\la_1}(a)f_{\la_2}(a)\cdots$.

In \cite{WW}, Wan and Wang used the super-duality to derive a Frobenius character formula for the Hecke-Clifford algebra $\mathcal{H}^c_n$ in terms of spin Hall-Littlewood functions. Explicitly,
\begin{align}\label{e:characters}
f_{\mu}(q)=\sum\limits_{\lambda}2^{-\frac{l(\lambda)+\delta(\lambda)}{2}}\zeta^{\lambda}_{\mu}(q)Q_{\lambda}(X)
\end{align}
where $
\delta(\lambda)=
\begin{cases}
0 &\text {if $l(\lambda)$ is even,}\\
1 &\text {if $l(\lambda)$ is odd.}
\end{cases}$ And $\zeta^{\la}(q)$ is the irreducible character of the Hecke-Clifford algebra indexed by $\la$.

The following result is crucial to the Murnaghan-Nakayama rule for the Hecke-Clifford algebra. 
\begin{prop}\label{t:Qwt}\cite[Corollary 3.10]{JL3}
In the $\la$-ring language, we have
\begin{align}
Q_{\la/\mu}(t-1)=Q_{\la/\mu}(t,-1)=
\begin{cases}
\tilde{wt}(\la/\mu;t) &\text {if $\la/\mu$ is a generalized double strip,}\\
0 &\text {otherwise.}
\end{cases}
\end{align}
\end{prop}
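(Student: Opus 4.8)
The plan is to follow the template of the proof of Lemma \ref{t:wt1}, replacing the sum, duality and homogeneity rules for Schur functions by the corresponding rules for Schur's $Q$-functions (Lemma \ref{t:duality}). Writing the two-element alphabet as $\{t,-1\}=\{t\}+\{-1\}$ and applying the sum rule followed by homogeneity, I would start from
\begin{align*}
Q_{\la/\mu}(t,-1)=\sum_{\mu\subset\nu\subset\la}Q_{\la/\nu}(t)\,Q_{\nu/\mu}(-1)=\sum_{\mu\subset\nu\subset\la}t^{|\la/\nu|}(-1)^{|\nu/\mu|}Q_{\la/\nu}(1)\,Q_{\nu/\mu}(1).
\end{align*}
The only external input needed is the single-variable specialization of a skew Schur $Q$-function, which I would record as a preliminary lemma exactly parallel to the fact $s_{\rho/\tau}(x)=x^{|\rho/\tau|}$ used in Lemma \ref{t:wt1}: for one variable $x$ one has $Q_{\rho/\sigma}(x)=2^{c}x^{|\rho/\sigma|}$ when the shifted shape $\rho^*/\sigma^*$ is a generalized strip with $c$ connected components, and $Q_{\rho/\sigma}(x)=0$ otherwise. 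I would prove this from the marked shifted tableau model: a $2\times2$ block admits no single-value filling, whereas each connected shifted border strip contributes the factor $2$ arising from the freedom to prime its initial box; equivalently it follows from the one-row generating function $\prod_i(1+x_iz)/(1-x_iz)$.

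Substituting the lemma, the sum survives only for those $\nu$ with $\la^*/\nu^*$ and $\nu^*/\mu^*$ both generalized strips, and by definition such a $\nu$ exists exactly when $\la/\mu$ is a generalized double strip. This immediately yields $Q_{\la/\mu}(t,-1)=0$ in the complementary case, settling the second branch. In the remaining case the identity to be proven reduces to
\begin{align*}
\sum_{\nu}t^{|\la/\nu|}(-1)^{|\nu/\mu|}2^{\,c(\la/\nu)+c(\nu/\mu)}=\tilde{wt}(\la/\mu;t),
\end{align*}
the sum ranging over all $\nu$ admissible in the above sense.

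The heart of the argument, and the step I expect to be the main obstacle, is the combinatorial evaluation of this last sum. The choice of an admissible $\nu$ is the same as a distribution of the boxes of $\la/\mu$ between $\la/\nu$ and $\nu/\mu$, and I would argue that this distribution decouples over the canonical pieces $\alpha(\la/\mu)$ (the length-$2$ diagonals) and $\beta(\la/\mu)=\xi^{(1)}\sqcup\cdots\sqcup\xi^{(m)}$ (the length-$1$ part). On each length-$2$ diagonal the no-$2\times2$ condition on both $\la/\nu$ and $\nu/\mu$ forces the $t$-box into $\la/\nu$ and the $-1$-box into $\nu/\mu$, so that these diagonals contribute the rigid factor $(-t)^{c(\la/\mu)}$. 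On each component $\xi^{(i)}$ one sums over the ways of cutting the border strip into consecutive runs that alternately enter $\la/\nu$ and $\nu/\mu$; the elementary single-box contribution is $2t+2(-1)=2(t-1)=d_1$, and the signed coarsening sum $\sum_{\xi^{(i)}\prec\tau}(-1)^{l(\xi^{(i)})-l(\tau)}d_\tau$ is precisely the inclusion–exclusion that corrects the factor $2^{c(\la/\nu)+c(\nu/\mu)}$ for the merging of adjacent same-side boxes into a single connected piece (one checks directly, e.g.\ for a two-box component, that $d_1^2-d_2=2(t-1)^2$ matches the three allowed splits). The delicate point throughout is the accounting of the powers of $2$: once the component factors are absorbed into the $d_\tau$'s and into $(-t)^{c(\la/\mu)}$, a single residual factor survives, namely the global $2^{\delta_{l(\la),l(\mu)+2}}$ recording the boundary behaviour at the main diagonal (the configuration in which $l(\la)=l(\mu)+2$). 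Verifying that all signs, powers of $t$, and factors of $2$ assemble into the closed form $\tilde{wt}(\la/\mu;t)$ is where the real work lies.
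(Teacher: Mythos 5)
Your overall route (the sum rule and homogeneity from Lemma \ref{t:duality}, one-variable specializations, then a combinatorial assembly) is the natural one; note that the paper itself gives no proof of Proposition \ref{t:Qwt} but cites \cite[Corollary 3.10]{JL3}, so the comparison here is purely about whether your argument stands on its own. It does not, because your preliminary one-variable lemma is false. Take $\rho=(2,1)$, $\sigma=\emptyset$: the shifted diagram $\rho^*=\{(1,1),(1,2),(2,2)\}$ is a connected shifted border strip, hence a generalized strip with $c=1$, yet
\begin{align*}
Q_{(2,1)}(x)=q_2(x)q_1(x)-2q_3(x)=4x^3-4x^3=0\neq 2x^3.
\end{align*}
The correct statement is that $Q_{\rho/\sigma}(x)=2^{c}x^{|\rho/\sigma|}$ holds when no box of $\rho^*/\sigma^*$ has simultaneously a neighbour to its left and a neighbour below it (a box with a left neighbour is forced to be unprimed, a box with a box below it is forced to be primed, and each connected component then carries exactly one free box), and $Q_{\rho/\sigma}(x)=0$ otherwise. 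For shifted skew diagrams this condition is strictly stronger than ``no $2\times2$ block'': the two differ exactly in the L-shaped configuration $(i,i),(i,i+1),(i+1,i+1)$ at the main diagonal, whose fourth cell $(i+1,i)$ does not exist in the shifted plane.

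This is not a cosmetic slip, because the diagonal is where every distinctive feature of the statement comes from. First, your claim that the sum over $\nu$ ``survives only for those $\nu$ with $\la^*/\nu^*$ and $\nu^*/\mu^*$ both generalized strips'' is wrong: such terms can vanish. For $\la/\mu=(2,1)/\emptyset$ the splittings $\nu=\emptyset$ and $\nu=(2,1)$ have both pieces generalized strips but contribute $0$, whereas your lemma assigns them $2t^3$ and $-2$; your sum then gives $2t^3-4t^2+4t-2$ instead of the correct value $Q_{(2,1)}(t,-1)=-4t(t-1)=\tilde{wt}((2,1)/\emptyset;t)$, so the spurious terms do not cancel. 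Second, your forcing argument on the length-$2$ diagonals is valid off the diagonal (there, placing both boxes of a diagonal pair in the same piece really does create a $2\times2$ block) but fails for the diagonal pair present when $l(\la)=l(\mu)+2$: in that case one only gets the three-box L, which the no-$2\times2$ hypothesis does not exclude. Since it is precisely this diagonal analysis that produces the double-strip phenomenon and the factor $2^{\delta_{l(\la),l(\mu)+2}}$, and since you yourself leave the final bookkeeping of signs and powers of $2$ as ``where the real work lies'', the proposal has a genuine gap. With the corrected one-variable lemma your skeleton can likely be salvaged, but the main-diagonal case must be treated separately rather than absorbed into the no-$2\times2$ condition.
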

The following Murnaghan-Nakayama rule for the Hecke-Clifford algebra is a special case of Corollary \ref{t:M-M-N2} ($q=0, t=-1$).
\begin{thm}
Let $r\geq0$, then
\begin{align}\label{e:Q-M-N}
f_r(q)Q_{\mu}(X)=\sum_{\la}(q-1)^{-1}\tilde{wt}(\la/\mu;q)Q_{\la}(X)
\end{align}
summed over all partitions $\la$ such that $\la/\mu$ is a generalized double strip.
\end{thm}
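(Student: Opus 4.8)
The plan is to obtain the statement as the $(q,t)\to(0,-1)$ specialization of the Macdonald Murnaghan--Nakayama rule in Corollary \ref{t:M-M-N2}, in exactly the way the Hecke-algebra rule was read off from the $(q,t)\to(0,0)$ specialization. Concretely, I would start from \eqref{e:M-M-N2},
\[
\tilde{g}_{k}((a-1)X;q,t)P_{\mu}(X;q,t)=\sum_{\la\vdash n+k}\frac{Q_{\la/\mu}(a-1;q,t)}{a-1}P_{\la}(X;q,t),
\]
put $q=0$, $t=-1$, and rename the free parameter $a$ as $q$. Three identifications then have to be carried out: the left-hand generating operator must become $f_r$, the Macdonald polynomials must become Schur's $Q$-functions, and the skew coefficient $Q_{\la/\mu}(a-1;0,-1)$ must be evaluated by Proposition \ref{t:Qwt}.

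For the first identification I would specialize the generating function \eqref{e:defg(a/b)} with $\mathbf{a}=(a)$ and $\mathbf{b}=(1)$. Using $(y;q)_{\infty}|_{q=0}=1-y$ one finds
\[
\mathbf{g}((a-1)X;z)\big|_{q=0,\,t=-1}=\prod_{i\ge 1}\frac{1+ax_iz}{1-ax_iz}\,\frac{1-x_iz}{1+x_iz},
\]
so that $\tilde{g}_r((a-1)X;0,-1)=f_r(a)$ by the very definition of $f_r$; after $a\mapsto q$ this is the operator $f_r(q)$ on the left of the claim. For the second identification, at $(q,t)=(0,-1)$ the Macdonald polynomials specialize to Schur's $Q$-functions, $Q_{\la}(X;0,-1)=Q_{\la}(X)$ on strict partitions, which converts the $P_{\mu}$, $P_{\la}$ in \eqref{e:M-M-N2} into the $Q_{\mu}(X)$, $Q_{\la}(X)$ appearing in the statement; the bookkeeping here is just the constant relating the $P$- and $Q$-normalizations of Macdonald polynomials at $(0,-1)$ and is routine.

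The decisive input is Proposition \ref{t:Qwt}, which evaluates the skew coefficient as
\[
Q_{\la/\mu}(q-1)=Q_{\la/\mu}(q,-1)=\begin{cases}\tilde{wt}(\la/\mu;q)&\text{if }\la/\mu\text{ is a generalized double strip},\\ 0&\text{otherwise.}\end{cases}
\]
Feeding this into the specialized \eqref{e:M-M-N2} simultaneously supplies the weight $\tilde{wt}(\la/\mu;q)$, forces the sum to range only over those $\la$ for which $\la/\mu$ is a generalized double strip, and---together with the $a-1$ denominator built into \eqref{e:M-M-N2} (equivalently the $1/(a-1)$ in the definition of $\tilde{g}_k$)---produces the prefactor $(q-1)^{-1}$. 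This yields \eqref{e:Q-M-N}.

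I expect the only genuine obstacle to be Proposition \ref{t:Qwt} itself, that is, the combinatorial evaluation of $Q_{\la/\mu}(q,-1)$ in terms of generalized double strips and the explicit weight $\tilde{wt}$; but since it is quoted from \cite{JL3} I may take it as given, after which the theorem is an immediate specialization. The secondary point requiring care is tracking the powers of $2$ coming from the Schur $P$-versus-$Q$ normalization, which should be absorbed consistently into the stated weight.
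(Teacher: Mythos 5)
Your proposal is correct and follows essentially the same route as the paper: the paper's proof likewise specializes Corollary \ref{t:M-M-N2} at $q=0$, $t=-1$ (renaming $a$ as $q$) so that $\tilde{g}_r$ becomes $f_r$ and the Macdonald polynomials become Schur's $Q$-functions, and then invokes Proposition \ref{t:Qwt} to identify $Q_{\la/\mu}(q-1)$ with $\tilde{wt}(\la/\mu;q)$. In fact you are somewhat more careful than the paper, which passes silently over the $P$-versus-$Q$ normalization (the powers of $2$) that you flag as the point needing bookkeeping.
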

\begin{proof}
Taking $q=0$ and $t-1$, \eqref{e:M-M-N2} reduces to
\begin{align*}
f_{r}(q)Q_{\mu}(X)=\sum_{\la}(q-1)^{-1}Q_{\la/\mu}(q-1)Q_{\la}(X)
\end{align*}
Then \eqref{e:Q-M-N} follows from Proposition \ref{t:Qwt}.
\end{proof}

\begin{rem}
It would be interesting to find 
a representation theoretic explanation for our formula \eqref{e:M-M-N2} for arbitrary $q$ and $t$.
\end{rem}

\section{A combinatorial inversion of the Pieri rule for Hall-Littlewood functions}
In 2006, Lassalle and Schlosser \cite{LS} gave an analytic inversion of the Pieri formula for Macdonald polynomials, which asserts
\begin{align}\label{e:schlosser}
Q_{(\la_1,\cdots,\la_{n+1})}(X;q,t)=\sum_{\theta\in\mathbb{N}^n}C_{\theta_1,\cdots,\theta_n}^{(q,t)}(u_1,\cdots,u_n)
g_{\la_{n+1}-|\theta|}(X;q,t)Q_{(\la_1+\theta_1,\ldots,\la_n+\theta_n)}(X;q,t).
\end{align}
Here $u_i=q^{\la_i-\la_{n+1}}t^{n-i}$ and $C_{\theta_1,\theta_2,\cdots,\theta_n}^{(q,t)}(u_1,u_2,\cdots,u_n)$ is an explicit expression involving only $q$ and $t$ defined by
\begin{align}\label{e:C}
\begin{split}
C_{\theta_1,\theta_2,\cdots,\theta_n}^{(q,t)}(u_1,u_2,\cdots,u_n)=\prod_{k=1}^{n}t^{\theta_k}\frac{(q/t;q)_{\theta_k}}{(q;q)_{\theta_k}}
\frac{(qu_k;q)_{\theta_k}}{(qtu_k;q)_{\theta_k}}\prod_{1\leq i<j\leq n}\frac{(qu_i/tu_j;q)_{\theta_i}}{(qu_i/u_j;q)_{\theta_i}}
\frac{(tu_i/v_j;q)_{\theta_i}}{(u_i/v_j;q)_{\theta_i}}\\
\times \frac{1}{\bigtriangleup(v)}\det_{1\leq i,j\leq n}\left[v_i^{n-j}\left(1-t^{j-1}\frac{1-tv_i}{1-v_i}\prod_{k=1}^{n}\frac{u_k-v_i}{tu_k-v_i}\right)\right],
\end{split}
\end{align}
where $\bigtriangleup(v)$ is the Vandermonde determinant $\prod_{1\leq i,j\leq n}(v_i-v_j)$ and $v_i=q^{\theta_i}u_i$, $i=1,2,\cdots,n$.

It is worth pointing out that $(\la_1+\theta_1,\ldots,\la_n+\theta_n)$ is not necessarily a partition, in which case Macdonald polynomials are defined to be trivial. That is, $Q_{\la}(X;q,t)=0$ in \eqref{e:schlosser} when $\la$ is not a partition, which is different from the Hall-Littlewood case \cite[11.1]{LS}. For the reason that $C_{\theta}^{(q,t)}$ involves some determinant values, it is nontrivial to obtain explicitly $\lim_{q\rightarrow0}C_{\theta}^{(q,t)}$ (see \cite[Sec. 8]{LS} for details).

In this section, using Corollary \ref{t:M-M-N}, 
we will present a combinatorial inversion of the Pieri rule for Hall-Littlewood functions, which expands the Hall-Littlewood functions in terms of the product of a one-row Hall-Littlewood function and
 Hall-Littlewood functions indexed by partitions. Some special cases and applications of this formula are also considered.
\subsection{Vertex operator realization of Hall-Littlewood polynomials}
We restrict ourselves to the space $\Lambda_{t}=\Lambda\otimes\mathbb{Q}(t)$. Introduce the inner product on $\Lambda_t$ by
\begin{align}\label{e:Hinner}
\langle p_{\la}, p_{\mu} \rangle_{t}=\delta_{\la\mu}z_{\la}(0,t),
\end{align}
under which the Hall-Littlewood functions $Q_{\la}(X;t)$ and $P_{\la}(X;t)$ are dual to each other.

The multiplication operator $p_n:\Lambda_t\rightarrow\Lambda_t$ is of degree n. By \eqref{e:Hinner}, the dual operator is the differential operator
$p_n^*=\frac{n}{1-t^n}\frac{\partial}{\partial p_n}$ of degree $-n$. Note that $*$ is $\mathbb{Q}(t)$-linear and an anti-involution.

We now recall the vertex operator realization of the Hall-Littlewood symmetric functions from \cite{Jing1}.

The \textit{vertex operators} $H(z)$ are $t$-parameterized maps: $\Lambda_{t}\longrightarrow \Lambda_{t}[[z, z^{-1}]]$ defined by
\begin{align}
\label{e:hallop}
H(z)&=\mbox{exp} \left( \sum\limits_{n\geq 1} \dfrac{1-t^{n}}{n}p_nz^{n} \right) \mbox{exp} \left( -\sum \limits_{n\geq 1} \frac{\partial}{\partial p_n}z^{-n} \right)=\sum_{n\in\mathbb Z}H_nz^{n}.
\end{align}
The components $H_n$ are endomorphisms of $\Lambda_{t}$ of degree $n$, thus
$H_{-n}$ are annihilation operators for $n>0$. We collect their relations as follows.

\begin{prop}\cite{Jing1} The operators $H_n$ satisfy the following relations
\begin{align}\label{e:com1}
H_{m}H_n-tH_nH_m=tH_{m+1}H_{n-1}-H_{n-1}H_{m+1}, \quad H_{-n}. 1=\delta_{n, 0}.
\end{align}
where $\delta_{m, n}$ is the Kronecker delta function.
\end{prop}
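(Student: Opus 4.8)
The plan is to compute the operator product $H(z)H(w)$ by a normal-ordering (contraction) argument, which is the standard route to vertex-operator commutation relations. First I would split the defining expression \eqref{e:hallop} as $H(z)=\Gamma_+(z)\Gamma_-(z)$, where $\Gamma_+(z)=\exp\bigl(\sum_{n\geq1}\frac{1-t^n}{n}p_nz^n\bigr)$ collects the multiplication (creation) part and $\Gamma_-(z)=\exp\bigl(-\sum_{n\geq1}\frac{\partial}{\partial p_n}z^{-n}\bigr)$ collects the differentiation (annihilation) part. The only nontrivial interaction is the contraction of $\Gamma_-(z)$ past $\Gamma_+(w)$.

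Since $\frac{\partial}{\partial p_n}$ and $p_m$ satisfy $[\frac{\partial}{\partial p_n},p_m]=\delta_{nm}$, the commutator of the two exponents is the scalar
\[
\Bigl[-\sum_{n\geq1}\tfrac{\partial}{\partial p_n}z^{-n},\ \sum_{m\geq1}\tfrac{1-t^m}{m}p_mw^m\Bigr]=-\sum_{n\geq1}\frac{1-t^n}{n}\Bigl(\frac{w}{z}\Bigr)^n=\log\frac{z-w}{z-tw},
\]
expanded for $|w|<|z|$. Because this commutator is central, the Baker--Campbell--Hausdorff identity gives $\Gamma_-(z)\Gamma_+(w)=\frac{z-w}{z-tw}\,\Gamma_+(w)\Gamma_-(z)$, and therefore $H(z)H(w)=\frac{z-w}{z-tw}:\!H(z)H(w)\!:$, where $:\!H(z)H(w)\!:=\Gamma_+(z)\Gamma_+(w)\Gamma_-(z)\Gamma_-(w)$ is manifestly symmetric in $z$ and $w$.

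To sidestep the expansion-domain subtlety I would clear the denominator: multiplying by $(z-tw)$ yields the honest formal power series identity $(z-tw)H(z)H(w)=(z-w):\!H(z)H(w)\!:$, and swapping $z\leftrightarrow w$ gives $(w-tz)H(w)H(z)=(w-z):\!H(z)H(w)\!:$. Since the two right-hand sides differ only by an overall sign, eliminating the normal-ordered product produces the clean generating-function relation $(z-tw)H(z)H(w)=(tz-w)H(w)H(z)$. I would then extract the coefficient of $z^{m+1}w^{n}$ on both sides; matching terms yields exactly $H_mH_n-tH_nH_m=tH_{m+1}H_{n-1}-H_{n-1}H_{m+1}$, the first relation in \eqref{e:com1}.

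The vacuum relation $H_{-n}.1=\delta_{n,0}$ is then immediate: every annihilation operator kills the constant $1$, so $\Gamma_-(z).1=1$ and hence $H(z).1=\Gamma_+(z).1=\exp\bigl(\sum_{n\geq1}\frac{1-t^n}{n}p_nz^n\bigr)$, which contains only nonnegative powers of $z$ with constant term $1$. Reading off coefficients gives $H_{-n}.1=0$ for $n>0$ and $H_0.1=1$. The main (and essentially only genuine) obstacle is the contraction step—correctly summing the commutator series to $\log\frac{z-w}{z-tw}$ and carefully handling the formal-series manipulation when clearing the denominator—after which the coefficient extraction is pure bookkeeping.
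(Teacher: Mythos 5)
Your proof is correct: the contraction $\Gamma_-(z)\Gamma_+(w)=\frac{z-w}{z-tw}\Gamma_+(w)\Gamma_-(z)$, the denominator-clearing to reach the honest formal identity $(z-tw)H(z)H(w)=(tz-w)H(w)H(z)$, the extraction of the coefficient of $z^{m+1}w^n$, and the vacuum argument all check out. The paper itself offers no proof of this proposition (it is quoted from \cite{Jing1}), and your normal-ordering computation is essentially the standard argument used in that reference, so there is nothing to add.
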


We remark that the indexing of $H_m$ is different from that of \cite{Jing1}, where $H_n$ was denoted as $H_{-n}$ for instance.

\begin{prop} \cite{Jing1} \label{t:HL} Let $\lambda=(\lambda_{1},\ldots ,\lambda_{l})$ be a partition.
The vertex operator products  $H_{\lambda_{1}}\cdots H_{\lambda_{l}}. 1$ is the
Hall-Littlewood function $Q_{\la}(X;t)$:
\begin{equation}\label{e:HL}
H_{\lambda_{1}}\cdots H_{\lambda_{l}}. 1=Q_{\la}(X;t)=
\prod\limits_{i<j} \dfrac{1-R_{ij}}{1-tR_{ij}}q_{\lambda_{1}}\cdots q_{\lambda_{l}}
\end{equation}
where the raising operator $R_{ij}q_{\la}=q_{(\la_{1},\ldots ,\la_{i}+1,\ldots ,\la_{j}-1,\ldots , \la_{l})}$. Moreover,
$H_{\lambda}.1=H_{\lambda_1}\cdots H_{\lambda_l}.1$ are orthogonal in $\Lambda_t$:
\begin{align}\label{e:orth}
\langle H_{\lambda}.1, H_{\mu}.1\rangle_{t}=\delta_{\lambda\mu}b_{\lambda}(0,t),
\end{align}
where $b_{\lambda}(0,t)=(1-t)^{l(\lambda)}\prod_{i\geqslant 1}[m_i(\lambda)]_t!$, $[n]_t=\frac{1-t^n}{1-t}$ and $[n]_t!=[n]_t\cdots[1]_t$.
\end{prop}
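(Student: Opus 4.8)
The plan is to work entirely inside the bosonic Fock space $\Lambda_t$ and reduce both displayed identities to a single normal-ordering computation for the product $H(z_1)\cdots H(z_l)$; the raising-operator formula then comes out by extracting a coefficient, and the orthogonality follows as a corollary via $P$--$Q$ duality.

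First I would split each vertex operator as $H(z)=\Gamma_+(z)\Gamma_-(z)$, where $\Gamma_+(z)=\exp\left(\sum_{n\geq1}\frac{1-t^n}{n}p_nz^n\right)$ is the multiplication (creation) part and $\Gamma_-(z)=\exp\left(-\sum_{n\geq1}\frac{\partial}{\partial p_n}z^{-n}\right)$ is the annihilation part. The only nontrivial interaction is between $\Gamma_-(z_i)$ and $\Gamma_+(z_j)$ for $i<j$: since $[\partial/\partial p_n,\,p_m]=\delta_{nm}$, their bracket is the central element $-\sum_{n\geq1}\frac{1-t^n}{n}(z_j/z_i)^n$, so Baker--Campbell--Hausdorff together with the elementary identity $\exp\left(-\sum_{n\geq1}\frac{1-t^n}{n}x^n\right)=\frac{1-x}{1-tx}$ (with $x=z_j/z_i$) gives
\[
\Gamma_-(z_i)\Gamma_+(z_j)=\frac{z_i-z_j}{z_i-tz_j}\,\Gamma_+(z_j)\Gamma_-(z_i).
\]
Moving all annihilation parts to the right across the whole product therefore produces
\[
H(z_1)\cdots H(z_l)=\Big(\prod_{1\leq i<j\leq l}\frac{z_i-z_j}{z_i-tz_j}\Big)\,\Gamma_+(z_1)\cdots\Gamma_+(z_l)\,\Gamma_-(z_1)\cdots\Gamma_-(z_l).
\]

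Applying this to $1$ kills every $\Gamma_-(z_j)$ (each annihilation operator acts as the identity on the vacuum), while $\Gamma_+(z_j)\cdot1=\sum_{m\geq0}q_m z_j^m$ reproduces the one-row generating function, with $q_m=Q_{(m)}(X;t)$ and $q_m=0$ for $m<0$. Extracting the coefficient of $z_1^{\lambda_1}\cdots z_l^{\lambda_l}$ and matching the scalar factor $\frac{z_i-z_j}{z_i-tz_j}=\frac{1-z_j/z_i}{1-tz_j/z_i}$ with the operator $\frac{1-R_{ij}}{1-tR_{ij}}$ (under the dictionary $z_j/z_i\leftrightarrow R_{ij}$, which raises the $i$th index and lowers the $j$th) yields exactly $H_{\lambda}\cdot1=\prod_{i<j}\frac{1-R_{ij}}{1-tR_{ij}}q_{\lambda_1}\cdots q_{\lambda_l}$. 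This raising-operator expression is Macdonald's formula for the Hall--Littlewood function \cite{Mac}, so $H_{\la}\cdot1=Q_{\la}(X;t)$, which is \eqref{e:HL}. The orthogonality \eqref{e:orth} is then immediate: since $H_{\la}\cdot1=Q_{\la}(X;t)=b_{\la}(0,t)P_{\la}(X;t)$, specializing the $P$--$Q$ duality $\langle P_{\la},Q_{\mu}\rangle_{q,t}=\delta_{\la\mu}$ of Section 2 to $q=0$ gives
\[
\langle H_{\la}\cdot1,\,H_{\mu}\cdot1\rangle_t=\langle Q_{\la},Q_{\mu}\rangle_t=b_{\la}(0,t)\langle P_{\la},Q_{\mu}\rangle_t=\delta_{\la\mu}\,b_{\la}(0,t),
\]
and the evaluation $b_{\la}(0,t)=\prod_{i\geq1}\prod_{k=1}^{m_i(\la)}(1-t^k)=(1-t)^{l(\la)}\prod_{i\geq1}[m_i(\la)]_t!$ finishes the claim. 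Alternatively one can argue intrinsically by computing $H_n^{\bot}$ from $p_n^{\bot}=\frac{n}{1-t^n}\frac{\partial}{\partial p_n}$ and commuting creation operators past annihilation operators, but the route through $P$--$Q$ duality is shortest.

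The main obstacle is the normal-ordering step: one must expand $\frac{z_i-z_j}{z_i-tz_j}$ as a formal power series in $z_j/z_i$ consistently with the ordering $i<j$, so that the geometric series is well defined and the correspondence $z_j/z_i\leftrightarrow R_{ij}$ is unambiguous. Getting the direction of the raising operator correct (which index is raised and which lowered) and checking that the vanishing $q_m=0$ for $m<0$ makes the operator product $\prod_{i<j}\frac{1-R_{ij}}{1-tR_{ij}}q_{\la}$ a finite, well-defined sum are the delicate bookkeeping points; the remaining manipulations are routine Heisenberg-algebra computations.
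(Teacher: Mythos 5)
The paper gives no proof of this proposition at all---it is quoted from \cite{Jing1}---so the only meaningful comparison is with that reference, and your argument is a correct, self-contained reconstruction of essentially the same proof: normal-ordering $H(z_1)\cdots H(z_l)$ via the central-commutator BCH identity, killing the annihilation parts on the vacuum, extracting coefficients under the dictionary $z_j/z_i\leftrightarrow R_{ij}$, invoking the classical Littlewood/Macdonald raising-operator formula for $Q_{\la}(X;t)$, and then getting \eqref{e:orth} from $Q_{\la}=b_{\la}(0,t)P_{\la}$ together with $\langle P_{\la},Q_{\mu}\rangle_t=\delta_{\la\mu}$. All the individual steps (the factor $\frac{z_i-z_j}{z_i-tz_j}$, the direction of $R_{ij}$, and the evaluation $b_{\la}(0,t)=(1-t)^{l(\la)}\prod_{i\geq1}[m_i(\la)]_t!$) check out, so the proposal is sound.
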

\subsection{A combinatorial inversion of the Pieri rule}
We introduce the symmetric function $q_n = q_n(X;t)$ by the generating series
\begin{align}
\mbox{exp} \left( \sum\limits_{n\geq 1} \dfrac{1-t^{n}}{n}p_nz^{n} \right)=\sum_{n\geq0}q_n(X;t)z^n.
\end{align}
Clearly, the polynomial $q_n$ is the Hall-Littlewood symmetric function $Q_{(n)}$ associated with the one-row partition $(n)$. For any partition
$\la=(\la_1,\la_2,\cdots)$, we define $q_{\la}=q_{\la_1}q_{\la_2}\cdots$. Then the set $\{q_{\la}\}$ forms a basis of $\Lambda_t$, which are usually called the generalized homogenous polynomials.

Subsequently, using the $\la$-ring notation, we have
\begin{align}
\mbox{exp} \left( \sum\limits_{n\geq 1} \dfrac{t^{n}-1}{n}p_nz^{n} \right)=\sum_{n\geq0}q_n(-X;t)z^n.
\end{align}

Notice that $p_n^*=\frac{n}{1-t^n}\frac{\partial}{\partial p_n}$ with respect to $\langle \ , \ \rangle_t$, we have
\begin{align}
\mbox{exp} \left( -\sum \limits_{n\geq 1} \frac{\partial}{\partial p_n}z^{-n} \right)=\sum_{n\geq0}q^*_n(-X;t)z^{-n}.
\end{align}
Using \eqref{e:hallop}, we have
\begin{align}\label{e:decomposition}
H_n=\sum_{k\geq0}q_{n+k}(X;t)q^*_{k}(-X;t).
\end{align}
By \eqref{e:M-M-N}, we have
\begin{align}
q_{k}(-X;t)P_{\mu}(X;t)=\sum_{\la\vdash|\mu|+k}Q_{\la/\mu}(-1;t)P_{\la}(X;t).
\end{align}
By duality,
\begin{align}\label{e:q*Q}
q^*_{k}(-X;t)Q_{\la}(X;t)=\sum_{\mu\vdash|\la|-k}Q_{\la/\mu}(-1;t)Q_{\mu}(X;t).
\end{align}

Now we give another main result. Let us begin with some notations. Denote $\la^{[1]}=(\la_2,\la_3,\cdots,\la_l)$ and $C_{\la}^{i}=\{\mu\vdash i\mid \mu\subset\la\}$.

\begin{thm}\label{t:inverse}
For an arbitrary partition $\la=(\la_1,\la_2,\cdots,\la_l)\vdash n$, we have
\begin{align}\label{e:inverse}
Q_{\la}(X;t)=\sum_{k\geq0}^{n-\la_1}\sum_{\mu\in C_{\la^{[1]}}^{n-\la_1-k}}Q_{\la^{[1]}/\mu}(-1;t)q_{\la_1+k}(X;t)Q_{\mu}(X;t).
\end{align}
Here $Q_{\la^{[1]}/\mu}(-1;t)$ can be explicitly computed in a compact form by \eqref{e:Q(-1)}.
\end{thm}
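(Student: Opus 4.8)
The plan is to read the identity directly off the vertex operator realization of the Hall-Littlewood functions, combined with the two action formulas established just above the statement. The starting point is Proposition \ref{t:HL}, which gives $Q_{\la}(X;t)=H_{\la_1}H_{\la_2}\cdots H_{\la_l}.1$. Since $H_{\la_2}\cdots H_{\la_l}.1=Q_{\la^{[1]}}(X;t)$ by the same proposition, I would peel off the leading operator and write
\begin{align*}
Q_{\la}(X;t)=H_{\la_1}\bigl(Q_{\la^{[1]}}(X;t)\bigr).
\end{align*}

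Next I would substitute the decomposition \eqref{e:decomposition}, namely $H_{\la_1}=\sum_{k\geq0}q_{\la_1+k}(X;t)\,q^*_{k}(-X;t)$, to obtain
\begin{align*}
Q_{\la}(X;t)=\sum_{k\geq0}q_{\la_1+k}(X;t)\,\bigl(q^*_{k}(-X;t)Q_{\la^{[1]}}(X;t)\bigr),
\end{align*}
where $q_{\la_1+k}(X;t)$ acts by multiplication while $q^*_{k}(-X;t)$ is the lowering (adjoint) operator. The inner factor is exactly what \eqref{e:q*Q} evaluates: applying that formula with $\la$ replaced by $\la^{[1]}$ gives
\begin{align*}
q^*_{k}(-X;t)Q_{\la^{[1]}}(X;t)=\sum_{\mu\vdash|\la^{[1]}|-k}Q_{\la^{[1]}/\mu}(-1;t)Q_{\mu}(X;t).
\end{align*}

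Substituting this back and collecting terms then yields the claimed expansion, so the remaining work is purely bookkeeping. First, $|\la^{[1]}|=n-\la_1$, so the inner index runs over $\mu\vdash n-\la_1-k$; since $Q_{\la^{[1]}/\mu}(-1;t)$ vanishes unless $\mu\subset\la^{[1]}$, this set of $\mu$ is precisely $C_{\la^{[1]}}^{n-\la_1-k}$. Second, a partition of $n-\la_1-k$ exists only when $k\leq n-\la_1$, which truncates the a priori infinite sum over $k$ to the stated range $0\leq k\leq n-\la_1$. Finally, the closed form for $Q_{\la^{[1]}/\mu}(-1;t)$ promised in the statement is supplied verbatim by \eqref{e:Q(-1)}. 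I expect no genuine obstacle here: the entire content of the theorem is front-loaded into the operator identity \eqref{e:decomposition} and the action formula \eqref{e:q*Q}, and the proof amounts to the clean composition of these two facts after removing the first vertex operator.
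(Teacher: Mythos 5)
Your proof is correct and follows essentially the same route as the paper: peel off $H_{\la_1}$ via the vertex operator realization (Proposition \ref{t:HL}), expand it with \eqref{e:decomposition}, and evaluate the lowering part with \eqref{e:q*Q}. Your explicit bookkeeping (identifying the index set $C_{\la^{[1]}}^{n-\la_1-k}$ and truncating the $k$-sum) is exactly what the paper does implicitly.
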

\begin{proof} It follows from the vertex operator realization of Hall-Littlewood functions (Prop. \ref{t:HL}) that
\begin{align*}
Q_{\la}(X;t)&=H_{\la_1}H_{\la_2}\cdots H_{\la_l}.1  \\
&=\sum_{k\geq0}^{n-\la_1}q_{\la_1+k}(X;t)q^*_{k}(-X;t)H_{\la^{[1]}}.1 \quad \text{(by \eqref{e:decomposition})}\\
&=\sum_{k\geq0}^{n-\la_1}q_{\la_1+k}(X;t)\sum_{\mu\in C_{\la^{[1]}}^{n-\la_1-k}}Q_{\la^{[1]}/\mu}(-1;t)Q_{\mu}(X;t) \quad \text{(by \eqref{e:q*Q})}\\
&=\sum_{k\geq0}^{n-\la_1}\sum_{\mu\in C_{\la^{[1]}}^{n-\la_1-k}}Q_{\la^{[1]}/\mu}(-1;t)q_{\la_1+k}(X;t)Q_{\mu}(X;t).
\end{align*}
\end{proof}

\subsection{Special cases of Theorem \ref{t:inverse}}
Taking $t=0$ and $t=-1$ respectively, \eqref{e:inverse} reduces to
\begin{align}\label{e:sinverse}
s_{\la}(X)=\sum_{k=0}^{n-\la_1}\sum_{\mu\in C^{n-\la_1-k}_{\la^{[1]}}}s_{\la^{[1]}/\mu}(-1)s_{(\la_1+k)}(X)s_{\mu}(X)
\end{align}
and
\begin{align}\label{e:Qinverse}
Q_{\la}(X)=\sum_{k=0}^{n-\la_1}\sum_{\mu\in C^{n-\la_1-k}_{\la^{[1]}}}Q_{\la^{[1]}/\mu}(-1)Q_{(\la_1+k)}(X)Q_{\mu}(X).
\end{align}

\begin{lem}\label{t:-1}
Let $\mu\subset\la$ be two partitions, then
\begin{align}
s_{\la/\mu}(-1)&=
\begin{cases}
(-1)^{|\la/\mu|}& \text{if $\la/\mu$ is a vertical strip,}\\
0& \text{otherwise.}
\end{cases}\\
Q_{\la/\mu}(-1)&=
\begin{cases}
(-1)^{|\la/\mu|}& \text{if $\la/\mu$ is a horizontal strip,}\\
0& \text{otherwise.}
\end{cases}
\end{align}
\end{lem}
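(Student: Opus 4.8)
The plan is to prove the two identities in parallel, and in each case the strategy is the same: first use the appropriate $\la$-ring duality rule to trade the anti-alphabet $-1$ for the ordinary one-letter alphabet $1$ (picking up the sign $(-1)^{|\la/\mu|}$), and then invoke a single-variable specialization. For the Schur case this goes through cleanly. Starting from the duality rule $s_{\la/\mu}(-X)=(-1)^{|\la/\mu|}s_{\la'/\mu'}(X)$ recorded just above the statement and specializing $X$ to a single letter, one gets $s_{\la/\mu}(-1)=(-1)^{|\la/\mu|}s_{\la'/\mu'}(1)$. Now the single-variable rule already quoted in the proof of Lemma \ref{t:wt1} (Macdonald I, (5.8)) says that $s_{\rho/\tau}(x)=x^{|\rho/\tau|}$ when $\rho/\tau$ is a horizontal strip and $0$ otherwise; hence $s_{\la'/\mu'}(1)=1$ precisely when $\la'/\mu'$ is a horizontal strip, which is exactly the condition that $\la/\mu$ be a \emph{vertical} strip, and $0$ otherwise. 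Combining the two gives $s_{\la/\mu}(-1)=(-1)^{|\la/\mu|}$ on vertical strips and $0$ elsewhere, as claimed.

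For the Schur $Q$ case I would run the same two-step pattern, now using the duality rule of Lemma \ref{t:duality}, namely $Q_{\la/\mu}(-X)=(-1)^{|\la/\mu|}Q_{\la/\mu}(X)$ (note there is \emph{no} conjugation here, which is why the surviving condition will be a horizontal rather than a vertical strip). Specializing to one variable reduces everything to the evaluation $Q_{\la/\mu}(-1)=(-1)^{|\la/\mu|}Q_{\la/\mu}(1)$, so the whole problem collapses to understanding the single-variable value $Q_{\la/\mu}(1)$. The support of this specialization on horizontal strips can be read off in two equivalent ways: either by specializing the Pieri-type formula $Q_{\la/\nu}(a;q,t)=\varphi_{\la/\nu}(q,t)\,a^{|\la/\nu|}$ from Section 3 at $q=0,t=-1$, or directly from the $t=-1$ case of the explicit expression \eqref{e:Q(-1)}. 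Either route shows that $Q_{\la/\mu}(1)$ vanishes unless $\la/\mu$ is a horizontal strip, and on such a strip reduces to a single constant depending only on the column multiplicities of $\la$.

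The main obstacle is precisely the evaluation of this single-variable constant in the Schur $Q$ case. Unlike the Schur situation, where the one-letter specialization is simply $x^{|\la/\mu|}$ with coefficient $1$, here the coefficient is governed by $\varphi_{\la/\mu}(0,-1)$, equivalently by the surviving product of $(-1)$-binomial coefficients in \eqref{e:Q(-1)} at $t=-1$. I would compute it by factorizing the horizontal strip column by column, using that each nonzero contribution localizes to the rightmost boxes of a column, and then tracking the parities $m_i(\la)\bmod 2$ that decide whether each factor of the form $1-(-1)^{m_i(\la)}$ survives. This column-by-column bookkeeping is where the real content lies; the duality reduction in the first two paragraphs is purely formal, so all the care should be concentrated on pinning down this constant and confirming it produces the asserted value on every horizontal strip.
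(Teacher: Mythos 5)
Your Schur-function half is complete and is exactly the paper's own argument: the duality rule gives $s_{\la/\mu}(-1)=(-1)^{|\la/\mu|}s_{\la'/\mu'}(1)$, and the one-variable horizontal-strip rule quoted in the proof of Lemma \ref{t:wt1} finishes it. No issues there.

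The genuine gap is in the Schur-$Q$ half, and it sits precisely at the step you defer. Your reduction $Q_{\la/\mu}(-1)=(-1)^{|\la/\mu|}Q_{\la/\mu}(1)$ via Lemma \ref{t:duality} coincides with the paper's proof, which then dismisses the rest as ``similar''; that dismissal tacitly uses $Q_{\la/\mu}(1)=1$ on horizontal strips. You are more careful and correctly isolate the constant $Q_{\la/\mu}(1)=\varphi_{\la/\mu}(0,-1)$ as the real content, but the verification you postpone cannot succeed. For Hall--Littlewood functions the one-variable rule is $Q_{\la/\mu}(x;t)=\varphi_{\la/\mu}(t)\,x^{|\la/\mu|}$ on horizontal strips, with $\varphi_{\la/\mu}(t)=\prod_{i\in I}\bigl(1-t^{m_i(\la)}\bigr)$, where $I$ is the set of columns containing a box of $\la/\mu$ whose right-neighbour column contains none. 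Every $i\in I$ satisfies $m_i(\la)\geq 1$, hence $m_i(\la)=1$ when $\la$ is strict, so at $t=-1$ each surviving factor equals $2$ and $Q_{\la/\mu}(1)=2^{|I|}$, which is never $1$ for a nonempty strip. The simplest instance already fails: $Q_{(1)}(X)=q_1(X;-1)=2p_1(X)$, so $Q_{(1)}(-1)=-2$, whereas the lemma asserts $-1$; the same value comes out of \eqref{e:Q(-1)}, which gives $Q_{(1)}(-1;t)=t-1$. (And $-2$ is what the inversion formula actually needs, since $Q_{(2,1)}=Q_{(2)}Q_{(1)}-2\,Q_{(3)}$.) So the column-by-column bookkeeping you outline, carried out honestly, produces $(-1)^{|\la/\mu|}2^{|I(\la/\mu)|}$ rather than $(-1)^{|\la/\mu|}$: your strategy is sound, but it cannot ``confirm the asserted value'', because the second identity of the lemma --- and the paper's one-line justification of it --- is off by this power of $2$ (and can even vanish on horizontal strips when $\la$ is not strict and some $m_i(\la)$ with $i\in I$ is even). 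The correct conclusion of your computation is $Q_{\la/\mu}(-1)=(-1)^{|\la/\mu|}\,2^{|I(\la/\mu)|}$ on horizontal strips and $0$ otherwise.
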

\begin{proof}
By duality, we have
\begin{align*}
s_{\la/\mu}(-1)=(-1)^{|\la/\mu|}s_{\la'/\mu'}(1).
\end{align*}
Note that $s_{\rho/\tau}(1)=0$ unless $\rho/\tau$ is a horizontal strip, in which case $s_{\rho/\tau}(-1)=(-1)^{|\rho/\tau|}.$ This yields the first identity. The case of Schur's $Q$-functions is similar. Just note $Q_{\la/\mu}(-1)=(-1)^{|\la/\mu|}Q_{\la/\mu}(1)$.
\end{proof}

Using Lemma \ref{t:-1}, \eqref{e:sinverse} and \eqref{e:Qinverse} can be rewritten respectively as:
\begin{align}\label{e:sdecom}
s_{\la}(X)=\sum_{\mu}(-1)^{|\la^{[1]}/\mu|}s_{(|\la|-|\mu|)}(X)s_{\mu}(X)
\end{align}
summed over all partitions $\mu$ such that $0\leq|\mu|\leq n-\la_1$ and $\la^{[1]}/\mu$ is a vertical strip and

\begin{align}
Q_{\la}(X)=\sum_{\mu}(-1)^{|\la^{[1]}/\mu|}Q_{(|\la|-|\mu|)}(X)Q_{\mu}(X)
\end{align}
summed over all strict partitions $\mu$ such that $0\leq|\mu|\leq n-\la_1$ and $\la^{[1]}/\mu$ is a horizontal strip.

\section{Two iterative formulae for the $(q,t)$-Kostka polynomial}\label{s:Kostka}
In this section, using the dual version of our multiparametric Murnaghan-Nakayama rule for Macdonald polynomials \eqref{e:dual}, we will give two iterative formulae for the $(q,t)$-Kostka polynomial. The first one expresses the $(q,t)$-Kostka polynomial in terms of those with smaller lengths of the upper partition, while the second one reduces to those with shorter lengths of the lower partition. Moreover, using the first iterative formula, we will derive a general formula for the $(q,t)$-Kostka polynomial via the Lassalle-Okounkov generalized $(q, t)$-binomial coefficients.

On the one hand, choose $\mathbf{a}=(1,t,t^2,t^3,\cdots)$, $\mathbf{b}=\emptyset$ in \eqref{e:dual}. Then we have
\begin{align}\label{e:g*Q}
g^{\bot}_k(\frac{X}{1-t};q,t)Q_{\la}(X;q,t)=\sum_{\mu}Q_{\la/\mu}(\frac{1}{1-t};q,t)Q_{\mu}(X;q,t)
\end{align}
where
\begin{align}
\sum_{k=0}^{\infty}g^{\bot}_{k}(\frac{X}{1-t};q,t)z^k=\exp\left(\sum_{n=1}^{\infty}\frac{1}{1-t^n}\frac{\partial}{\partial p_n(X)}z^n\right).
\end{align}

For $q,t\rightarrow0$ followed by $\mathbf{a}=(1,q,q^2,q^3,\cdots)$ and $\mathbf{b}=\emptyset$, in this order, \eqref{e:dual} reduces to
\begin{align}\label{e:g*Q2}
g^{\bot}_k(\frac{X}{1-q};0,0)s_{\la}(X)=\sum_{\mu}s_{\la/\mu}(\frac{1}{1-q})s_{\mu}(X)
\end{align}
where
\begin{align}
\sum_{k=0}^{\infty}g^{\bot}_{k}(\frac{X}{1-q};0,0)z^k=\exp\left(\sum_{n=1}^{\infty}\frac{1}{1-q^n}\frac{\partial}{\partial p_n(X)}z^n\right).
\end{align}

On the other hand, recall that the {\it complete symmetric function} $s_{(n)}$ (one-row Schur function) is defined by
\begin{align}
\sum_{n\geq0}s_{(n)}(X)z^n=\exp\left(\sum_{n\geq0}\frac{1}{n}p_n(X)z^n\right).
\end{align}
Note that $p^*_n=\frac{n}{1-t^n}\frac{\partial}{\partial p_n}$ with respect to $\langle \cdot , \cdot \rangle_t$, therefore the adjoint operator
$s^*_{(n)}$ is given by
\begin{align}
\sum_{n\geq0}s^*_{(n)}z^n=\exp\left(\sum_{n\geq0}\frac{1}{1-t^n}\frac{\partial}{\partial p_n}z^n\right).
\end{align}
Similarly, \eqref{e:defg} implies the following adjoint operator:
\begin{align}
\sum_{k=0}^{\infty}g^{*}_{k}(X;q,t)z^k=\exp\left(\sum_{n=1}^{\infty}\frac{1}{1-q^n}\frac{\partial}{\partial p_n(X)}z^n\right).
\end{align}
Thus $s^*_{(k)}(X)=g^{\bot}_k(\frac{X}{1-t};q,t)$ and $g^{*}_{k}(X;q,t)=g^{\bot}_{k}(\frac{X}{1-q};0,0)$. Here we emphasize that the former pair are
the adjoint operators (denoted by $*$) with respect to the inner product $\langle \ , \ \rangle_t$, while the latter pair are the adjoint operators
(denoted by $\perp$) are relative to the inner product $\langle \ , \ \rangle_{q,t}$.

If we introduce a set of (fictitious) variables $\xi_i$ by means of
\begin{align}
\prod_i\frac{1-tx_iy}{1-x_iy}=\prod_i\frac{1}{1-\xi_iy},
\end{align}
then the {\it big Schur function} $S_{\la}(X;t)$ is defined by $S_{\la}(X;t):=s_{\la}(\xi)$, which is also
$s_{\la}(\frac{X}{1-t})$ in $\lambda$-ring notation. It is known that the bases $\{S_{\la}(X;t)\}$ and $\{s_{\la}(X)\}$ are dual to each other under the inner product $\langle \cdot ,\cdot \rangle_t$ defined in \eqref{e:Hinner}.

Now we are ready to formulate our final results. We start by recalling the $(q,t)$-Kostka polynomials $K_{\la \mu}(q,t)$ defined by \cite[(8.11)]{Mac}
\begin{align}
J_{\mu}(X;q,t)=\sum_{\la}K_{\la \mu}(q,t)S_{\la}(X;t),
\end{align}
where $J_{\mu}(X;q,t)$ is the integral Macdonald function associated to partition $\mu$, $|\mu|=|\la|$.

The duality of $\{s_{\la}(X)\}$ and $\{S_{\la}(X; t)\}$ gives that
\begin{align}\label{e:Kinner}
K_{\la \mu}(q,t)=\langle J_{\mu}(X;q,t), s_{\la}(X) \rangle_t=c_{\mu}'(q,t)\langle Q_{\mu}(X;q,t), s_{\la}(X) \rangle_t.
\end{align}
\subsection{The first iteration}
\begin{thm}\label{t:q,t-Kite}
Let $\la,\mu$ be two arbitrary partitions of the same weight, we have
\begin{align}\label{e:q,t-Kite}
K_{\la \mu}(q,t)=\sum_{\rho,\tau}\frac{c_{\mu}'(q,t)}{c_{\tau}'(q,t)}(-1)^{|\la^{[1]}/\rho|}Q_{\mu/\tau}(\frac{1}{1-t};q,t)K_{\rho \tau}(q,t)
\end{align}
summed over all partitions $\rho$ and $\tau$ such that $\rho\subset\la^{[1]}$, $\tau\subset\mu$ and $\la^{[1]}/\rho$ is a vertical strip.
\end{thm}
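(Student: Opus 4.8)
The plan is to compute $K_{\la\mu}(q,t)$ directly from its inner-product description and to transfer the combinatorial work onto the dual Murnaghan-Nakayama rule. The starting point is the identity \eqref{e:Kinner}, which reads $K_{\la\mu}(q,t)=c'_{\mu}(q,t)\langle Q_{\mu}(X;q,t),s_{\la}(X)\rangle_t$. The idea is to peel off the first row of $\la$ on the Schur side and push the resulting one-row factor through the pairing onto $Q_{\mu}$, where it becomes a skew-Macdonald operator governed by \eqref{e:g*Q}.

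First I would expand $s_{\la}$ using the Schur specialization \eqref{e:sdecom} of the inversion theorem, namely $s_{\la}(X)=\sum_{\rho}(-1)^{|\la^{[1]}/\rho|}s_{(|\la|-|\rho|)}(X)\,s_{\rho}(X)$, the sum ranging over $\rho\subset\la^{[1]}$ with $\la^{[1]}/\rho$ a vertical strip. Substituting this into \eqref{e:Kinner} turns the computation into a sum of pairings $\langle Q_{\mu},s_{(k)}s_{\rho}\rangle_t$ with $k=|\la|-|\rho|=|\mu|-|\rho|$. Since multiplication by the one-row Schur function $s_{(k)}$ has adjoint $s^{*}_{(k)}$ with respect to $\langle\cdot,\cdot\rangle_t$, I would move it across the pairing to obtain $\langle s^{*}_{(k)}Q_{\mu},s_{\rho}\rangle_t$.

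The crucial step is the operator identity recorded earlier, $s^{*}_{(k)}(X)=g^{\bot}_{k}(\tfrac{X}{1-t};q,t)$, which lets me apply the dual multiparametric rule \eqref{e:g*Q} to $Q_{\mu}$: this gives $g^{\bot}_{k}(\tfrac{X}{1-t};q,t)Q_{\mu}=\sum_{\tau}Q_{\mu/\tau}(\tfrac{1}{1-t};q,t)Q_{\tau}$, summed over $\tau\subset\mu$ with $|\tau|=|\mu|-k=|\rho|$. Feeding this back into the pairing produces $\sum_{\tau}Q_{\mu/\tau}(\tfrac{1}{1-t};q,t)\langle Q_{\tau},s_{\rho}\rangle_t$, and a second use of \eqref{e:Kinner}, in the form $\langle Q_{\tau},s_{\rho}\rangle_t=K_{\rho\tau}(q,t)/c'_{\tau}(q,t)$, converts each surviving pairing into a Kostka polynomial whose upper partition $\rho\subset\la^{[1]}$ has strictly shorter length than $\la$. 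Collecting the factors $c'_{\mu}/c'_{\tau}$, the signs $(-1)^{|\la^{[1]}/\rho|}$, and the coefficients $Q_{\mu/\tau}(\tfrac{1}{1-t};q,t)$ yields exactly \eqref{e:q,t-Kite}.

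The argument is essentially mechanical once the ingredients are lined up; the only routine point requiring care is the bookkeeping of degrees, where one must verify that the constraint $|\tau|=|\rho|$ (forced both by $g^{\bot}_{k}$ lowering degree by $k$ and by the orthogonality of $\langle\cdot,\cdot\rangle_t$ across distinct degrees) is compatible with the equal-weight requirement $|\rho|=|\tau|$ that makes $K_{\rho\tau}$ well defined. The main conceptual obstacle, already resolved in the text, is recognizing that the adjoint of one-row Schur multiplication under the Hall-Littlewood pairing coincides with the degree-lowering Macdonald operator $g^{\bot}_{k}(\tfrac{X}{1-t};q,t)$; it is precisely this coincidence of two a priori distinct adjoints that makes the dual rule \eqref{e:g*Q} applicable and drives the entire derivation.
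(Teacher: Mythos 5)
Your proposal is correct and follows essentially the same route as the paper's own proof: expand $s_{\la}$ via \eqref{e:sdecom}, move the one-row factor across the Hall--Littlewood pairing, identify $s^{*}_{(k)}$ with $g^{\bot}_{k}(\tfrac{X}{1-t};q,t)$ so that \eqref{e:g*Q} applies, and close with a second use of \eqref{e:Kinner}. No gaps; the degree bookkeeping you flag is handled identically in the text.
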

\begin{proof} It follows from direct calculation that
\begin{align*}
K_{\la \mu}(q,t)=&c'_{\mu}(q,t)\left\langle Q_{\mu}(X;q,t), s_{\la}(X) \right\rangle_t \quad \text{(by \eqref{e:Kinner})}\\
=&c'_{\mu}(q,t)\left\langle Q_{\mu}(X;q,t), \sum_{\rho}(-1)^{|\la^{[1]}/\rho|}s_{(|\la|-|\rho|)}s_{\rho}(X) \right\rangle_t \quad \text{(by \eqref{e:sdecom})}\\
=&\sum_{\rho}(-1)^{|\la^{[1]}/\rho|}c'_{\mu}(q,t)\left\langle s^*_{(|\la|-|\rho|)}Q_{\mu}(X;q,t), s_{\rho}(X) \right\rangle_t,
\end{align*}
summed over $\rho$ such that $\la^{[1]}/\rho$ is a vertical strip.
Note that $s^*_{(k)}(X)=g^{\bot}_{k}(\frac{X}{1-t};q,t)$. It follows from \eqref{e:g*Q} that
\begin{align*}
K_{\la \mu}(q,t)=&\sum_{\rho}(-1)^{|\la^{[1]}/\rho|}c'_{\mu}(q,t)\left\langle \sum_{\tau}Q_{\mu/\tau}(\frac{1}{1-t};q,t)Q_{\tau}(X;q,t), s_{\rho}(X) \right\rangle_t\\
=&\sum_{\rho,\tau}\frac{c_{\mu}'(q,t)}{c_{\tau}'(q,t)}(-1)^{|\la^{[1]}/\rho|}Q_{\mu/\tau}(\frac{1}{1-t};q,t)K_{\rho \tau}(q,t),
\end{align*}
summed over $\rho$ and $\tau$ such that $\tau\supset\mu$ and $\la^{[1]}/\rho$ is a vertical strip.
\end{proof}


Now we will show $\frac{c_{\mu}'(q,t)}{c_{\tau}'(q,t)}Q_{\mu/\tau}(\frac{1}{1-t};q,t)$ is exactly the generalized $(q, t)$-binomial coefficients $\left(\begin{matrix}\mu\\ \tau\end{matrix}\right)_{q,t}$ (up to some power of $t$) introduced independently by Lassalle and Okounkov \cite{L2,Ok}.

To see this we introduce some notations. Let $u$ be a composition with length $n$, i.e., $u=(u_1,\cdots,u_n)\in\mathbb{N}^n$. The symmetric group $\mathfrak{S}_n$ acts on compositions by permuting the parts. The unique partition in the $\mathfrak{S}_n$-orbit of $u$ is denoted by $u^+$, which
 is usually called the sorted partition of $u$. Write $u^+=\sigma_u(u)$ for the unique permutation of minimal length. 
 The staircase partition $\delta$ is defined as $\delta=(n-1,n-2,\cdots,1,0)$. We also denote $a^{u}b^v=(a^{u_1}b^{v_1},a^{u_2}b^{v_2},\cdots,a^{u_n}b^{v_n})$ for scalars $a,b$ and compositions $u,v$ with length $n$.

The {\it spectral vector} $\langle u\rangle$ of $u$ is defined by $\langle u\rangle:=q^ut^{\sigma_u(\delta)}$. For example, let $u=(3,4,2,1,0,2,5)$, then $u^+=(5,4,3,2,2,1,0)$, $\sigma_u=(1753)(46)$, $\langle u\rangle=(q^3t^4,q^4t^5,q^2t^2,qt,1,q^2t^3,q^5t^6)$.

Let $x=(x_1,x_2,\cdots,x_n)$. The {\it interpolation Macdonald polynomial}  $M_{u}(x):=M_{u}(x;q,t)$ is defined \cite{Kno,Lasc2,Sa1,Sa2} as the unique polynomial of degree $|u|$ such that \\
(i) $M_u(\langle v\rangle)=0$ for $|v|\leq|u|$, $u\neq v$; \\
(ii) The coefficient of $x^u$ is $q^{-n'(u)}$, where $n'(u)=n((u^+)')$.

Introduce the {\it nonsymmetric Macdonald polynomial} $E_{u}(x)$ by $$E_{u}(x)=E_{u}(x;q,t):=q^{n'(u)}\lim_{a\rightarrow0}a^{|u|}M_{u}(x/a).$$

For convenience, define the normalized Macdonald polynomials:
\begin{align*}
\mathbf{M}_{u}(x):=q^{n'(u)}t^{n(u)}\frac{M_{u}(x)}{c'_{u}(q,t)},\quad \mathbf{E}_{u}(x):=t^{n(u)}\frac{E_{u}(x)}{c'_{u}(q,t)}.
\end{align*}

The symmetric analogue of $\mathbf{M}_{u}(x)$, denoted by $\mathbf{MS}_{u}(x)$ (called {\it symmetric interpolation polynomials} or {\it shifted Macdonald polynomials}), is given by
\begin{align*}
\mathbf{MS}_{\la}(x):=\sum_{u^+=\la}\mathbf{M}_{u}(x), \quad \text{$\la$ is a partition}.
\end{align*}

Note that some definitions above are not standard but provide more succinct description for our purposes.

We introduce two generalized $(q, t)$-binomial coefficients:
\begin{align*}
\left[\begin{matrix}u\\ v\end{matrix}\right]_{q,t}:=\frac{\mathbf{M}_{v}(\langle u\rangle)}{\mathbf{M}_{v}(\langle v\rangle)},\quad\quad\quad
\left(\begin{matrix}\la\\ \mu\end{matrix}\right)_{q,t}:=\frac{\mathbf{MS}_{\mu}(\langle \la\rangle)}{\mathbf{MS}_{\mu}(\langle \mu\rangle)}.
\end{align*}
The first one was introduced by Sahi \cite{Sa2}. As the symmetric analogue of the first one, the second one was introduced independently by Lassalle and Okounkov \cite{L2,Ok}. These two generalized $(q, t)$-binomial coefficients both have nice orthogonality relations:
\begin{align}
\sum_{v}\frac{\tau_{v}}{\tau_{u}}\left[\begin{matrix}u\\ v\end{matrix}\right]_{q,t}\left[\begin{matrix}v\\ w\end{matrix}\right]_{q^{-1},t^{-1}}&=\delta_{uw}\\
\sum_{\mu}\frac{\tau_{\mu}}{\tau_{\la}}\left(\begin{matrix}\la\\ \mu\end{matrix}\right)_{q,t}\left(\begin{matrix}\mu\\ \nu\end{matrix}\right)_{q^{-1},t^{-1}}&=\delta_{\la\nu},
\end{align}
where $\tau_{u}=(-1)^{|u|}q^{n'(u)}t^{-n(u^+)}$.

The relation between these two generalized $(q, t)$-binomial coefficients can be stated as follows \cite[(3.46)]{BF}
\begin{align}\label{e:rel}
\left(\begin{matrix}\la\\ \mu\end{matrix}\right)_{q,t}=\sum_{v^+=\mu}\left[\begin{matrix}u\\ v\end{matrix}\right]_{q,t}
\end{align}
where $u$ is any composition such that $u^+=\la$.

It follows from \cite[pp. 633-634]{LRW} that
\begin{align}\label{e:Qlamu}
\frac{c_{\la}'(q,t)}{c_{\mu}'(q,t)}Q_{\la/\mu}(\frac{1}{1-t};q,t)=t^{n(\la)-n(\mu)}\sum_{v^+=\mu}\mathbf{E}_{u/v}(1,0).
\end{align}
where $u$ is any composition such that $u^+=\la$.

Combining \eqref{e:rel}, \eqref{e:Qlamu} and \cite[6.7a]{LRW} we have
\begin{align}
\mathbf{E}_{u/v}(1,0)=\left[\begin{matrix}u\\ v\end{matrix}\right]_{q,t},
\end{align}
subsequently
\begin{align}
\frac{c_{\la}'(q,t)}{c_{\mu}'(q,t)}Q_{\la/\mu}(\frac{1}{1-t};q,t)=t^{n(\la)-n(\mu)}\left(\begin{matrix}\la\\ \mu\end{matrix}\right)_{q,t}.
\end{align}

Therefore we can rewrite \eqref{e:q,t-Kite} as follows:
\begin{align}\label{e:q,t-KBite}
K_{\la \mu}(q,t)=\sum_{\rho,\tau}(-1)^{|\la^{[1]}/\rho|}t^{n(\mu)-n(\tau)}\left(\begin{matrix}\mu\\ \tau\end{matrix}\right)_{q,t}K_{\rho \tau}(q,t)
\end{align}
summed over all partitions $\rho$ and $\tau$ such that $\tau\subset\mu$ and $\la^{[1]}/\rho$ is a vertical strip. Now let's say that
$\la\sqsupset \rho$ if $\la^{[1]}/\rho$ is a vertical strip.

By a straightforward iteration of \eqref{e:q,t-KBite} we obtain a general formula for the $(q,t)$-Kostka polynomial via the Lassalle-Okounkov generalized $(q, t)$-binomial coefficient.
\begin{cor}\label{t:Kgeneral}
Let $\la,\mu$ be two arbitrary partitions with the same weight, then we have
\begin{align}\label{e:Kgeneral}
K_{\la\mu}(q,t)=(-1)^{|\la|}t^{n(\mu)}\sum_{\underline{\la}, \underline{\mu}}(-1)^{wid(\underline{\la})}
\prod_{i=0}^{r-1}\left(\begin{matrix}\mu^{(i)}\\ \mu^{(i+1)}\end{matrix}\right)_{q,t}
\end{align}
summed over sequences of partitions
$\underline{\la}: \la=\la^{(0)}\sqsupset \la^{(1)} \sqsupset \cdots\sqsupset\la^{(r)}=\emptyset$ and $\underline{\mu}: \mu=\mu^{(0)}\supset \mu^{(1)} \supset \cdots\mu^{(r)}=\emptyset$ ($1\leq r\leq l(\la)$) such that $|\la^{(i)}|=|\mu^{(i)}|$, $i=1,2,\cdots r-1$ ( i.e. $(\la^{(i)})^{[1]}/\la^{(i+1)}$ is a vertical strip). Here
$wid(\underline{\la})=\sum_{i\geq 0} wid(\la^{(i)})$ and $wid(\la)=\la_1$ is the width of partition $\lambda$.
\end{cor}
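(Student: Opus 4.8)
The plan is to unwind the one-step recursion \eqref{e:q,t-KBite} repeatedly until the upper partition is emptied, and then to read off the global sign and power of $t$ by telescoping. The key structural observation is that in \eqref{e:q,t-KBite} the new upper index $\rho$ satisfies $\rho\subset\la^{[1]}$, hence $l(\rho)\leq l(\la)-1$ whenever $\la\neq\emptyset$; thus each application strictly decreases the length of the upper partition, and after finitely many steps the upper index must reach $\emptyset$. Recording the successive truncations as $\la=\la^{(0)}\sqsupset\la^{(1)}\sqsupset\cdots\sqsupset\la^{(r)}=\emptyset$ and $\mu=\mu^{(0)}\supset\mu^{(1)}\supset\cdots\supset\mu^{(r)}$, where $\la^{(i)}\sqsupset\la^{(i+1)}$ abbreviates that $(\la^{(i)})^{[1]}/\la^{(i+1)}$ is a vertical strip, a completed iteration expresses $K_{\la\mu}(q,t)$ as a sum over all such pairs of sequences with each summand equal to
\begin{equation*}
(-1)^{\sum_{i=0}^{r-1}|(\la^{(i)})^{[1]}/\la^{(i+1)}|}\;t^{\sum_{i=0}^{r-1}(n(\mu^{(i)})-n(\mu^{(i+1)}))}\left(\prod_{i=0}^{r-1}\left(\begin{matrix}\mu^{(i)}\\ \mu^{(i+1)}\end{matrix}\right)_{q,t}\right)K_{\la^{(r)}\mu^{(r)}}(q,t).
\end{equation*}

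I would carry this out as a formal induction on $l(\la)$, the inductive step being a single substitution of \eqref{e:q,t-KBite}. The base case and the constraints of the index set come from two facts about Kostka polynomials: $K_{\emptyset\emptyset}(q,t)=1$, and $K_{\rho\tau}(q,t)=0$ whenever $|\rho|\neq|\tau|$. The vanishing forces every surviving sequence to satisfy $|\la^{(i)}|=|\mu^{(i)}|$ for all $i$, and in particular forces the terminal lower partition to vanish, $\mu^{(r)}=\emptyset$, once $\la^{(r)}=\emptyset$; together with $1\leq r\leq l(\la)$ this reproduces exactly the summation range of \eqref{e:Kgeneral}.

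It then remains to evaluate the two exponents. The power of $t$ telescopes at once to $\sum_{i=0}^{r-1}(n(\mu^{(i)})-n(\mu^{(i+1)}))=n(\mu)-n(\mu^{(r)})=n(\mu)$. For the sign I would use $|(\la^{(i)})^{[1]}/\la^{(i+1)}|=|\la^{(i)}|-wid(\la^{(i)})-|\la^{(i+1)}|$, where $wid(\la^{(i)})=\la^{(i)}_1$; summing over $i$ and telescoping the weights gives
\begin{equation*}
\sum_{i=0}^{r-1}|(\la^{(i)})^{[1]}/\la^{(i+1)}|=|\la|-\sum_{i\geq0}wid(\la^{(i)})=|\la|-wid(\underline{\la}),
\end{equation*}
using $wid(\la^{(r)})=0$. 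Since $(-1)^{-x}=(-1)^{x}$, the accumulated sign is $(-1)^{|\la|}(-1)^{wid(\underline{\la})}$, and assembling the three factors yields precisely \eqref{e:Kgeneral}.

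The proof involves no new idea beyond \eqref{e:q,t-KBite}; the only point requiring care is the combinatorial bookkeeping, namely checking that the iteration is permitted to terminate at every $r$ with $1\leq r\leq l(\la)$ (each truncation contributing a distinct summand), and that the automatically imposed weight condition $|\la^{(i)}|=|\mu^{(i)}|$ together with the terminal condition $\mu^{(r)}=\emptyset$ are the sole constraints beyond the vertical-strip relations $\la^{(i)}\sqsupset\la^{(i+1)}$. I expect this matching of index sets, rather than the algebra, to be the main thing to verify.
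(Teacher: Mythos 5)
Your proposal is correct and is exactly the paper's approach: the paper proves Corollary \ref{t:Kgeneral} only by remarking that it follows ``by a straightforward iteration of \eqref{e:q,t-KBite}'', and your unwinding of that recursion—termination via $l(\rho)\leq l(\la)-1$, the constraints from $K_{\emptyset\emptyset}(q,t)=1$ and the vanishing of $K_{\rho\tau}(q,t)$ for unequal weights, and the telescoping of $t^{n(\mu^{(i)})-n(\mu^{(i+1)})}$ and of $(-1)^{|(\la^{(i)})^{[1]}/\la^{(i+1)}|}$ into $t^{n(\mu)}$ and $(-1)^{|\la|+wid(\underline{\la})}$—supplies precisely the bookkeeping the paper leaves implicit.
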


We remark that for the case of $q=0$, we have a closed form for $Q_{\mu/\tau}(\frac{1}{1-t};q,t)$ (see \cite[Theor. 3.1]{Lasc} or \cite[(4.3)]{WZ})
\begin{align}
Q_{\mu/\tau}(\frac{1}{1-t};t)=sk_{\mu/\tau}(t):=t^{n(\mu/\tau)}\prod_{j\geq1}
\left[\begin{matrix}\mu'_j-\tau_{j+1}'\\m_{j}(\tau)\end{matrix}\right]_t,
\end{align}
by which an explicit iterative formula for the Kostka polynomial $K_{\la \mu}(t)$ is derived:
\begin{cor}
(i) As a special case of Theorem \ref{t:q,t-Kite}, we have
\begin{align}\label{e:Kite}
K_{\la \mu}(t)=\sum_{\rho,\tau}(-1)^{|\la^{[1]}/\rho|}t^{n(\mu/\tau)}\prod_{j\geq1}
\left[\begin{matrix}\mu'_j-\tau_{j+1}'\\\tau'_j-\tau'_{j+1}\end{matrix}\right]_tK_{\rho\tau}(t)
\end{align}
summed over all partitions $\rho$ and $\tau$ such that $\rho\subset\la^{[1]}$, $\tau\subset\mu$ and $\la^{[1]}/\rho$ is a vertical strip.

(ii) Explicitly, for two partitions $\la, \mu$ of equal weight
\begin{equation}\label{e:iterK}
K_{\la \mu}(t)=(-1)^{|\la|}\sum_{\underline{\la}, \underline{\mu}}(-1)^{wid(\underline{\la})}t^{\sum_{i\geq 0}n(\mu^{(i)}/\mu^{(i+1)})}\prod_{i\geq 0}\prod_{j\geq1}
\left[\begin{matrix}\mu^{(i)'}_j-\mu^{(i+1)'}_{j+1}\\\mu^{(i+1)'}_j-\mu^{(i+1)'}_{j+1}\end{matrix}\right]_t
\end{equation}
summed over sequences of partitions
$\underline{\la}: \la=\la^{(0)}\sqsupset \la^{(1)} \sqsupset \cdots\la^{(r)}=\emptyset$ and $\underline{\mu}: \mu=\mu^{(0)}\supset \mu^{(1)} \supset \cdots\mu^{(r)}=\emptyset$ ($1\leq r\leq l(\la)$) such that $(\la^{(i)})^{[1]}/\la^{(i+1)}$ is a vertical strip and $|\la^{(i)}|=|\mu^{(i)}|$, $i=1,2,\cdots r-1$.
\end{cor}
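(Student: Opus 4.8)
The plan is to derive both formulae from Theorem~\ref{t:q,t-Kite}: part~(i) by the specialization $q=0$, and part~(ii) by iterating part~(i).

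For part~(i), I would put $q=0$ in \eqref{e:q,t-Kite}. The prefactor simplifies because $c'_{\la}(q,t)=\prod_{s\in\la}(1-q^{a(s)+1}t^{l(s)})$ satisfies $c'_{\la}(0,t)=1$ for every $\la$, each exponent $a(s)+1$ being at least $1$; hence $c'_{\mu}(q,t)/c'_{\tau}(q,t)\to 1$. Combining this with the specialization $K_{\la\mu}(0,t)=K_{\la\mu}(t)$ and with the closed form $Q_{\mu/\tau}(\frac{1}{1-t};t)=sk_{\mu/\tau}(t)$ recalled just above (references \cite{Lasc,WZ}), the recursion \eqref{e:q,t-Kite} collapses to \eqref{e:Kite}; here one only has to rewrite the lower entry of the $t$-binomial coefficient using the identity $m_j(\tau)=\tau'_j-\tau'_{j+1}$.

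For part~(ii), I would iterate \eqref{e:Kite}. At each application the upper partition moves from $\la$ to some $\rho\subset\la^{[1]}$, so its length drops by at least one; the process therefore halts after $r\le l(\la)$ steps at the initial value $K_{\emptyset\emptyset}(t)=1$. A single run of the recursion is encoded by the nested sequences $\underline{\la}$ and $\underline{\mu}$ of the statement, with $(\rho,\tau)=(\la^{(i+1)},\mu^{(i+1)})$ at the $i$-th step; the constraints $|\la^{(i)}|=|\mu^{(i)}|$ are precisely those that keep each intermediate $K_{\la^{(i)}\mu^{(i)}}(t)$ well defined. Multiplying the per-step weights of \eqref{e:Kite} along such a run reproduces, term by term, the $t$-power $t^{\sum_{i\ge0}n(\mu^{(i)}/\mu^{(i+1)})}$ and the product of $t$-binomials appearing in \eqref{e:iterK}, while summing over all admissible runs gives the full sum.

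The only delicate point is the accumulated sign. The $i$-th step contributes $(-1)^{|(\la^{(i)})^{[1]}/\la^{(i+1)}|}$, so the total sign is $(-1)^S$ with $S=\sum_{i=0}^{r-1}|(\la^{(i)})^{[1]}/\la^{(i+1)}|$. Writing $|(\la^{(i)})^{[1]}/\la^{(i+1)}|=|\la^{(i)}|-\la^{(i)}_1-|\la^{(i+1)}|$ and telescoping $\sum_i(|\la^{(i)}|-|\la^{(i+1)}|)=|\la|$ gives $S=|\la|-\sum_{i}\la^{(i)}_1=|\la|-wid(\underline{\la})$, so that $(-1)^S=(-1)^{|\la|}(-1)^{wid(\underline{\la})}$, matching the prefactor in \eqref{e:iterK}. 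I expect this telescoping bookkeeping to be the main thing to get right; everything else is a direct unwinding of the already-established recursion.
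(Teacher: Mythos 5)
Your proposal is correct and follows exactly the route the paper intends: part (i) is the $q=0$ specialization of Theorem \ref{t:q,t-Kite}, using $c'_{\la}(0,t)=1$, the closed form $Q_{\mu/\tau}(\frac{1}{1-t};t)=sk_{\mu/\tau}(t)$, and $m_j(\tau)=\tau'_j-\tau'_{j+1}$; part (ii) is the straightforward iteration, with the telescoping sign computation $\sum_i|(\la^{(i)})^{[1]}/\la^{(i+1)}|=|\la|-wid(\underline{\la})$ matching the prefactor $(-1)^{|\la|}(-1)^{wid(\underline{\la})}$, exactly as in the paper's parallel Corollary \ref{t:Kgeneral}. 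The paper leaves these steps implicit, so your write-up simply supplies the details of the same argument.
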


In \cite[Corollary 2.8]{BJ}, Bryan and the first author expressed $K_{\la \mu}(t)$ in terms of those with lower partitions of smaller lengths. 
Whereas our \eqref{e:Kite} involves those indexed by upper partitions with smaller lengths.  Kirillov and Reshetikhin \cite{KR} also gave a
formula for $K_{\la\mu}(t)$ in terms of $t$-Gaussian numbers and Bethe Ansatz, which looks similar but different from our formula \eqref{e:iterK}.

\subsection{The second iteration}

\begin{thm}\label{t:q,t-Kite2}
Let $\la$ and $\mu$ be two partitions with the same weight, $l(\mu)=l+1$, then
\begin{align}
K_{\la\mu}(q,t)=\sum_{\theta\in\mathbb{N}^l}\sum_{\rho\vdash|\la|-\mu_{l+1}+|\theta|}\frac{c_{\mu}'(q,t)}{c_{\tau}'(q,t)}
C_{\theta_1,\theta_2,\cdots,\theta_l}^{(q,t)}(u_1,u_2,\cdots,u_l)s_{\la/\rho}(\frac{1}{1-q})K_{\rho,\mu^{[1]}+\theta}(q,t),
\end{align}
where $u_i=q^{\mu_i-\mu_{l+1}}t^{l-i}$ and $C_{\theta_1,\theta_2,\cdots,\theta_l}^{(q,t)}(u_1,u_2,\cdots,u_l)$ is defined in \eqref{e:C}. $s_{\la/\rho}(\frac{1}{1-q})$ is computed explicitly in \eqref{e:first} and \eqref{e:second}.
\end{thm}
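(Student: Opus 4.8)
The plan is to mirror the proof of Theorem~\ref{t:q,t-Kite}, but to expand the \emph{lower} index $Q_\mu$ by the Lassalle--Schlosser inversion \eqref{e:schlosser} instead of expanding the upper Schur index; this is exactly what lowers $l(\mu)$ from $l+1$ to $l$. I begin from the inner-product form of the Kostka polynomial in \eqref{e:Kinner},
\[
K_{\la\mu}(q,t)=c'_\mu(q,t)\,\bigl\langle Q_\mu(X;q,t),\,s_\la(X)\bigr\rangle_t,
\]
and substitute \eqref{e:schlosser} with $n=l$ (legitimate since $l(\mu)=l+1$), which writes $Q_\mu$ as a sum over $\theta\in\mathbb{N}^l$ of $C^{(q,t)}_{\theta_1,\dots,\theta_l}(u_1,\dots,u_l)\,g_{\mu_{l+1}-|\theta|}(X;q,t)\,Q_{\mu^{[1]}+\theta}(X;q,t)$, where $u_i=q^{\mu_i-\mu_{l+1}}t^{l-i}$ and $\mu^{[1]}+\theta=(\mu_1+\theta_1,\dots,\mu_l+\theta_l)$ is the composition built from the first $l$ parts of $\mu$. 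The sum is effectively finite because $g_m=Q_{(m)}$ vanishes for $m<0$ and $Q_\nu=0$ whenever $\nu$ fails to be a partition.

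The central step is to strip off the one-row factor $g_{\mu_{l+1}-|\theta|}(X;q,t)$ by moving it across the pairing. Since the bilinear form here is the Hall--Littlewood form $\langle\,\cdot\,,\cdot\,\rangle_t$, the adjoint of multiplication by $g_k(X;q,t)$ is $g^*_k(X;q,t)=g^\bot_k\bigl(\tfrac{X}{1-q};0,0\bigr)$, the operator identified just before Theorem~\ref{t:q,t-Kite}; its action on the Schur basis is supplied by \eqref{e:g*Q2}. Hence
\[
\bigl\langle g_{\mu_{l+1}-|\theta|}\,Q_{\mu^{[1]}+\theta},\,s_\la\bigr\rangle_t
=\bigl\langle Q_{\mu^{[1]}+\theta},\,g^*_{\mu_{l+1}-|\theta|}s_\la\bigr\rangle_t
=\sum_{\rho}s_{\la/\rho}\!\Bigl(\tfrac{1}{1-q}\Bigr)\,\bigl\langle Q_{\mu^{[1]}+\theta},\,s_\rho\bigr\rangle_t .
\]
The degree of $g^*_k$ forces $|\rho|=|\la|-(\mu_{l+1}-|\theta|)=|\la|-\mu_{l+1}+|\theta|$, which is precisely the stated range $\rho\vdash|\la|-\mu_{l+1}+|\theta|$; terms with $\rho\not\subset\la$ drop out automatically since $s_{\la/\rho}=0$ there.

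To finish, I re-express the surviving pairing once more through \eqref{e:Kinner}, namely $\langle Q_{\mu^{[1]}+\theta},s_\rho\rangle_t=K_{\rho,\,\mu^{[1]}+\theta}(q,t)/c'_{\mu^{[1]}+\theta}(q,t)$, and collect constants; writing $\tau=\mu^{[1]}+\theta$ for the lower index produces the ratio $c'_\mu/c'_\tau$ and yields the asserted identity. The argument is purely linear once its two ingredients are in hand, so the only genuine obstacle is bookkeeping rather than a new idea: one must (i) take the adjoint with respect to $\langle\,\cdot\,,\cdot\,\rangle_t$ and therefore use $g^*_k=g^\bot_k(\tfrac{X}{1-q};0,0)$ via \eqref{e:g*Q2}, rather than the Macdonald $\bot$-adjoint used in the first iteration, and (ii) keep in mind that $g_{\mu_{l+1}-|\theta|}$ is the \emph{Macdonald} one-row function $Q_{(\mu_{l+1}-|\theta|)}$, so that its $\langle\,\cdot\,,\cdot\,\rangle_t$-adjoint lands correctly on $s_\la$. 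The remaining specialization $s_{\la/\rho}(\tfrac{1}{1-q})$ is then given in closed form by \eqref{e:first} and \eqref{e:second}, and no convergence or positivity issue arises because every sum is finite.
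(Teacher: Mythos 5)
Your proposal is correct and follows essentially the same route as the paper's own proof: expand $Q_\mu$ via the Lassalle--Schlosser inversion \eqref{e:schlosser}, pass the one-row factor across the Hall--Littlewood pairing using $g^*_k(X;q,t)=g^\bot_k\bigl(\tfrac{X}{1-q};0,0\bigr)$ together with \eqref{e:g*Q2}, and then re-identify the resulting pairings as Kostka polynomials via \eqref{e:Kinner}. Your explicit identification $\tau=\mu^{[1]}+\theta$ even clarifies a notational point the paper leaves implicit.
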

\begin{proof}
It follows from direct calculation that
\begin{align*}
&K_{\la \mu}(q,t)\\
=&c'_{\mu}(q,t)\left\langle Q_{\mu}(X;q,t), s_{\la}(X) \right\rangle_t \quad \text{(by \eqref{e:Kinner})}\\
=&c'_{\mu}(q,t)\left\langle \sum_{\theta\in\mathbb{N}^l}C_{\theta_1,\theta_2,\cdots,\theta_l}^{(q,t)}(u_1,u_2,\cdots,u_l)
g_{\mu_{l+1}-|\theta|}(X;q,t)Q_{(\mu_1+\theta_1,\ldots,\mu_l+\theta_l)}(X;q,t), s_{\la}(X) \right\rangle_t \quad \text{(by \eqref{e:schlosser})}\\
=&\sum_{\theta\in\mathbb{N}^l}c'_{\mu}(q,t)C_{\theta_1,\theta_2,\cdots,\theta_l}^{(q,t)}(u_1,u_2,\cdots,u_l)\left\langle
Q_{(\mu_1+\theta_1,\ldots,\mu_l+\theta_l)}(X;q,t), g^*_{\mu_{l+1}-|\theta|}(X;q,t)s_{\la}(X) \right\rangle_t.
\end{align*}
Note that $g^{*}_{k}(X;q,t)=g^{\bot}_{k}(\frac{X}{1-q};0,0)$. Combining this with \eqref{e:g*Q2} gives
\begin{align*}
&K_{\la \mu}(q,t)\\
=&\sum_{\theta\in\mathbb{N}^l}c'_{\mu}(q,t)C_{\theta_1,\theta_2,\cdots,\theta_l}^{(q,t)}(u_1,u_2,\cdots,u_l)\left\langle
Q_{(\mu_1+\theta_1,\ldots,\mu_l+\theta_l)}(X;q,t), \sum_{\rho\vdash |\la|-\mu_{l+1}+|\theta|}s_{\la/\rho}(\frac{1}{1-q})s_{\rho}(X) \right\rangle_t\\
=&\sum_{\theta\in\mathbb{N}^l}\sum_{\rho\vdash|\la|-\mu_{l+1}+|\theta|}\frac{c_{\mu}'(q,t)}{c_{\tau}'(q,t)}
C_{\theta_1,\theta_2,\cdots,\theta_l}^{(q,t)}(u_1,u_2,\cdots,u_l)s_{\la/\rho}(\frac{1}{1-q})K_{\rho,\mu^{[1]}+\theta}(q,t).
\end{align*}
\end{proof}
Now we present some formulas for $s_{\la/\rho}(\frac{1}{1-q})$ for completeness of Theorem \ref{t:q,t-Kite2}. 
Let $T$ be a {\it standard Young tableaux} (written as SYT in short) with $n$ boxes (see \cite[p. 5]{Mac} for explicit definition). We say that
$1\leq i\leq n$ is a descent of $T$ if $i+1$ appears in a 
lower row than
that of $i$. The major index $maj(T)$ of $T$ is the sum of all descents
of $T$. The {\it fake-degree polynomial} $f^{\la/\rho}(q)$ associated with a skew Young diagram $\la/\rho$ is defined as the generating function of $maj(T)$:
\begin{align*}
f^{\la/\rho}(q)=\sum_{T} q^{maj(T)}
\end{align*}
summed over all SYTs of skew shape $\la/\rho$.

Let $\la/\rho$ be a skew shape with $n$ boxes. Then the first formula of $s_{\la/\mu}(\frac1{1-q})$ says that \cite[Prop. 7.19.11]{St}
\begin{align}\label{e:first}
s_{\la/\rho}(\frac{1}{1-q})=\frac{f^{\la/\rho}(q)}{(1-q)(1-q^2)\cdots(1-q^n)}.
\end{align}

In particular, for $\la=(n)$, $\rho=\emptyset$, we have $maj(T)=0$ for any SYT of shape $(n)$. Thus, $s_{(n)}(\frac{1}{1-q})=\frac{1}{(1-q)(1-q^2)\cdots(1-q^n)}=\frac{1}{(q;q)_n}$.

Using the Jacobi-Trudi formula for skew Schur functions \cite[p.70, (5.4)]{Mac}:
\begin{align*}
s_{\la/\rho}=\det_{1\leq i,j\leq l(\la)}\left(s_{(\la_i-\rho_j-i+j)}\right),
\end{align*}
we have the second formula
\begin{align}\label{e:second}
s_{\la/\rho}(\frac{1}{1-q})=\det_{1\leq i,j\leq l(\la)}\left(\frac{1}{(q;q)_{\la_i-\rho_j-i+j}}\right).
\end{align}

\vskip30pt \centerline{\bf Acknowledgments}
We thank Michael J. Schlosser for helpful discussions on the inversion of the Pieri formula for Macdonald polynomials. The paper is partially supported by Simons Foundation grant
MP-TSM-00002518
and NSFC grant No. 12171303. The second author is supported by China Scholarship Council.
\bigskip

\bibliographystyle{plain}

\end{document}